\newif\ifams\amstrue
\newif\ifdebug\debugtrue
\newcommand\thetitle{Discreteness of spectrum for the \texorpdfstring{$\dbar$}{d-bar}-Neumann Laplacian on manifolds of bounded geometry}
\newcommand\theshorttitle{The \texorpdfstring{$\dbar$}{d-bar}-Neumann problem on manifolds of bounded geometry}
\newcommand\theauthor{Franz Berger}
\newcommand\theemail{franz.berger2@univie.ac.at}
\newcommand\theaddress{Fakult\"at f\"ur Mathematik, Universit\"at Wien, Oskar-Morgenstern-Platz 1, A-1090 Wien, Austria}
\newcommand\thethanks{This work was supported by the Austrian Science Fund (FWF): P28154.}
\newcommand{\sectionnumdepth}{1}
\newcommand{\subsectionnumdepth}{2}
\numberwithin{equation}{section}
\numberwithin{figure}{section}
\declaretheorem[name=Theorem,style=plain,numberwithin=section]{thm}
\declaretheorem[name=Theorem,style=plain,numbered=no]{thm*}
\declaretheorem[name=Proposition,style=plain,sibling=thm]{prop}
\declaretheorem[name=Lemma,style=plain,sibling=thm]{lem}
\declaretheorem[name=Corollary,style=plain,sibling=thm]{cor}
\declaretheorem[name=Example,style=definition,sibling=thm,
qed={$\lozenge$}
]{example}
\declaretheorem[name=Definition,style=definition,sibling=thm]{defn}
\declaretheorem[name=Remark,style=remark,sibling=thm,
qed={$\lozenge$}
]{rem}
\declaretheorem[name=Remark,style=remark,numbered=no]{rem*}
\declaretheorem[name=Theorem,style=plain,numbered=yes]{thmx}
\declaretheorem[name=Corollary,style=plain,sibling=thmx]{corx}
\crefname{thm}{Theorem}{Theorems}
\crefname{thmx}{Theorem}{Theorems}
\crefname{prop}{Proposition}{Propositions}
\crefname{lem}{Lemma}{Lemmas}
\crefname{cor}{Corollary}{Corollaries}
\crefname{corx}{Corollary}{Corollaries}
\crefname{example}{Example}{Examples}
\crefname{defn}{Definition}{Definitions}
\crefname{rem}{Remark}{Remarks}
\crefname{claim}{Claim}{Claims}
\crefname{assum}{Assumption}{Assumptions}
\crefname{enumi}{}{}
\crefname{enumii}{}{}
\crefname{enumiii}{}{}
\crefname{equation}{}{}
\newcommand{\enumlabelformat}{\roman}
\newcommand{\enumlabelfont}[1]{#1}
\newlength{\thelabelsep}
\setlist{labelsep=\thelabelsep}
\setlist[enumerate]{font=\enumlabelfont,label=(\enumlabelformat*),leftmargin=2.5em}
\setlist[itemize]{leftmargin=2.5em,label=$-$}
\newcounter{inlineenum}
\renewcommand{\theinlineenum}{\enumlabelformat{inlineenum}}
\newenvironment{inlineenum}
  {\setcounter{inlineenum}{0}   \renewcommand{\item}{\refstepcounter{inlineenum}{(\theinlineenum)\hspace{\thelabelsep}}}
  }
  {\ignorespacesafterend}
\newcommand\numberthis{\addtocounter{equation}{1}\tag{\theequation}}
\newcommand{\overbar}[1]{\mkern 1mu\overline{\mkern-1mu#1\mkern-1mu}\mkern 1mu} \newcommand{\ol}[1]{\overline{#1}}
\newcommand{\R}{\mathbb{R}}
\newcommand{\Cplx}{\mathbb{C}}
\newcommand{\Cplxi}{i}
\newcommand{\N}{\mathbb{N}}
\newcommand{\bsecsymbol}{\Gamma}
\newcommand{\bsec}{\bsecsymbol}
\newcommand{\bsecc}{\bsecsymbol_c}
\newcommand{\cc}{{cc}}
\newcommand{\bseccc}{\bsecsymbol_{\cc}}
\newcommand{\interior}[1]{{#1}^\circ} 
\newcommand{\fadj}{t}
\newcommand{\arghere}{{\_}}
 \newcommand{\id}[1]{\operatorname{id}_{#1}} \newcommand{\img}[1]{\operatorname{img}(#1)} \newcommand{\domsymb}{\operatorname{dom}}
\newcommand{\dom}[1]{\domsymb(#1)} \newcommand{\graph}[1]{\operatorname{Graph}(#1)}  \DeclareMathOperator{\dimsymb}{dim}
\renewcommand{\dim}[2][]{\dimsymb_{#1}(#2)} \newcommand{\rank}[1]{\operatorname{rank}({#1})} \newcommand{\dist}[3][]{\operatorname{dist}_{#1}(#2,#3)} 
\newcommand{\trace}{\operatorname{tr}}
\newcommand{\re}{\operatorname{Re}}   
  \newcommand{\spec}[2][]{{\sigma_{#1}(#2)}} \newcommand{\essspec}[1]{{\sigma_e(#1)}}    
 \newcommand{\bdopsymb}{\mathscr L}
\newcommand{\bdop}[1]{{\bdopsymb(#1)}}
\newcommand{\dbar}{{\smash{\overbar{\partial}}}}  
\NewDocumentCommand{\dbarN}     { O{} O{} }{N_{#1}^{#2}}       \NewDocumentCommand{\clapl} { O{} O{} }{\square_{#1}^{#2}} \NewDocumentCommand{\minsol}{ O{} O{} }{S_{#1}^{#2}}
\newcommand{\hstar}[1][]{\smash{\ol\star}^{#1}} \newcommand{\opwedge}[1]{\mathbin{\wedge_{#1}}}
\newcommand{\evwedge}{\opwedge{\mathrm{ev}}}
 \newcommand{\supp}{\operatorname{supp}}     
  \newcommand{\en}[1]{\operatorname{End}(#1)} \newcommand{\ins}[1]{\operatorname{\mathnormal{i}}_{#1}} \newcommand{\extpsymb}{\operatorname{\varepsilon}}
\newcommand{\extp}[1]{\extpsymb(#1)}  \newcommand{\symb}[2][]{\operatorname{Symb}_{#1}({#2})}
\newcommand{\nakanoc}[2][]{\operatorname{Nak}_{#1}(#2)}
\newcommand{\minn}{s} \newcommand{\maxx}{w} 
\newcommand{\loc}{\mathrm{loc}}
\newcommand{\rinj}[1]{{r_{\mathrm{inj}}(#1)}}
\DeclareMathOperator*{\psum}{ \mathchoice
  {\sideset{}{^{\,\prime}}{\sum}}
  {\sum^{\prime}\!\!}
  {\sum^{\prime}\!\!}
  {\sum^{\prime}\!\!}
}
\newlength{\hilbcomparrow}
\DeclareFontFamily{OMX}{MnSymbolE}{}
\DeclareSymbolFont{MnLargeSymbols}{OMX}{MnSymbolE}{m}{n}
\DeclareFontShape{OMX}{MnSymbolE}{m}{n}{
    <-6>  MnSymbolE5
   <6-7>  MnSymbolE6
   <7-8>  MnSymbolE7
   <8-9>  MnSymbolE8
   <9-10> MnSymbolE9
  <10-12> MnSymbolE10
  <12->   MnSymbolE12
}{}
\DeclareFontShape{OMX}{MnSymbolE}{b}{n}{
    <-6>  MnSymbolE-Bold5
   <6-7>  MnSymbolE-Bold6
   <7-8>  MnSymbolE-Bold7
   <8-9>  MnSymbolE-Bold8
   <9-10> MnSymbolE-Bold9
  <10-12> MnSymbolE-Bold10
  <12->   MnSymbolE-Bold12
}{}
\let\llangle\@undefined
\let\rrangle\@undefined
\DeclareMathDelimiter{\llangle}{\mathopen}                     {MnLargeSymbols}{'164}{MnLargeSymbols}{'164}
\DeclareMathDelimiter{\rrangle}{\mathclose}                     {MnLargeSymbols}{'171}{MnLargeSymbols}{'171}
\let\defemph\emph
\newcommand{\volform}[1]{{\operatorname{vol}_{#1}}}
\newcommand{\volg}{\volform{g}}
\renewcommand{\ins}[1]{\operatorname{ins}_{#1}}
\renewcommand{\fadj}{\dagger}
\begin{document}

\ifams\else
\bibliographystyle{abbrvnat}
\fi

\ifams\else
\begin{frontmatter}
 \fi
 
 \ifams
 \title[\theshorttitle]{\thetitle}
 \author{\theauthor}
 \address{\theaddress}
 \email{\href{mailto:\theemail}{\theemail}}
 \thanks{\thethanks}
 \else
 \title\thetitle  \author{Franz Berger}  \ead{\theemail}
 \address{\theaddress}
 \journal{\dots}
 \fi

 \begin{abstract}
  For a Hermitian holomorphic vector bundle over a Hermitian manifold, we consider the Dolbeault Laplacian with $\dbar$-Neumann boundary conditions, which is a self-adjoint operator on the space of square-integrable differential forms with values in the given holomorphic bundle.
  We argue that some known results on the spectral properties of this operator on pseudoconvex domains in $\mathbb C^n$ continue to hold on K\"ahler manifolds satisfying certain bounded geometry assumptions.
  In particular, we will consider the Dolbeault complex for forms with values in a line bundle, where known results from magnetic Schr\"odinger operator theory can be applied.
 \end{abstract}

 \ifams\else
\end{frontmatter}
\fi

\ifams
\maketitle
\fi
\tableofcontents

\section{Introduction and overview of results}

Let $X$ be a complex manifold, and suppose that $M \subseteq X$ is the closure of a smoothly bounded open subset $\interior M$ of $X$, with (possibly empty) boundary $\partial M$.
Let $E \to M$ be a holomorphic vector bundle, meaning that $E$ is defined in some open neighborhood of $M$ and holomorphic on this neighborhood.
For $0 \leq p \leq n$, the \defemph{Dolbeault complex}
\begin{equation}
 \label{eq:dolbeault_complex}
 0 \to \Omega^{p,0}(M,E) \xrightarrow{\dbar^E} \Omega^{p,1}(M,E) \xrightarrow{\dbar^E} \dotsb \xrightarrow{\dbar^E} \Omega^{p,n}(M,E) \to 0,
\end{equation}
with $\Omega^{p,q}(M,E)$ the space of smooth $E$-valued $(p,q)$-forms, generalizes the Wirtinger derivative $\tfrac{d}{d\ol z}$ from single variable complex analysis.
Choosing Hermitian metrics on $X$ and on $E$ gives, in the spirit of Hodge theory, rise to the corresponding Dolbeault Laplacian $\dbar^E\dbar^{E,\fadj} + \dbar^{E,\fadj}\dbar^E$, with $\dbar^{E,\fadj}$ denoting the formal adjoint.
The \defemph{Dolbeault Laplacian with $\dbar$-Neumann boundary conditions} is the self-adjoint operator
\begin{equation*}
 \square^E \coloneqq \dbar^E_\maxx \dbar^{E,*}_\maxx + \dbar^{E,*}_\maxx \dbar^E_\maxx
\end{equation*}
on $L^2_{\bullet,\bullet}(M,E)$, where $\dbar^E_\maxx$ is the weak extension of $\dbar^E$, see \cref{sec:strong_weak_extensions}, and  $\dbar^{E,*}_\maxx$ is the Hilbert space adjoint of $\dbar^E_\maxx$.
The associated boundary value problem is the so-called \defemph{$\dbar^E$-Neumann problem}.
Some more information on $\square^E$ is provided in \cref{sec:more_on_dbar}.
If we just consider $\Cplx$-valued forms (\ie $E$ is the trivial line bundle), then we omit the superscript $E$ and simply write $\square$.
We shall always assume that $M$ is complete for the chosen metric, because then the operators of interest will have cores consisting of smooth sections with compact support, see \cref{sec:essential_self-adjointness}.

The $\dbar$-Neumann problem is an important tool in the theory of several complex variables.
Its solution is used in arguments requiring the construction of holomorphic functions (or, more generally, sections of $E$) with prescribed properties.
In addition, there are spectral geometry type results for $\square^E$, at least in the case of some domains in $\Cplx^n$, which deduce geometric properties of the (boundary of) the domain in terms of the spectrum of the Laplacian, see \cite{Fu2008}.
For extensive surveys of the $\dbar$-Neumann problem, with a focus on bounded pseudoconvex domains in $\mathbb C^n$, see \cite{Straube2010,Chen2001}.

The goal of this article is to establish generalizations of a few facts concerning the discreteness of spectrum for $\square^E$ that were previously known in the setting of pseudoconvex domains in $\Cplx^n$ or in the ``weighted'' $\dbar$-Neumann problem on $\Cplx^n$, with plurisubharmonic weight function.

\subsection*{``Percolation'' of bounds on the essential spectrum}

One of the results of this paper is that, under certain pseudoconvexity assumptions on $\partial M$ and positivity of curvature requirements, the discreteness of spectrum of $\square^E$ ``percolates'' up the Dolbeault complex, in the sense that if $\square^E_{p,q}$ has discrete spectrum, then the same holds true for $\square^E_{p,q+1}$.
This property is well-known in the case of a bounded pseudoconvex domain $M$ in $\Cplx^n$, see \cite[Proposition~2.2]{Fu2008} or \cite[Proposition~4.5]{Straube2010}.
Moreover, this holds also for the ``weighted'' $\dbar$-problem on $\Cplx^n$, and where the weight is plurisubharmonic, see \cite{Haslinger2014}.
Here, by the ``weighted'' problem, we mean choosing $E$ to be the trivial line bundle on $\Cplx^n$, but with nontrivial Hermitian metric, so that there is $\varphi \colon \Cplx^n \to \R$ such that the $L^2$ norm becomes $\|f\|^2 = \int_{\Cplx^n} |f(z)|^2\,e^{-\varphi(z)}d\lambda(z)$ after identifying sections of $E$ with functions.
Here, $\lambda$ is Lebesgue measure.
For a general vector bundle, the condition of $\varphi$ being plurisubharmonic will have to be replaced by a curvature condition.

For domains in $\Cplx^n$, the proofs of the above rely on the fact that, if $\{w_j\}_{j=1}^n$ is a constant orthonormal frame field for $T^{0,1}M$, then the isometry $L^2_{p,q}(M,E) \to L^2_{p,q-1}(M,E)^{\oplus n}$ given by $u \mapsto \frac{1}{\sqrt q}(\ins{w_j}(u))_{j=1}^n$ satisfies $\sum_{j=1}^n Q^E_{p,q-1}(\ins{w_j}(u),\ins{w_j}(u)) \leq C Q^E_{p,q}(u,u)$, assuming the previously mentioned pseudoconvexity and curvature assumptions hold.
Here, $Q^E$ is the quadratic form associated to $\square^E$, and $\ins{w_j}$ is the insertion operator on (differential) forms.
If $M$ is a Hermitian manifold, we do not have global frames for $T^{0,1}M$ available, so we have to use local frames and patch the results together.
Moreover, the derivatives of the frame elements will have to be controlled.
This patching procedure works if $X$ is of \defemph{$1$-bounded geometry} (to be discussed in \cref{sec:bounded_geometry}), and we have the following result:

\begin{thmx}
 \label{spectral_percolation_specific}
 Let $X$ be K\"ahler and of $1$-bounded geometry, suppose $M$ is $q$-Levi pseudoconvex, and assume that $E \to M$ is $q$-Nakano lower semibounded.
 Then $\inf\essspec{\square^E_{p,q-1}} \leq 2\inf\essspec{\square^E_{p,q}} + C$, with $C \geq 0$ depending on $p$, $q$, $n$, and on the geometries of $M$ and $E$.
 In particular, if $\square^E_{p,q-1}$ has discrete spectrum, then so does $\square^E_{p,q}$.
\end{thmx}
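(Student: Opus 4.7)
The plan is to combine Persson's characterization of $\inf\essspec{\cdot}$ with a localized version of the standard $\Cplx^n$ argument sketched in the introduction. Since $M$ is complete, compactly supported smooth forms constitute a form core for $\square^E$, and Persson's principle yields
\[
 \inf\essspec{\square^E_{p,q}} = \sup_{K \Subset M}\, \inf\!\Bigl\{\tfrac{Q^E_{p,q}(u,u)}{\|u\|^2} : u \in \Omega^{p,q}(M,E),\ \supp u \subseteq M \setminus K,\ u \neq 0\Bigr\},
\]
where $Q^E_{p,q}$ denotes the quadratic form of $\square^E_{p,q}$. It therefore suffices to construct, for each such $u$, a compactly supported smooth $(p,q-1)$-form $v$ with $\supp v \subseteq \supp u$ satisfying $Q^E_{p,q-1}(v,v)/\|v\|^2 \leq 2\, Q^E_{p,q}(u,u)/\|u\|^2 + C$ with $C$ independent of $u$.

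For the construction, $1$-bounded geometry provides a uniformly locally finite cover of $X$ by normal-coordinate balls $\{B_\alpha\}$ of fixed small radius, a subordinate partition of unity $\{\chi_\alpha^2\}$ with $\sum_\alpha \chi_\alpha^2 \equiv 1$ and uniformly bounded first derivatives, and on each $B_\alpha$ an orthonormal frame $\{w_j^\alpha\}_{j=1}^n$ of $T^{0,1}X$ whose Christoffel-type coefficients in normal coordinates are uniformly controlled. Setting
\[
 v_{\alpha,j} \coloneqq \tfrac{1}{\sqrt q}\,\chi_\alpha\,\ins{w_j^\alpha} u,
\]
the pointwise identity $\sum_j |\ins{w_j^\alpha} u|^2 = q\,|u|^2$ combined with $\sum_\alpha \chi_\alpha^2 = 1$ yields $\sum_{\alpha,j} \|v_{\alpha,j}\|^2 = \|u\|^2$. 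The key form-level estimate
\[
 \sum_{\alpha,j} Q^E_{p,q-1}(v_{\alpha,j},v_{\alpha,j}) \,\leq\, 2\,Q^E_{p,q}(u,u) + C\,\|u\|^2
\]
is established chartwise from the Bochner\--Kodaira\--Nakano identity: the K\"ahler hypothesis removes torsion corrections, $q$-Levi pseudoconvexity controls the boundary integral, and $q$-Nakano lower semiboundedness of $E$ furnishes a one-sided curvature bound. The factor $2$ arises by applying $(a+b)^2 \leq 2a^2+2b^2$ to decompositions of the form $\dbar^E(\chi_\alpha \ins{w_j^\alpha} u) = \chi_\alpha \ins{w_j^\alpha} \dbar^E u + [\dbar^E,\chi_\alpha \ins{w_j^\alpha}]u$ and their adjoint counterparts; the additive $C\|u\|^2$ absorbs contributions from $d\chi_\alpha$, from the first derivatives of $w_j^\alpha$, and from the curvature lower bound, each uniformly controlled by the bounded-geometry hypothesis. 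Picking the index $(\alpha,j)$ that minimizes the Rayleigh quotient of $v_{\alpha,j}$ then produces the required $v$.

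The main obstacle is precisely the careful bookkeeping of the error terms in this localization. In $\Cplx^n$ one may take a global constant frame and no partition of unity, and the resulting inequality has no additive term; here both the partition of unity and the replacement of the global frame by local frames $\{w_j^\alpha\}$ introduce first-order commutator contributions, the latter being the more delicate since $[\dbar^E, \ins{w_j^\alpha}]$ involves the intrinsic variation $\nabla w_j^\alpha$ rather than just derivatives of a cutoff. The $1$-bounded geometry hypothesis is tailored exactly to ensure that all such first-order terms are uniformly bounded across the cover, so that the chartwise estimates assemble into a global estimate with a single constant $C$. Once this form-level inequality is in place, the spectral inequality follows by Persson, and discreteness at level $(p,q-1)$, i.e.\ $\inf\essspec{\square^E_{p,q-1}} = +\infty$, forces $\inf\essspec{\square^E_{p,q}} = +\infty$, giving discreteness at $(p,q)$.
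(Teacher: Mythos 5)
Your overall strategy---Persson's characterization of $\inf\essspec{\cdot}$, a bounded\-/geometry cover with uniformly controlled partition of unity and local frames, and the Bochner--Kodaira--Nakano identity combined with $q$-Levi pseudoconvexity and the Nakano bound---is the same as the paper's. The genuine gap is in the mechanism you give for the key form-level estimate. You propose to write $\dbar^E(\chi_\alpha \ins{w_j^\alpha} u) = \chi_\alpha \ins{w_j^\alpha}\dbar^E u + [\dbar^E, \chi_\alpha \ins{w_j^\alpha}]u$ (and the adjoint analogue) and to absorb the commutator contributions into $C\|u\|^2$. This step fails: while $[\dbar^E,\chi_\alpha]$ is of order zero, the commutator $[\dbar^E,\ins{w_j^\alpha}]$ is a genuine \emph{first-order} differential operator, because the principal symbol $\extp{\gamma^{0,1}}$ of $\dbar^E$ neither commutes nor anticommutes with the insertion operator: the anticommutator of $\extp{\gamma^{0,1}}$ with $\ins{w_j^\alpha}$ equals $\gamma^{0,1}(w_j^\alpha)$ times the identity, which is nonzero for generic $\gamma$. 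Concretely, $\dbar^E \ins{w_j^\alpha} + \ins{w_j^\alpha}\dbar^E$ contains the covariant derivative in the direction $w_j^\alpha$, so $\|[\dbar^E,\chi_\alpha\ins{w_j^\alpha}]u\|$ is of the same order as $Q^E(u,u)^{1/2}$ itself and cannot be dominated by $C\|u\|$; your additive constant would not close.

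This is exactly why BKN is not a bookkeeping device here but the essential step. One first uses \cref{bkn_boundary_localized} to convert $Q^E(\chi_\alpha\ins{w_j^\alpha}u,\chi_\alpha\ins{w_j^\alpha}u)$ into $\|d_{1,0}^{E\otimes\Lambda^{n-p,0}TM,\fadj}(\cdot)\|^2$ plus curvature and boundary terms, and only \emph{then} commutes the insertion past $d_{1,0}^{E,\fadj}$: the relation $\ins{X}\circ d_{1,0}^{E,\fadj} = \extp{\partial(X^\flat)}^\fadj - d_{1,0}^{E,\fadj}\circ\ins{X}$ has a \emph{zeroth-order} remainder $\extp{\partial(X^\flat)}^\fadj$, controlled by $|\nabla \ol w_j^\alpha|$ and hence uniformly by $1$-bounded geometry (\cref{percolation_derivative_estimate,spectral_percolation_localized}); this is where the factor $2$ actually comes from. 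The cutoffs are then handled exactly, with no factor $2$, by the first-order IMS formula applied to the Dirac operator $\sqrt{2}(\dbar^E+\dbar^{E,\fadj})$. Two smaller points: on a manifold with boundary the form core is not $\Omega_c^{p,q}(M,E)$ but the space $B_{\partial M}^{p,q}(M,E)$ of forms satisfying the $\dbar$-Neumann boundary condition (\cref{form_core_for_square}), and one must check that this condition is preserved by $\ins{w_j^\alpha}$ (it is, since insertion operators anticommute) so that BKN and the pseudoconvexity inequality may legitimately be applied to the localized forms.
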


The notion of $q$-Levi pseudoconvexity will be discussed in \cref{sec:complex_geometry}, and a Hermitian holomorphic vector bundle $E \to M$ is called \defemph{$q$-Nakano lower semibounded} (with $q \geq 1$) if there is $c \in \R$ such that
\begin{equation}
 \label{eq:nakano_lower_semibounded}
 \sum_{j,k=1}^n \langle R^E(w_j,\ol w_k) \ins{\ol w_j}(u), \ins{\ol w_k}(u) \rangle \geq c |u|^2
\end{equation}
holds for all $u \in \Lambda^{0,q}T^*_xM \otimes E_x$ and all $x \in M$, and where $R^E$ is the curvature of the Chern connection on $E$.
Note that if \cref{eq:nakano_lower_semibounded} holds on $\Lambda^{0,q}T^*M \otimes E$, then it is also true on $\Lambda^{p,q}T^*M \otimes E$ for $0 \leq p \leq n$.
The largest possible constant $c$ in \cref{eq:nakano_lower_semibounded} is denoted by $\nakanoc[q]{E}$.
It is easy to see that $E$ is $1$-Nakano lower semibounded with $\nakanoc[1]{E} \geq 0$ if and only it is \defemph{Nakano semipositive} in the sense of \cite{Nakano1955}: for every $x \in M$, we have
\begin{equation}
 \label{eq:nakano_semi_positive}
 \sum_{j,k,\alpha,\beta} \big\langle R^E\big(\tfrac{\partial}{\partial z_j},\tfrac{\partial}{\partial \ol z_k}\big) e_\alpha,e_\beta \big\rangle \, u_{j,\alpha} \ol{u_{k,\beta}} \geq 0
\end{equation}
for all $u = \sum_{j,\alpha} u_{j,\alpha} \, \tfrac{\partial}{\partial z_j} \otimes e_\alpha \in T^{1,0}_xM \otimes E_x$, where $(z_1,\dotsc,z_n)$ are holomorphic coordinates of $M$ around $x$ and $\{e_\alpha\}_{\alpha}$ is an orthonormal basis of $E_x$.
The weaker condition of \defemph{Griffiths semipositivity} requires $R^E$ to satisfy \cref{eq:nakano_semi_positive} only on simple tensors, \ie for $u$ of the form $\tfrac{\partial}{\partial z_j} \otimes e$.
For more examples and properties of (Nakano or Griffiths) positive vector bundles, we refer to textbooks on complex geometry, for instance \cite{Ohsawa2015,Demailly2012}.

If $M$ is a \emph{complete} Hermitian manifold (without boundary, so $X=M$ in the above notation), then \defemph{$L^2$ Serre duality} (see \cite{Chakrabarti2012} for a detailed account) says that the Hodge star operator
\[ \hstar[E] \colon \Lambda^{\bullet,\bullet}T^*M \otimes E \to \Lambda^{n-\bullet,n-\bullet}T^*M \otimes E^*, \]
defined by $\langle u,v \rangle\,\volg = u \evwedge \hstar[E]v$, satisfies $\hstar[E] \circ \square^E = \square^{E^*} \circ \hstar[E]$.
Using this, one immediately obtains a result similar to \cref{spectral_percolation_specific}:

\begin{corx}
 \label{spectral_percolation_serre}
 Let $M$ be a K\"ahler manifold of $1$-bounded geometry, and let $E \to M$ be a Hermitian holomorphic vector bundle such that $E^*$ is $(n-q)$-Nakano lower semibounded.
 If $\square^E_{p,q+1}$ has discrete spectrum, then so does $\square^E_{p,q}$.
\end{corx}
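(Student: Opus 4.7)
The strategy is to conjugate everything by the $L^2$ Serre duality isomorphism and then apply \cref{spectral_percolation_specific} to the dual bundle $E^*$.

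Since $M$ has empty boundary and is complete (as follows from $1$-bounded geometry), the Hodge star $\hstar[E]$ is a unitary isomorphism $L^2_{p,q}(M,E) \to L^2_{n-p,n-q}(M,E^*)$ with $\hstar[E] \circ \square^E = \square^{E^*} \circ \hstar[E]$, as recalled just above the statement. Consequently $\square^E_{p,q}$ and $\square^{E^*}_{n-p,n-q}$ are unitarily equivalent, so their essential spectra coincide and one has discrete spectrum precisely when the other does. The hypothesis that $\square^E_{p,q+1}$ has discrete spectrum thus translates into discreteness of $\square^{E^*}_{n-p,n-q-1}$, while the desired conclusion for $\square^E_{p,q}$ becomes discreteness of $\square^{E^*}_{n-p,n-q}$.

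Now \cref{spectral_percolation_specific} is applied with the data $(X,M,E,p,q)$ replaced by $(M,M,E^*,n-p,n-q)$. The ambient $X = M$ is K\"ahler of $1$-bounded geometry by assumption; the $(n-q)$-Levi pseudoconvexity condition on $M$ is vacuous because $\partial M = \emptyset$; and $(n-q)$-Nakano lower semiboundedness of $E^*$ is exactly the hypothesis of the corollary. The theorem therefore yields the promoted discreteness of $\square^{E^*}_{n-p,n-q}$, which, transported back by $\hstar[E]$, is the claim.

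No genuine obstacle is present: the corollary is a bookkeeping consequence of \cref{spectral_percolation_specific} once one observes that $L^2$ Serre duality reverses the direction of ``percolation'' up the Dolbeault complex and swaps $E$ with $E^*$. The only point requiring a brief check is that the unitary intertwining formula $\hstar[E] \circ \square^E = \square^{E^*} \circ \hstar[E]$, valid a priori for the formal operators on smooth compactly supported forms, extends to the self-adjoint realizations; this, however, is automatic since on a complete manifold without boundary $\dbar^E_\maxx = \dbar^E_\minn$ (see \cref{sec:essential_self-adjointness}) and the Hodge star maps a core to a core.
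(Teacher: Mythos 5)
Your proof is correct and takes essentially the same route as the paper: conjugate by the $L^2$ Serre duality isomorphism $\hstar[E]$, apply \cref{spectral_percolation_specific} to $E^*$ in bidegree $(n-p,n-q)$ (with the Levi pseudoconvexity condition vacuous because $\partial M = \emptyset$), and transport back. The extra remark about the intertwining extending to the self-adjoint realizations is a reasonable sanity check that the paper leaves implicit, relying on completeness (and \cref{bounded_symbol_compact_supported_sections_dense}) to identify weak and strong extensions.
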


\begin{proof}
 By $L^2$ Serre duality and our assumption, $\square^{E^*}_{n-p,n-q-1}$ has discrete spectrum.
 From \cref{spectral_percolation_specific}, it follows that $\square^{E^*}_{n-p,n-q}$ also has discrete spectrum, and another application of duality implies that $\square^E_{p,q}$ also has this property.
\end{proof}

We would like to remark that it was shown in \cite{Celik2014} that for pseudoconvex domains in $\Cplx^n$, the compactness of the minimal solution operators to the $\dbar$-equation also percolates up the Dolbeault complex, a property which is formally weaker than the corresponding statement for $\square_{p,\bullet}$. 

\subsection*{Generalized Schr\"odinger operators on line bundles}

Let $(M,g)$ be a (oriented) Riemannian manifold, and let $E \to M$ be a Hermitian vector bundle.
Then every connection $\nabla$ on $E$ and section $V \in \bsec(M,\en{E})$ defines an elliptic differential operator
\begin{equation}
 \label{eq:schrodinger_type_operator}
 H_{\nabla,V} \coloneqq \nabla^\fadj \nabla + V \colon \bsec(M,E) \to \bsec(M,E),
\end{equation}
called a \defemph{generalized Schr\"odinger operator}.
This is an operator of Laplace type and, conversely, any formally self-adjoint Laplace type operator is of this form, see \cite[Lemma~2.1]{Gilkey2008}.
In case $E$ is a line bundle, $\nabla$ is a metric connection, and $V$ is self-adjoint, operators of the form $H_{\nabla V}$ are sometimes called \defemph{magnetic Schr\"odinger operators} as they generalize the quantum Hamiltonian of a charged particle moving through an electromagnetic field.

We show that the appropriate generalization of a theorem of Iwatsuka \cite[Theorem~5.2]{Iwatsuka1986} continues to hold for Schr\"odinger operators acting on the sections of (possibly nontrivial) line bundles over manifolds of $1$-bounded geometry:

\begin{thmx}
 \label{necessary_condition_for_schrodinger_operator_discrete_spectrum}
 Let $L \to M$ be a Hermitian line bundle over a Riemannian manifold of $1$-bounded geometry, and let $H_{\nabla,V} \coloneqq \nabla^\fadj\nabla + V$ be a generalized Schr\"odinger operator for a metric connection $\nabla$ and self-adjoint morphism $V \colon L \to L$.  Assume that $H_{\nabla,V}$ has a lower semibounded self-adjoint extension with discrete spectrum.
 Then
 \[ \lim_{x \to \infty}\int_{B(x,r)} \big(|R^\nabla|^2 + |V|\big)\, d\mu_g = \infty \]
 for all $r>0$, where $R^\nabla$ is the curvature of $\nabla$.
\end{thmx}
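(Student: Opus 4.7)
The strategy is a contrapositive via a Weyl-sequence argument. Assume there exist $r>0$, $C>0$, and a sequence $(x_k)_k$ in $M$ leaving every compact set with
\[
 \int_{B(x_k,r)}\bigl(|R^\nabla|^2 + |V|\bigr)\,d\mu_g \leq C \qquad\text{for all } k.
\]
After shrinking $r$ to some $r_0\in(0,r]$ below a bounded-geometry lower bound on the injectivity radius of $M$ and passing to a subsequence, arrange that the balls $B(x_k,r_0)$ are pairwise disjoint, each lies in a normal-coordinate chart centered at $x_k$, and bounded geometry yields a uniform two-sided comparison of the metric with the Euclidean metric in these coordinates. The plan is to construct sections $u_k\in\bseccc(M,L)$ with $\supp u_k\subset B(x_k,r_0)$, a uniform positive lower bound on $\|u_k\|_{L^2}$, and a uniform upper bound on $\langle H_{\nabla,V}u_k,u_k\rangle$. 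For any combination $v=\sum_{k=1}^N c_k u_k$, the disjointness of supports gives $\|v\|_{L^2}^2=\sum|c_k|^2\|u_k\|_{L^2}^2$ and $\langle\hat H v,v\rangle=\sum|c_k|^2\langle H_{\nabla,V}u_k,u_k\rangle$ (the quadratic form of $\hat H$ agrees with $\int(|\nabla u|^2+V|u|^2)\,d\mu_g$ on $\bseccc(M,L)$), so $\lspan{u_1,\dots,u_N}\subset Q(\hat H)$ has uniformly bounded Rayleigh quotient. The min-max principle then forces $\lambda_N(\hat H)\le\mathrm{const}$ for every $N$, contradicting the discreteness of the spectrum of $\hat H$.

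The technical core is a \emph{uniform gauge estimate} on each $B(x_k,r_0)$. Since the ball is contractible and lies in a trivialising chart, $L$ restricts to a trivial Hermitian line bundle, and in a unitary trivialisation the connection takes the form $\nabla=d+iA_k$ for a real $1$-form $A_k$, with $R^\nabla = i\,dA_k$. A gauge transformation $A_k\mapsto A_k-d\phi_k$ (multiplying the trivialisation by $e^{i\phi_k}$) is applied to arrange
\[
 \|A_k\|_{L^2(B(x_k,r_0))} \leq C_0\,\|R^\nabla\|_{L^2(B(x_k,r_0))},
\]
with $C_0$ depending only on $r_0$, the dimension, and the bounded-geometry constants of $M$. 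Concretely, I transfer the problem via normal coordinates to a Euclidean ball of radius $r_0$, produce a primitive $A_k^\star$ of $dA_k$ satisfying $\|A_k^\star\|_{L^2}\lesssim\|dA_k\|_{L^2}$ by standard Hodge or Bogovskii theory on the Euclidean ball, and take $\phi_k$ to be any scalar antiderivative of the closed $1$-form $A_k-A_k^\star$. I regard this uniform gauge estimate as the principal obstacle: the whole scheme collapses if $C_0$ is allowed to grow with $k$, and independence of $k$ is exactly what $1$-bounded geometry of $M$ is providing, via the uniform comparison of normal and Euclidean coordinates on the balls $B(x_k,r_0)$.

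To assemble the Weyl sequence, fix once and for all a smooth cutoff $\chi\in C_c^\infty(\R^n)$ supported in the Euclidean ball of radius $r_0/2$, transplant it via normal coordinates at $x_k$ to $\chi_k\in C_c^\infty(B(x_k,r_0/2))$, and let $u_k\in\bseccc(M,L)$ be the section represented by $\chi_k$ in the gauge-fixed trivialisation. Bounded geometry provides uniform two-sided bounds on $\|u_k\|_{L^2}$ and uniform upper bounds on $\|\chi_k\|_\infty$ and $\|d\chi_k\|_{L^2}$. Since $\nabla u_k$ corresponds in the trivialisation to $d\chi_k+iA_k\chi_k$,
\[
 |\nabla u_k|^2 \leq 2|d\chi_k|^2 + 2|A_k|^2\,|\chi_k|^2
\]
pointwise. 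Integrating, controlling $\int |A_k|^2|\chi_k|^2\leq\|\chi_k\|_\infty^2\,C_0^2\,\|R^\nabla\|_{L^2(B(x_k,r_0))}^2$ via the gauge estimate, and using $\int_{B(x_k,r)}|V|\,d\mu_g\leq C$ together with the uniform bound on $\|\chi_k\|_\infty$ yields a uniform upper bound on $\langle H_{\nabla,V}u_k,u_k\rangle$, completing the construction and hence the proof.
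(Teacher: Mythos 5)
Your proposal is correct. It shares with the paper the central technical ingredient: a gauge estimate with constants uniform in the base point, namely the existence, on each ball $B(x_k,r_0)$ in normal coordinates, of a one-form $A_k^\star$ with $dA_k^\star = -\Cplxi R^\nabla$ and $\|A_k^\star\|_{L^2} \leq C_0\|R^\nabla\|_{L^2}$, with $C_0$ independent of $k$. This is precisely the content of the paper's \cref{poincare_lemma_with_estimates} combined with \cref{gauge_invariance_of_energy}, and your discussion of why $1$-bounded geometry makes the constant uniform (via \cref{bounded_geometry_bounded_distortion,rem:bounded_geometry_bounded_coefficients}) is on the mark; you could cite those directly rather than rederiving them. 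Where you genuinely diverge is in how the gauge estimate is converted into a spectral conclusion. The paper argues forward: by Persson's theorem (\cref{limit_of_infimum_of_spectrum}, via \cref{eq:liminf_energy_open_subsets_to_infty}), discreteness of spectrum forces $\energy{\nabla,V}{B(x_k,r)} \to \infty$, and \cref{integral_of_connection_form_potential_to_infty} then lower-bounds the curvature–potential integral by this energy. You instead argue by contrapositive: assuming the integral stays bounded along a sequence $x_k \to \infty$, you pass to a subsequence with pairwise disjoint balls, build a disjointly supported test family $u_k$ with uniformly bounded Rayleigh quotients (again by the gauge estimate), and invoke the min--max principle to contradict $\lambda_N \to \infty$. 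Both routes are sound; your Weyl-sequence/min--max argument is more elementary and self-contained, bypassing the decomposition principle and the Persson-type machinery of \cref{sec:spec_elliptic_general}, at the cost of the extra combinatorial step of extracting disjoint balls. The paper's route is more systematic, reusing a general theorem about self-adjoint extensions of elliptic operators that it develops once and applies to several problems. One small technical check you should make explicit: the test section $u_k$ must lie in $\dom{Q_{\hat H}}$ for the min--max principle to apply, which holds because $\bseccc(M,L) \subseteq \dom{\hat H} \subseteq \dom{Q_{\hat H}}$ and $Q_{\hat H}(u_j,u_k) = \llangle H_{\nabla,V}u_j,u_k\rrangle$ vanishes for $j \neq k$ since differential operators do not increase supports.
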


\begin{rem}
 On $\R^n$, it is possible to characterize the discreteness of spectrum of operators of the form $-\Delta + V$ (\ie Schr\"odinger operators without magnetic field) by considering integrals of $|V|$ over sets which go to infinity, similarly to \cref{necessary_condition_for_schrodinger_operator_discrete_spectrum}.
 This is done in \cite{Molchanov1953}, and uses the concept of Wiener capacity of compact subsets of $\R^n$.
 There has also been progress to extend this to magnetic Schr\"odinger operators, see \cite{Kondratiev2004,Kondratiev2009}, but while some of those results are available on manifolds of bounded geometry, it is not clear what their geometric interpretation is, or if they can be generalized to the case of nontrivial line bundles.
\end{rem}

\Cref{necessary_condition_for_schrodinger_operator_discrete_spectrum} can be applied to the Dolbeault Laplacian on complete K\"ahler manifolds (again with some bounded geometry assumptions) on top (or bottom) degree forms with values in a Hermitian holomorphic line bundle, and we have the following result:

\begin{corx}
 \label{necessary_condition_square_discrete_spectrum}
 Let $L \to M$ be a Hermitian holomorphic line bundle over a K\"ahler manifold of $1$-bounded geometry, and let $p \in \{0,n\}$.
 Assume that one of the following conditions is satisfied:
 \begin{enumerate}
 \item $\square^L_{p,0}$ has discrete spectrum.
 \item $\square^L_{p,n}$ has discrete spectrum.
 \item For some $1 \leq q \leq n-1$, $L$ is $(q+1)$-Nakano lower semibounded and $\square^L_{p,q}$ has discrete spectrum.
 \item For some $1 \leq q \leq n-1$, $L^*$ is $(n-q+1)$-Nakano lower semibounded and $\square^L_{p,q}$ has discrete spectrum.  
 \end{enumerate}
 Then
 \begin{equation}
  \label{eq:necessary_condition_square_discrete_spectrum}
  \lim_{x \to \infty} \int_{B(x,r)} |R^L|^2\,d\mu_g = \infty
 \end{equation}
 for all $r>0$.
\end{corx}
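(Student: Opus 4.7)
The plan is to reduce each of the four conditions to an application of \cref{necessary_condition_for_schrodinger_operator_discrete_spectrum} by exhibiting $\square^L_{p,q}$ as a generalized Schr\"odinger operator $H_{\nabla,V}$ on a genuine line bundle. The driving elementary observation is that $\Lambda^{p,q}T^*M \otimes L$ has rank $\binom{n}{p}\binom{n}{q}$, which equals one precisely when both $p$ and $q$ lie in $\{0,n\}$; since $p \in \{0,n\}$ is already built into the corollary, the task is to arrange $q \in \{0,n\}$ as well.

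I would first dispose of cases (i) and (ii) directly. Fix $q \in \{0,n\}$ and set $\mathcal L \coloneqq \Lambda^{p,q}T^*M \otimes L$, a Hermitian line bundle on $M$. The Bochner--Kodaira--Nakano identity on the K\"ahler manifold $M$ gives a Weitzenb\"ock decomposition
\[
 \square^L_{p,q} = \nabla^\fadj \nabla + V \qquad\text{on } \bsec(M,\mathcal L),
\]
where $\nabla$ is the tensor-product metric connection built from the Chern connection on $L$ and the Levi--Civita connection on $\Lambda^{p,q}T^*M$, and $V \in \bsec(M,\en{\mathcal L})$ is self-adjoint and depends \emph{linearly} on $R^L$, the Ricci curvature, and (if $p=n$) the full Riemann curvature of $(M,g)$. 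Because $M$ is of $1$-bounded geometry, the Riemannian contributions to $V$ and to $R^\nabla$ are bounded uniformly over $x \in M$, and $\mathrm{vol}(B(x,r))$ is bounded uniformly as well, so there exist constants $C_1,C_2$ with
\[
 |V|(x) \leq C_1\bigl(1 + |R^L|(x)\bigr) \quad\text{and}\quad |R^\nabla|^2(x) \leq C_2\bigl(1 + |R^L|^2(x)\bigr).
\]
Applying \cref{necessary_condition_for_schrodinger_operator_discrete_spectrum} to $H_{\nabla,V}$ yields $\int_{B(x,r)}(|R^\nabla|^2 + |V|)\,d\mu_g \to \infty$ as $x \to \infty$, and combining this divergence with the pointwise and volume bounds immediately forces \cref{eq:necessary_condition_square_discrete_spectrum}.

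For cases (iii) and (iv) I would use the percolation results to reduce $q$ to an extreme. In case (iii), iterated application of \cref{spectral_percolation_specific} (Levi pseudoconvexity being automatic in the boundaryless setting) propagates discreteness of $\square^L_{p,q}$ upward through the Dolbeault complex to $\square^L_{p,n}$, reducing to case (ii); the relevant observation is that for a \emph{line} bundle the pointwise $(q+1)$-Nakano lower bound on the ordered eigenvalues $\lambda_1 \leq \dotsb \leq \lambda_n$ of $iR^L$ controls the sums $\lambda_1 + \dotsb + \lambda_{q'}$ defining $q'$-Nakano semiboundedness for every $q' \geq q+1$, with a constant that may degrade but stays finite, so the hypothesis propagates across the intermediate degrees. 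Case (iv) is symmetric: \cref{spectral_percolation_serre} together with the $(n-q+1)$-Nakano lower semiboundedness of $L^*$ descends from $\square^L_{p,q}$ down to $\square^L_{p,0}$, reducing to case (i).

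The main technical obstacle is the clean extraction of the Bochner--Kodaira--Nakano identity at top or bottom antiholomorphic degree and the bookkeeping that $V$ depends on $R^L$ only \emph{linearly}; this is what allows the exponent $2$ on $|R^L|$ (rather than a higher power) in the conclusion. The $1$-bounded geometry hypothesis is precisely what makes the extraneous curvature contributions coming from $\Lambda^{p,q}T^*M$ and from the Ricci tensor absorbable into uniform constants.
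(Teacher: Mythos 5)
Your proposal follows the paper's proof in structure and substance: for $q\in\{0,n\}$ you realize (twice) the Dolbeault Laplacian as a generalized Schr\"odinger operator on the Hermitian line bundle $\Lambda^{p,q}T^*M\otimes L$, invoke \cref{necessary_condition_for_schrodinger_operator_discrete_spectrum}, and absorb the bounded‐geometry contributions together with a Cauchy--Schwarz step to pass from $\int(|R^\nabla|^2+|V|)\to\infty$ to $\int|R^L|^2\to\infty$; for (iii) and (iv) you iterate \cref{spectral_percolation_specific} respectively \cref{spectral_percolation_serre}, using \cref{rem:nakano_lower_semibounded} to propagate the Nakano lower bound across the intermediate degrees, exactly as the paper does. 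The one imprecision worth fixing is attributing the Schr\"odinger decomposition to the Bochner--Kodaira--Nakano identity~\cref{eq:bkn}: that formula expresses $\square^L$ via the $(1,0)$-Laplacian $d^{L}_{1,0}d^{L,\fadj}_{1,0}+d^{L,\fadj}_{1,0}d^{L}_{1,0}$, not the connection Laplacian, and so does not directly give $\nabla^\fadj\nabla+V$; the paper instead uses the Lichnerowicz-type formula~\cref{eq:dolbeault_weitzenboeck} for the Dirac operator $\sqrt 2(\dbar^L+\dbar^{L,\fadj})$ associated with the Clifford structure~\cref{eq:clifford_kaehler}, which yields $2\square^L_{p,\bullet}=\widetilde\nabla^\fadj\widetilde\nabla + c_p(R^{\Lambda^{p,\bullet}T^*M\otimes L})$ and hence the needed form (and the bound $|c_p(R)|\le\sqrt{n(2n+1)}\,|R^{\Lambda^{p,\bullet}T^*M\otimes L}|$ for the H\"older step).
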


Note that, as is the case for (Nakano) positivity of line bundles, the Nakano lower semiboundedness on $L^*$ in \cref{necessary_condition_square_discrete_spectrum} corresponds to Nakano \emph{upper} semiboundedness on $L$.

If $M=\Cplx^1$ and $L$ is the trivial line bundle with metric given by a weight $\varphi \colon \Cplx \to \R$, and if $\varphi$ is subharmonic and such that $\Delta\varphi$ defines a doubling measure, then the condition $\int_{B(x,1)} |R^L|\,d\mu_g \approx \int_{B(x,1)} \Delta \varphi\,d\mu_g \to \infty$ as $x \to \infty$ is known from \cite{Marzo2009} (or already \cite[Theorem~2.3]{Haslinger2007}, with slightly stronger assumptions) to be both necessary and sufficient for the discreteness of spectrum of $\square^L_{0,1}$.
A version of \cref{necessary_condition_square_discrete_spectrum} for the case $M=\Cplx^n$ appeared as joint work of the author with Friedrich Haslinger in \cite[Theorem~4.1]{Berger2017}.

\subsubsection*{Structure of the article}

In \cref{sec:differential_operators,sec:spec_elliptic_general}, we will develop some of the needed prerequisites on the essential spectrum of self-adjoint extensions of elliptic differential operators.
While the results therein are not fundamentally new, we believe that the presented generality merit their inclusion into this manuscript.
\Cref{sec:complex_geometry} provides the necessary concepts from complex and Hermitian geometry, with a focus on Weitzenb\"ock type formulas for the Dolbeault Laplacian which are needed in the proofs of \cref{spectral_percolation_specific,necessary_condition_square_discrete_spectrum}.
Finally, \cref{sec:proof_of_percolation,sec:schrodinger_operators} contain the proofs of the main results, and \cref{sec:bounded_geometry} provides background material on Riemannian manifolds of bounded geometry.

\ifams
\subsubsection*{Acknowledgments}
\else
\subsection*{Acknowledgments}
\fi
The results of this article are part of the author's doctoral research under the supervision of Prof.~Friedrich Haslinger.
The author wishes to thank Prof.~Haslinger for the many discussions on the subjects connected to this research, and Prof.~Siqi Fu for pointing out that the constants appearing in (the proof of) \cref{spectral_percolation_specific} can be enhanced by replacing an inequality employed in a previous version of this manuscript with the IMS localization formula~\cref{eq:ims_localization}.
\ifams\else
\thethanks{}
\fi

 \section{Preliminaries on (extensions of) differential operators}
\label{sec:differential_operators}

Let $M$ be a smooth manifold (always assumed to be second countable and, for simplicity, oriented), possibly with (smooth) boundary $\partial M$, and let $E \to M$ and $F \to M$ be smooth vector bundles.
For simplicity, we will always assume $M$ to be oriented.
We denote by $\bsec(M,E)$, $\bsecc(M,E)$, and $\bseccc(M,E)$ the spaces of smooth sections of $E$, of smooth sections of $E$ with compact support, and of smooth sections with compact support in the interior $\interior M$ of $M$, respectively.
Similarly, we have the function spaces $C^\infty(M)$, $C^\infty_c(M)$, and $C^\infty_\cc(M)$.

Let $D \colon \bsec(M,E) \to \bsec(M,F)$ be a (linear) differential operator, \ie a linear map that does not increase the support of sections.
For a cotangent vector $\gamma \in T^*_xM$, the \defemph{principal symbol} of $D$ at $\gamma$ is denoted by $\symb{D}(x,\gamma) \colon E_x \to F_x$, see for instance \cite[chapter~IV]{Palais1965}, and $D$ is called \defemph{elliptic} if $\symb{D}(x,\gamma)$ is a linear isomorphism for all $x \in M$ and $\gamma \in T^*_xM \setminus\{0\}$.
Suppose that $M$ is equipped with a Riemannian metric $g$, and that $E$ and $F$ carry Hermitian metrics, denoted by $\langle \arghere,\arghere \rangle$.
We then have the space $L^2(M,E)$, which is the Hilbert space completion of $\bseccc(M,E)$ with respect to
\begin{equation}
 \label{eq:ltwo_inner_product}
 \llangle s,t \rrangle \coloneqq \int_M \langle s,t \rangle\,d\mu_g.
\end{equation}
In~\cref{eq:ltwo_inner_product}, $\mu_g$ is the measure on $M$ induced by the metric and the orientation.
The \defemph{formal adjoint} to $D$ is the differential operator $D^\fadj \colon \bsec(M,F) \to \bsec(M,E)$ characterized by $\llangle Ds,t \rrangle = \llangle s,D^\fadj t \rrangle$ for all $s \in \bseccc(M,E)$ and $t \in \bsecc(M,F)$.
We use the sign convention for the principal symbol that makes $\symb{D^\fadj}(x,\gamma) = (-1)^k \symb{D}(x,\gamma)^*$, with $k$ the order of $D$.

A differential operator $D \colon \bsec(M,E) \to \bsec(M,E)$ on a Riemannian manifold is said to be of \defemph{Laplace type} if $\symb{D}(x,\gamma) = -|\gamma|^2\,\id{E_x}$ for all $x \in M$ and $\gamma \in T^*_xM$, and of \defemph{Dirac type} if $D^2$ is of Laplace type.
Dirac type operators acting on the sections of $E$ are in one-to-one correspondence with \emph{Clifford module structures} on $E$, \ie morphisms $c \colon T^*M \otimes E \to E$ with $c(\gamma) c(\beta) + c(\beta) c(\gamma) = -2\langle \gamma,\beta\rangle\id{E}$, where we write $c(\gamma) \coloneqq c(\gamma \otimes \arghere)$.
A tuple $(E,M,c,\nabla)$ with $E \to M$ a Hermitian vector bundle over a Riemannian manifold, $c$ a Clifford module structure on $E$, and $\nabla$ a metric connection, is called a \defemph{Dirac bundle} in the sense of \cite{Lawson1989} if $\nabla c = 0$ and $c(\gamma,\arghere)$ is skew-Hermitian for all $\gamma \in T^*M$.
The associated Dirac operator $D \coloneqq c \circ \nabla$ is then formally self-adjoint.

Given a connection $\nabla \colon \bsec(M,E) \to \Omega^1(M,E)$ on $E$, we denote by $d^\nabla \colon \Omega^\bullet(M,E) \to \Omega^{\bullet+1}(M,E)$ the \defemph{exterior covariant derivative} associated to $\nabla$, which satisfies $d^\nabla s = \nabla s$ for $s \in \bsec(M,E)$ and
\begin{equation}
 \label{eq:exterior_covariant_derivative}
 d^\nabla(\alpha \wedge u) = d\alpha \wedge u + (-1)^k \alpha \wedge d^\nabla u
\end{equation}
for $\alpha \in \Omega^k(M)$ and $u \in \Omega(M,E)$.
Choosing a torsion free connection on $TM$, this is $d^\nabla = \extpsymb \circ \widetilde \nabla$, with $\extpsymb$ the exterior multiplication map, and $\widetilde \nabla$ the connection induced on $\Lambda T^*M \otimes E$, see \cite[Theorem~12.56]{Lee2009}.
The curvature of $\nabla$ then satisfies $R^{\nabla} \evwedge u = d^\nabla d^\nabla u$ for $u \in \Omega(M,E)$, where the wedge product is combined with the evaluation map $\en{E} \otimes E \to E$.
If $M$ is a complex manifold, then $d^\nabla_{1,0}$ denotes the $(1,0)$-part of $d^\nabla$, defined as $\Pi_{p+1,q} \circ d^\nabla$ on $\Omega^{p,q}(M,E)$, with $\Pi_{p+1,q}$ the projection onto $\Lambda^{p+1,q}T^*M \otimes E$.
Similarly, the $(0,1)$-part $d^\nabla_{0,1}$ is defined.
If $E$ is Hermitian, then the \defemph{Chern connection} on $E$ is the unique metric connection $\nabla$ with $d^{\nabla}_{0,1} = \dbar^E$, see for instance \cite[Theorem~2.1]{Wells2008}.

\subsection{The IMS localization formula}
\label{sec:ims_localization}

Let $E \to M$ be a Hermitian vector bundle over a Riemannian manifold.
For a formally self-adjoint second order differential operator $H \colon \bsec(M,E) \to \bsec(M,E)$, the \defemph{IMS localization formula}\footnote{  Usually, this terminology is only applied to formula~\cref{eq:ims_localization} for $H$ a Laplace type operator, see \cite[Theorem~3.2]{Cycon1987}, in which case $\symb{D}(d\varphi_k)s = -|d\varphi_k|^2 s$.
  According to \cite{Cycon1987}, the acronym stands for Ismagilov \cite{Ismagilov1961}, Morgan \cite{Morgan1979}, Morgan--Simon \cite{Morgan1980}, and I.M.~Sigal \cite{Sigal1982}.
}
reads
\begin{equation}
 \label{eq:ims_localization}
 \llangle Hs, s \rrangle
 = \sum_{k=1}^\infty \big(\llangle H(\varphi_k s), \varphi_k s \rrangle + \llangle \symb{H}(d\varphi_k) s, s \rrangle\big)
\end{equation}
for $s \in \bseccc(M,E)$ and $\varphi_k \in C^\infty_c(M,[0,1])$ having the property that $(\varphi_k^2)_{k=1}^\infty$ is a partition of unity for $M$.
It is easily obtained by invoking the definition of the principal symbol of $H$, namely
\[
 \symb{H}(d\varphi_k)s = \frac{1}{2}\big[[H,\varphi_k],\varphi_k\big]s = \frac{1}{2}\big(H(\varphi_k^2s) + \varphi_k^2 Hs\big) - \varphi_k H(\varphi_k s),
\]
then integrating and summing over $k$.
A similar localization formula holds for first order operators $D \colon \bsec(M,E) \to \bsec(M,F)$:
If $s \in \bsecc(M,E)$, then
\begin{align*}
  \|Ds\|^2
  & = \sum_{k=1}^\infty \re \llangle D s, D (\varphi_k^2 s) \rrangle \\
  & = \sum_{k=1}^\infty \re \llangle \varphi_k D s, D(\varphi_ks) + \symb{D}(d\varphi_k)s \rrangle \\
  & = \sum_{k=1}^\infty \re \llangle D(\varphi_k s) - \symb{D}(d\varphi_k)s, D(\varphi_k s) + \symb{D}(d\varphi_k)s \rrangle \\
  & = \sum_{k=1}^\infty \big(\|D(\varphi_k s)\|^2 - \|\symb{D}(d\varphi_k) s\|^2\big). \numberthis\label{eq:ims_localization_first_order}
\end{align*}

\subsection{Strong and weak extensions of differential operators}
\label{sec:strong_weak_extensions}

If $D \colon \bsec(M,E) \to \bsec(M,F)$ is a differential operator, then it makes sense to ask whether the linear map
\[ D_\cc \coloneqq D|_{\bseccc(M,E)} \colon \bseccc(M,E) \to L^2(M,F) \]
extends to a closed operator from $L^2(M,E)$ to $L^2(M,F)$. Let $D_\cc^*$ denote the Hilbert space adjoint of $D_\cc$.
The definition of $D^\fadj$ implies that $(D^\fadj)_\cc \subseteq D_\cc^*$, meaning $\graph{(D^\fadj)_\cc} \subseteq \graph{D_\cc^*}$, hence $D_\cc$ is closable since its adjoint is densely defined.
To save on notational clutter, we shall say that a linear operator $A \colon \dom{A} \subseteq L^2(M,E) \to L^2(M,F)$ is an \defemph{extension of $D$} (or ``\defemph{$A$ extends $D$}'') if $D_\cc \subseteq A$, and a \defemph{closed extension of $D$} if, in addition, $A$ is closed.

\begin{defn}
 The \defemph{strong extension} (or \defemph{minimal closed extension}) of $D$, denoted by $D_\minn$, is the closure of $D_\cc \colon \bseccc(M,E) \subseteq L^2(M,E) \to L^2(M,F)$, and the \defemph{weak extension} (or \defemph{maximal closed extension}) of $D$ is $D_\maxx \coloneqq (D^\fadj)_\cc^*$.
\end{defn}

Since $D_\cc = (D^{\fadj\fadj})_\cc \subseteq (D^\fadj)_\cc^*$, the operator $D_\maxx$ really is an extension of $D$.
Both the strong and weak extensions of $D$ are closed, and hence $D_\minn \subseteq D_\maxx$.
It holds that $D_\maxx = ((D^\fadj)_\minn)^*$, since a densely defined operator and its closure have the same adjoint.
This immediately implies
\begin{equation}
 \label{eq:weak_and_strong_extension_of_formal_adjoint}
 (D^\fadj)_\maxx = (D_\minn)^* \quad\text{and}\quad (D^\fadj)_\minn = (D_\maxx)^*.
\end{equation}
By the definition of the formal adjoint of $D^\fadj$, we have $\llangle (D^\fadj)_\cc t,s \rrangle = \llangle t,Ds \rrangle$ for $t \in \bseccc(M,F)$ and $s \in \bsecc(M,E)$, thus $\bsecc(M,E) \subseteq \dom{(D^\fadj)_\cc^*} = \dom{D_\maxx}$ and $D_\maxx|_{\bsecc(M,E)} = D|_{\bsecc(M,E)}$.
In particular, every extension $A$ of $D$ with $A \subseteq D_\maxx$ satisfies
\begin{equation}
 \label{eq:extension_of_diff_op_restrictions_to_diff_op}
 A|_{\dom{A}\cap \bsecc(M,E)} = D|_{\dom{A} \cap \bsecc(M,E)}.
\end{equation}
As its name suggests, the weak extension admits a description in terms of the distributional action of $D$:
It is easy to see that
\[ \dom{D_\maxx} = \big\{s \in L^2(M,E) : Ds \in L^2(M,F) \text{ in the sense of distributions}\big\} \]
and $D_\maxx s = Ds$ for $s \in \dom{D_\maxx}$, where $Ds$ is the distributional derivative.

\begin{rem}
 \label{rem:symmetric_extensions_of_diff_ops}
 It is clear that $D_\minn$ is the smallest extension of $D_\cc$ to a closed operator from $L^2(M,E)$ to $L^2(M,F)$.
 The weak extension $D_\maxx$ is maximal in the sense that it is the largest extension of $D$ whose adjoint extends $D^\fadj$, \ie contains $\bseccc(M,F)$ in its domain, see \cite{Grieser2002}.
 If $A$ is a \emph{symmetric} extension of a (necessarily formally self-adjoint) differential operator $D \colon \bsec(M,E) \to \bsec(M,E)$, then $D_\cc \subseteq A \subseteq A^*$, so $\bseccc(M,E) \subseteq \dom{A} \subseteq \dom{A^*}$.
 Thus, $A$ is a restriction of $D_\maxx$.
 This comes as no surprise, since $D_\maxx = (D^\fadj)_\maxx = (D_\minn)^* = D_\cc^*$ and all symmetric extensions of a symmetric operator on a Hilbert space are restrictions of its adjoint.
\end{rem}

An application of the usual interior regularity estimates for elliptic operators (see \cite[Theorem~5.11.1]{Taylor2011}) is the following statement about the regularity of sections in the domain of the weak extension of an elliptic operator, a proof of which can be found, for instance, in \cite[Proposition~2.1]{Bei2017}:

\begin{lem}
 \label{domain_of_extension_in_local_sobolev_space}
 Let $D \colon \bsec(M,E) \to \bsec(M,F)$ be a $k$\textsuperscript{th} order elliptic differential operator, and suppose $A$ is an extension of $D$ with $D_\minn \subseteq A \subseteq D_\maxx$. \footnote{In particular, this is true for self-adjoint $A$, see \cref{rem:symmetric_extensions_of_diff_ops}.}
 Then $\dom{A} \subseteq H^k_\loc(M,E)$, and if $A$ is closed, then it has a core consisting of sections which are smooth on $\interior M$.
\end{lem}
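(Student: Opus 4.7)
The first assertion follows from standard interior elliptic regularity. Since $A \subseteq D_\maxx$, any $s \in \dom{A}$ satisfies $s \in L^2(M,E)$ and $As = Ds \in L^2(M,F)$ in the distributional sense. Applying the cited local regularity estimate for $k$\textsuperscript{th} order elliptic operators in coordinate charts on $\interior M$ that trivialize $E$ and $F$ then yields $s \in H^k_\loc(M,E)$. This step is purely local and does not see the boundary.

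For the second assertion, assume $A$ is closed and fix $s \in \dom{A}$. The plan is to produce a family of approximants $s_\varepsilon \in \dom{A}$, each smooth on $\interior M$, with $s_\varepsilon \to s$ and $A s_\varepsilon \to A s$ in $L^2$. My tool of choice is a manifold-adapted Friedrichs mollifier: I would fix a locally finite cover of $\interior M$ by relatively compact coordinate charts with a subordinate partition of unity $\{\psi_i\}$, together with a positive smooth function $\rho$ on $\interior M$ tending to $0$ as $x \to \partial M$, and build $J_\varepsilon$ by summing chart-wise convolutions of $\psi_i s$ against a standard mollifier of scale $\varepsilon \rho(x)$. The vanishing of $\rho$ at $\partial M$ ensures that $J_\varepsilon s$ is smooth on all of $\interior M$.

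Standard mollification theory, combined with the first part of the lemma, will give $J_\varepsilon s \to s$ in $L^2$. Friedrichs' commutator lemma---asserting that $[D, J_\varepsilon]$ is uniformly bounded on $L^2$ and converges strongly to zero as $\varepsilon \to 0$---then gives $D(J_\varepsilon s) = J_\varepsilon(Ds) + [D, J_\varepsilon]s \to Ds$ in $L^2$, so that $J_\varepsilon s \to s$ in the graph norm of $D_\maxx$.

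The hard part will be ensuring that each approximant $s_\varepsilon$ lies in $\dom{A}$ itself, rather than merely in the larger $\dom{D_\maxx}$, since $A$ may encode nontrivial boundary conditions at $\partial M$ that mollification does not automatically respect. I would resolve this by combining the mollification with a cutoff argument: take $\eta_n \in C^\infty_c(\interior M,[0,1])$ with $\eta_n \uparrow 1$ on $\interior M$, consider $\eta_n s \in H^k(M,E)$ (compactly supported in $\interior M$ by the first part of the lemma, hence in $\dom{D_\minn} \subseteq \dom{A}$ by density of $\bseccc(\interior M,E)$ in compactly supported $H^k$ sections), and then mollify $\eta_n s$ in coordinate charts to obtain smooth approximants in $\bseccc(\interior M,E) \subseteq \dom{D_\minn} \subseteq \dom{A}$. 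A diagonal choice of parameters, together with estimates on the cutoff commutator $[D,\eta_n]$ (controlled via $s \in H^k_\loc$ and careful choice of $\rho$) and the closedness of $A$, will then deliver a smooth-on-$\interior M$ core. The balance to be struck is between mollifying finely enough to gain smoothness while keeping the alteration small enough in graph norm to be absorbed by $\dom{D_\minn}$.
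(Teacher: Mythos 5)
Your first assertion is fine: for $s\in\dom{A}$ we have $s\in L^2(M,E)$ and, since $A\subseteq D_\maxx$, also $Ds=As\in L^2(M,F)$ in the sense of distributions; interior elliptic regularity in charts of $\interior M$ trivializing $E$ and $F$ then gives $s\in H^k_\loc(M,E)$. This part is purely local and does not see $\partial M$, as you say.

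The second part has a genuine gap. Your final plan is to approximate $s\in\dom{A}$ by sections in $\bseccc(M,E)$, obtained by cutting off with $\eta_n\in C^\infty_c(\interior M,[0,1])$ and then mollifying $\eta_n s$ in charts. But $\bseccc(M,E)\subseteq\dom{D_\minn}$, and $\dom{D_\minn}$ is a \emph{closed} subspace of $\dom{A}$ with the graph norm of $A$: indeed $D_\minn\subseteq A$ means the graph norm of $A$ restricted to $\dom{D_\minn}$ coincides with the graph norm of $D_\minn$, and $D_\minn$ is closed by definition. Consequently, the closure in the graph norm of $A$ of any family of approximants lying in $\dom{D_\minn}$ stays inside $\dom{D_\minn}$. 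When $D_\minn\subsetneq A$ --- which is exactly the case of interest here, since $\square^E$ with $\dbar$-Neumann boundary conditions sits strictly between the minimal and maximal extensions on a manifold with boundary --- this can never produce a core for $A$. No diagonal choice of parameters, and no estimate on $[D,\eta_n]s$, can repair this: the obstruction is structural, not quantitative. Concretely, $D(\eta_n s)=\eta_n Ds+[D,\eta_n]s$; since $\eta_n Ds\to Ds$ in $L^2$, having $[D,\eta_n]s\to 0$ would force $s\in\dom{D_\minn}$, which fails in general. The $H^k_\loc$ information from the first part gives no control on the size of $s$ in $H^{k-1}$ near $\partial M$, so there is also no hope of taming the commutator term by a cunning choice of $\eta_n$.

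What one actually needs is a regularization that keeps the approximants in $\dom{A}$ without forcing them into $\dom{D_\minn}$. The right mechanism is to arrange that the regularized section differs from $s$ only by an element of $\dom{D_\minn}$; since $\dom{D_\minn}\subseteq\dom{A}$ and $\dom{A}$ is a linear space, this keeps it in $\dom{A}$. This is precisely where your \emph{first} instinct was sound: a Friedrichs-type mollifier $J_\varepsilon$ on $\interior M$ with scale shrinking to zero at $\partial M$, for which one shows that $J_\varepsilon s$ is smooth on $\interior M$, that $J_\varepsilon s\to s$ and $D(J_\varepsilon s)\to Ds$ in $L^2$, and --- the crucial point you would need to add --- that $J_\varepsilon s-s\in\dom{D_\minn}$, whence $J_\varepsilon s=s+(J_\varepsilon s-s)\in\dom{A}$. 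Establishing this last containment together with the $L^2$ convergences for a variable-scale mollifier is the real technical content; the paper itself does not reprove the lemma but refers to Bei, Proposition~2.1, for it. Your write-up abandons this route exactly at the point where it would have succeeded and substitutes an argument that cannot work.
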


\begin{lem}
 \label{multiplication_by_compact_support_function_maps_to_strong_domain}
 Let $D \colon \bsec(M,E) \to \bsec(M,F)$ be an elliptic differential operator and $\varphi \in C^\infty_\cc(M)$.
 Then the operator of multiplication by $\varphi$ maps $\dom{D_\maxx}$ to $\dom{D_\minn}$.
\end{lem}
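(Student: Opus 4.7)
The plan is to reduce to the Euclidean case via a partition of unity argument and then apply Friedrichs mollifiers.

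First, I would record that $\varphi s \in \dom{D_\maxx}$ with compact support in $\interior M$. This follows from \cref{domain_of_extension_in_local_sobolev_space}: if $s \in \dom{D_\maxx}$ then $s \in H^k_\loc(M,E)$, where $k$ is the order of $D$. Since $\varphi$ has compact support in $\interior M$, $\varphi s$ lies in $H^k(M,E)$ with compact support in $\interior M$, and the Leibniz rule $D(\varphi s) = \varphi\,Ds + [D,\varphi]s$ (where $[D,\varphi]$ is a differential operator of order $k-1$ with compact support) shows $D(\varphi s) \in L^2(M,F)$.

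Next, cover the compact set $\supp\varphi \subseteq \interior M$ by finitely many coordinate charts $(U_i)_{i=1}^N$ such that $E|_{U_i}$ and $F|_{U_i}$ are trivialized and $\overline{U_i} \subseteq \interior M$. Choose a smooth partition of unity $\{\eta_i\}_{i=1}^N$ subordinate to this cover with $\sum_i \eta_i = 1$ on $\supp\varphi$, so that $\varphi s = \sum_{i=1}^N \eta_i \varphi s$. It suffices to show each summand $\eta_i \varphi s$ lies in $\dom{D_\minn}$.

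Fix $i$ and work in the chart $U_i$, where $D$ becomes a system of scalar differential operators with smooth coefficients, and $\eta_i \varphi s$ is an element of $H^k$ with compact support in $U_i$. Let $\chi_\epsilon$ be a standard Friedrichs mollifier on the coordinate patch (extended by zero), and set $s_{i,\epsilon} \coloneqq \chi_\epsilon * (\eta_i\varphi s)$. For $\epsilon$ small enough, $s_{i,\epsilon} \in \bseccc(M,E)$ has support in $U_i \cap \interior M$, and $s_{i,\epsilon} \to \eta_i \varphi s$ in $L^2$ as $\epsilon \to 0$. The classical Friedrichs commutator lemma (see, \eg, \cite[Lemma~17.1.5]{Taylor2011}-style arguments) gives that the commutator of $D$ with mollification tends to zero in $L^2$, so $D s_{i,\epsilon} = \chi_\epsilon * D(\eta_i \varphi s) + [\,D,\chi_\epsilon*\,](\eta_i\varphi s) \to D(\eta_i\varphi s)$ in $L^2$. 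Hence $\eta_i\varphi s \in \dom{D_\minn}$, and summing over $i$ gives $\varphi s \in \dom{D_\minn}$.

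The only nontrivial step is the Friedrichs mollifier estimate in local coordinates, but since $D$ has smooth coefficients in the chart and $\eta_i \varphi s$ has compact support inside the chart, this is the classical Friedrichs lemma applied componentwise; no manifold-level subtlety intervenes because $\supp\varphi$ stays away from $\partial M$.
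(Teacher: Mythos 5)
Your proof is correct, but it takes a genuinely different route from the paper's. The paper invokes \cref{domain_of_extension_in_local_sobolev_space} only through its corollary that $D_\maxx$ has a core of sections smooth on $\interior M$: it picks such a sequence $s_k \to s$, observes $\varphi s_k \in \bseccc(M,E) \subseteq \dom{D_\minn}$, and then shows $(\varphi s_k)_k$ is Cauchy in $\dom{D_\minn}$ by bounding $\|[D,\varphi](s_k - s_j)\|$ via the interior elliptic estimate $\|u\|_{H^{d-1}(U)} \lesssim \|Du\|_{L^2(V)} + \|u\|_{L^2(V)}$ on relatively compact $U \subset\subset V$. You instead use the same lemma through its stronger output — $s \in H^k_\loc$ — to conclude $\varphi s \in H^k$ with compact support in $\interior M$, and then approximate by Friedrichs mollification in coordinate patches after a partition-of-unity split. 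Both arguments are ultimately powered by interior elliptic regularity; the paper's version stays coordinate-free and avoids explicit mollification at the cost of a Cauchy-sequence argument with commutator estimates, while yours is more hands-on and elementary once $H^k_\loc$ membership is in hand. One minor remark: invoking the Friedrichs commutator lemma is more than you need — since you have already placed $\eta_i \varphi s$ in $H^k$ with compact support in the chart, $\chi_\epsilon * (\eta_i\varphi s) \to \eta_i\varphi s$ directly in $H^k$, and the continuity of the $k$\textsuperscript{th}-order operator $D$ from compactly supported $H^k$ to $L^2$ on the chart gives convergence of $D s_{i,\epsilon}$ in $L^2$ without any commutator estimate. (The full Friedrichs commutator argument for a $k$\textsuperscript{th}-order operator requires some Sobolev regularity on $u$ anyway when $k > 1$, so it genuinely needs the $H^k_\loc$ step you already did, whereas in the first-order case it works for plain $L^2$ sections.)
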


For first order operators that are not necessary elliptic, \cref{multiplication_by_compact_support_function_maps_to_strong_domain} can be found in \cite{Grieser2002}.
This is basically Friedrich's lemma.
The present version for elliptic operators of higher order makes use of the elliptic estimates:

\begin{proof}[Proof of \cref{multiplication_by_compact_support_function_maps_to_strong_domain}]
 By \cref{domain_of_extension_in_local_sobolev_space}, $D_\maxx$ has a core consisting of sections which are smooth on $\interior M$.
 Let $s \in \dom{D_\maxx}$, and choose $s_k \in \bsec(\interior M,E) \cap \dom{D_\maxx}$ with $s_k \to s$ in $\dom{D_\maxx}$.
 Then $\varphi s_k \in \bseccc(M,E) \subseteq \dom{D_\minn}$ and $\varphi s_k \to \varphi s$ in $L^2(M,E)$.
 Moreover,
 \[ \|D_\minn(\varphi s_k) - D_\minn(\varphi s_j)\| \leq \|\varphi (Ds_k - Ds_j)\| + \|[D,\varphi](s_k - s_j)\|. \]
 Choose relatively compact open subsets $U \subset\subset V \subseteq \interior M$ such that $\supp(\varphi) \subseteq U$.
 If $d$ is the order of $D$, then $[D,\varphi]$ has order $d-1$ and vanishes outside of $\supp(\varphi)$, hence there is a constant $C > 0$ such that $\|[D,\varphi]u\| \leq C \|u\|_{H^{d-1}(U,E)}$ for all $u \in \bseccc(U,E)$.
 By the elliptic estimates, we therefore have
 \[ \|D_\minn(\varphi s_k) - D_\minn(\varphi s_j)\| \leq \|\varphi\|_{L^\infty} \|Ds_k - Ds_j\| + \widetilde C \big(\|Ds_k - Ds_j\| + \|s_k-s_j\|\big) \]
 for some constant $\widetilde C > 0$ and all $j,k \geq 1$.
 We conclude that $(\varphi s_k)_{k \in \N}$ is Cauchy in $\dom{D_\minn}$, hence convergent, and the limit must agree with $\varphi s$ by the convergence in $L^2(M,E)$.
\end{proof}

\subsection{Density of sections with compact support}
\label{sec:essential_self-adjointness}

In this \namecref{sec:essential_self-adjointness}, we want to study whether sections with compact support are dense in $\dom{A}$ for the graph norm $s \mapsto (\|s\|^2 + \|As\|^2)^{1/2}$, with $A$ a closed extension of a differential operator.
Put differently: does $A$ have a core consisting of sections with compact support?
The results will be for \emph{first order} differential operators. 

By a \defemph{complete Riemannian manifold} $(M,g)$ we mean a connected manifold $M$, possibly with boundary, together with Riemannian metric $g$ such that the Riemannian distance $d_g$ turns $M$ into a complete metric space.
A generalization of the theorem of Hopf--Rinow says that $(M,d_g)$ is complete if and only if its compact subsets are exactly the closed and bounded ones, see \cite[p.~9]{Gromov1999}.
The following \namecref{characterization_of_complete_manifolds} is a standard characterization of complete Riemannian manifolds, and is also true for manifolds with boundary.
It can easily be proved by adapting the arguments from \cite[Lemma~12.1]{Demailly2002}.

\begin{lem}
 \label{characterization_of_complete_manifolds}
 Let $(M,g)$ be a connected Riemannian manifold, possibly with boundary.
 Then $(M,g)$ is complete if and only if
  there is a sequence $(\chi_k)_{k \in \N}$ of functions in $C^\infty_c(M,[0,1])$ with $(\chi_{k+1})|_{\supp(\chi_k)} = 1$ and $|d\chi_k|\leq 2^{-k}$ for all $k \in \N$, and such that $(\supp(\chi_k))_{k \in \N}$ is a compact exhaustion of $M$.
\end{lem}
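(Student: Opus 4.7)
The plan is to prove the two directions separately; the easy direction rests on integrating $d\chi_k$ along curves, while the real work lies in constructing the cutoffs from a smoothed distance function in the converse.

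For the ``$(\chi_k)$ exist $\Rightarrow$ complete'' direction, I would invoke the Hopf--Rinow-type characterization cited just above the statement and show that every metric ball $B(x_0,R)$ has compact closure. Since the supports exhaust $M$, there is some $k_0$ with $x_0 \in \supp(\chi_{k_0})$, and iterating the relation $\chi_{k+1}|_{\supp(\chi_k)}=1$ yields $\chi_k(x_0)=1$ for all $k>k_0$. For any $y \in B(x_0,R)$ I pick a piecewise smooth path $\gamma$ from $x_0$ to $y$ of length $<R$ and estimate
\[
 1-\chi_k(y) = \chi_k(x_0)-\chi_k(y) \leq \int_0^1 |d\chi_k(\gamma'(t))|\,dt \leq 2^{-k}R,
\]
so for $k>k_0$ with $2^{-k}R<1$ one has $\chi_k(y)>0$ and hence $y \in \supp(\chi_k)$. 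This gives $B(x_0,R) \subseteq \supp(\chi_k)$, which is compact.

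For the converse, assuming $(M,g)$ complete, the plan is to build the $\chi_k$ by composing a fixed one-variable cutoff with a smooth proper function of bounded gradient. By the cited Hopf--Rinow characterization, $\rho(x) \coloneqq d_g(x,x_0)$ is proper and $1$-Lipschitz. I would then regularize $\rho$ to a smooth $\widetilde\rho \in C^\infty(M)$ satisfying $|\widetilde\rho-\rho|\leq 1$ and $|d\widetilde\rho|\leq 2$ (still proper). Fixing $\phi \in C^\infty(\R,[0,1])$ equal to $1$ on $(-\infty,1]$, to $0$ on $[2,\infty)$, with $|\phi'|\leq 2$, I set
\[ \chi_k(x) \coloneqq \phi\!\left(\widetilde\rho(x)/2^{k+2}\right). \]
The chain rule gives $|d\chi_k| \leq 2\cdot 2\cdot 2^{-(k+2)}=2^{-k}$. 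Since $\supp(\chi_k) \subseteq \{\widetilde\rho \leq 2^{k+3}\}$ and $\chi_{k+1}\equiv 1$ on $\{\widetilde\rho\leq 2^{k+3}\}$, the nesting property holds; and properness of $\widetilde\rho$ makes every $\supp(\chi_k)$ compact with union equal to $M$.

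The hard part will be the controlled smoothing of $\rho$ on a manifold with possibly nonempty boundary and without curvature bounds, where $\rho$ is only continuous. The standard workaround is to cover $M$ by a locally finite family of coordinate charts of sufficiently small radius (so that $g$ is uniformly close to Euclidean on each), mollify $\rho$ chart by chart, and glue with a subordinate partition of unity; a constant-factor loss in the gradient bound of $\widetilde\rho$ is harmless because the scaling by $2^{k+2}$ absorbs it in the end. Everything else is essentially an algebraic verification.
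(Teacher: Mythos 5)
Your proof is correct and follows essentially the same route as the argument the paper cites (Demailly's Lemma~12.1): integrate $|d\chi_k|$ along paths for the forward direction, and compose a one-variable cutoff with a regularized distance function for the converse. The only detail worth spelling out in the smoothing step is that, after gluing the chart-wise mollifications $\rho_i$ of $\rho$ with a partition of unity $\{\psi_i\}$, one should use $\sum_i d\psi_i = 0$ to rewrite the gluing contribution to $d\widetilde\rho$ as $\sum_i (\rho_i - \rho)\,d\psi_i$, which is then made pointwise small by shrinking the mollification parameters chart by chart; this gives $|d\widetilde\rho| \leq 1 + \varepsilon$ directly, rather than relying on an a priori ``constant-factor loss''.
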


The central condition on $A$ will be the validity of the following Leibniz rule.
For $s \in \bseccc(M,E)$, the equation~\cref{eq:leibniz_rule_for_extension} is just the definition of the principal symbol of $D$.

\begin{defn}
 \label{def:leibniz_rule_for_extension}
 Let $D \colon \Gamma(M,E) \to \Gamma(M,F)$ be a first order differential operator.
 We say that an extension $A$ of $D$ \defemph{satisfies the Leibniz rule (with respect to $C^\infty_c(M)$)} if $fs \in \dom{A}$ and
 \begin{equation}
  \label{eq:leibniz_rule_for_extension}
  A(fs) = fAs + \symb{D}(df)s
 \end{equation}
 for all $s \in \dom{A}$ and $f \in C^\infty_c(M)$.
 (Note that the support of $f$ may intersect the boundary.)
\end{defn} 

\begin{thm}
 \label{bounded_symbol_compact_supported_sections_dense}
 Let $A$ be a closed extension of a first order differential operator $D$ satisfying the Leibniz rule \cref{eq:leibniz_rule_for_extension}.
 Suppose that $(M,g)$ is complete and that the principal symbol of $D$ satisfies
 \begin{equation}
  \label{eq:bounded_symbol_compact_supported_sections_dense}
  |\symb{D}(\gamma)| \leq C|\gamma|
 \end{equation}
 for some constant $C > 0$ and all $\gamma \in T^*M$.
 If $W \subseteq \dom{A}$ is a core for $A$,
 then $\{\varphi s : \varphi \in C^\infty_c(M),\, s \in W\}$ is also a core for $A$.
 In particular, the compactly supported elements are dense in $\dom{A}$. \end{thm}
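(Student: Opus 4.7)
My plan is to combine the completeness-exhaustion cutoffs provided by \cref{characterization_of_complete_manifolds} with the core property of $W$ in a two-step approximation scheme: first I would approximate an arbitrary $s \in \dom{A}$ in the graph norm of $A$ by the compactly supported sections $\chi_k s$, and then I would approximate each $\chi_k s$ by sections of the form $\chi_k s_n$ with $s_n \in W$.

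For the first step, I would take the sequence $(\chi_k)_{k \in \N}$ from \cref{characterization_of_complete_manifolds} and invoke the Leibniz rule \cref{eq:leibniz_rule_for_extension} to conclude that $\chi_k s \in \dom{A}$ with
\begin{equation*}
A(\chi_k s) = \chi_k A s + \symb{D}(d\chi_k) s.
\end{equation*}
Since $0 \leq \chi_k \leq 1$ and $\chi_k \to 1$ pointwise, dominated convergence yields $\chi_k s \to s$ and $\chi_k A s \to A s$ in $L^2$. The bounded-symbol hypothesis \cref{eq:bounded_symbol_compact_supported_sections_dense} combined with $|d\chi_k| \leq 2^{-k}$ produces the pointwise estimate $|\symb{D}(d\chi_k)s| \leq C 2^{-k} |s|$, which tends to zero in $L^2$, again by dominated convergence. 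Hence $\chi_k s \to s$ in the graph norm of $A$, with the by-product that compactly supported sections are dense in $\dom{A}$.

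For the second step, given $s \in \dom{A}$ I would pick a sequence $s_n \in W$ with $s_n \to s$ in the graph norm of $A$, which is possible since $W$ is a core. For every fixed $k$, the Leibniz rule gives $A(\chi_k s_n) = \chi_k A s_n + \symb{D}(d\chi_k) s_n$, and since $\chi_k$ and $\symb{D}(d\chi_k)$ are uniformly bounded (in $n$) multiplication operators, we obtain $\chi_k s_n \to \chi_k s$ in the graph norm as $n \to \infty$. A standard diagonal extraction then produces a sequence of the form $\chi_{k_m} s_{n_m}$ with $s_{n_m} \in W$ that converges to $s$ in the graph norm.

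The main obstacle is engineering the cutoff step so that the unwanted term $\symb{D}(d\chi_k) s$ vanishes in the $L^2$ limit. This is exactly the point at which the completeness of $g$ (supplying cutoffs with arbitrarily small gradient) and the growth bound \cref{eq:bounded_symbol_compact_supported_sections_dense} on the principal symbol must work in tandem; dropping either hypothesis would break the scheme.
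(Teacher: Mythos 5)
Your proof is correct and follows essentially the same approach as the paper: the paper also uses the cutoffs $\chi_k$ from \cref{characterization_of_complete_manifolds}, the Leibniz rule, and the symbol bound, but rather than running your two-step approximation followed by diagonal extraction, it directly estimates $\|A(\chi_k s_k) - As\|$ with the same index $k$ on both $\chi_k$ and the approximating sequence $s_k \in W$, which is precisely the diagonal sequence your scheme produces.
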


\begin{proof}
 We slightly modify the proof of \cite[Theorem~3.3]{Baer2012}, where the statement is shown for $D_\maxx$, \cf \cref{leibniz_formula_for_weak_and_strong_extensions} below.
 Let $s \in \dom{A}$.
 Since $W$ is a core for $A$, we find $s_k \in W$ with $s_k \to s$ in $\dom{A}$.
 By the completeness of $(M,g)$, there exists a sequence $(\chi_k)_{k \in \N}$ of functions in $C^\infty_c(M,[0,1])$ with $(\chi_{k+1})|_{\supp(\chi_k)} = 1$, and $|d\chi_k| \leq 2^{-k}$ for all $k \in \N$, see \cref{characterization_of_complete_manifolds}.
 Then $\chi_k s_k$ has compact support and is an element of $\dom{A}$ by assumption.
 By the dominated convergence theorem, $\|\chi_k s - s\| \to 0$ and $\|\chi_k As - As\| \to 0$.
 It follows that $\chi_k s_k \to s$ in $L^2(M,E)$, and
 \begin{multline*}
  \|A(\chi_k s_k) - As \| \\
  \begin{aligned}
   & \leq \|A(\chi_k s_k) - A(\chi_k s)\| + \|A(\chi_k s) - As)\| \\
   & \leq \|\chi_k A(s_k - s)\| + \|\symb{D}(d\chi_k)(s_k - s)\| + \|\chi_k As - As\| + \|\symb{D}(d\chi_k)s\| \\
   & \leq \|As_k - As\| + \frac{C}{2^k}\|s_k - s\| + \|\chi_k As - As\| + \frac{C}{2^k} \|s\|
  \end{aligned}
 \end{multline*}
 also converges to zero as $k \to \infty$.
 Therefore, $\chi_k s_k \to s$ in $\dom{A}$, so $\{\varphi s : \varphi \in C^\infty_c(M), s \in W\} \subseteq \{s \in \dom{A} : \supp(s) \text{ compact}\}$ is a core for $A$.
\end{proof}

\begin{rem}
 A more sophisticated condition is given in \cite[Theorem~1.2]{Baer2012}:
 if $M$ is a connected Riemannian manifold which admits a complete Riemannian metric $h$ such that
 \[ |\symb{D}(\gamma)| \leq C(\dist[d_h]{x}{\partial M}) |\gamma|_h \]
 for all $x\in M$ and $\gamma \in T^*_xM$, with $C \colon [0,\infty) \to \R$ a positive, continuous, and monotonically increasing function satisfying
 $\int_0^\infty \tfrac{1}{C(r)}\,dr = \infty$,
 then compactly supported elements of $\dom{D_\maxx}$ are a core for $D_\maxx$.
 After a conformal change of metric, this case is reduced to \cref{eq:bounded_symbol_compact_supported_sections_dense}.
\end{rem}

\begin{example}
 \label{ex:dirac_operator_squared_leibniz_rule}
 If $D$ is a formally self-adjoint differential operator of Dirac type, then
 \[
  |\symb{D}(\gamma)|^2 = |\symb{D}(\gamma)^*\symb{D}(\gamma)| = |\symb{D^2}(\gamma)| = |\gamma|^2
 \]
 for all $\gamma \in T^*M$, hence \cref{eq:bounded_symbol_compact_supported_sections_dense} is satisfied.
\end{example}

Of course, the value of \cref{bounded_symbol_compact_supported_sections_dense} depends on the number of extensions of $D$ for which the Leibniz rule can be established.
The next \namecref{leibniz_formula_for_closure_adjoint} gives us something to work with:

\begin{prop}
 \label{leibniz_formula_for_closure_adjoint}
 Let $A$ be an extension of a first order differential operators $D$ with $D_\cc \subseteq A \subseteq D_\maxx$ and satisfying the Leibniz rule~\cref{eq:leibniz_rule_for_extension}.
 Then its closure $\ol A$ satisfies the Leibniz rule, and the extension $A^*$ of $D^\fadj$ also has this property, \ie $fs \in \dom{A^*}$ and
 \[ A^*(fs) = f A^*s + \symb{D^\fadj}(df)s \]
 for all $s \in \dom{A^*}$ and $f \in C^\infty_c(M)$.
\end{prop}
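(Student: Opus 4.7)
The plan is to treat the two statements of the proposition by distinct arguments: an $L^2$ approximation for the closure $\ol A$, and a direct duality computation for $A^*$. For $\ol A$, I would take $s \in \dom{\ol A}$ and choose $s_k \in \dom{A}$ with $s_k \to s$ and $As_k \to \ol A s$ in $L^2$. The assumed Leibniz rule for $A$ then supplies $fs_k \in \dom{A}$ with $A(fs_k) = f\,As_k + \symb{D}(df)\,s_k$. Since $f \in C^\infty_c(M)$ is bounded with compact support and $\symb{D}(df)$ is a continuous bundle morphism supported in $\supp f$, both $u \mapsto fu$ and $u \mapsto \symb{D}(df)\,u$ are bounded operators on $L^2$. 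Hence $fs_k \to fs$ and $A(fs_k) \to f\,\ol A s + \symb{D}(df)\,s$ in $L^2$, and the closedness of $\ol A$ forces $fs \in \dom{\ol A}$ together with $\ol A(fs) = f\,\ol A s + \symb{D}(df)\,s$.

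For $A^*$, I would fix $s \in \dom{A^*}$ and test against an arbitrary $t \in \dom A$. Applying the Leibniz rule for $A$ to the pair $(\ol f, t)$ (which is legitimate since $\ol f \in C^\infty_c(M)$) gives $\ol f\, t \in \dom A$ and $\ol f\,At = A(\ol f t) - \symb{D}(d\ol f)\,t$. Pairing against $s$ and invoking $s \in \dom{A^*}$ yields
\[
  \llangle At, fs \rrangle = \llangle \ol f\,At, s \rrangle = \llangle A(\ol f t), s\rrangle - \llangle \symb{D}(d\ol f)\,t, s \rrangle = \llangle t,\, f A^*s - \symb{D}(d\ol f)^*\,s \rrangle.
\]
Since this exhibits $t \mapsto \llangle At, fs\rrangle$ as an $L^2$-bounded functional on $\dom A$, the definition of the Hilbert-space adjoint delivers $fs \in \dom{A^*}$ with $A^*(fs) = fA^*s - \symb{D}(d\ol f)^*\,s$.

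To match the claimed formula, it remains to identify $\symb{D^\fadj}(df)$ with $-\symb{D}(d\ol f)^*$. For real-valued $f$ this is exactly the sign convention $\symb{D^\fadj}(x,\gamma) = -\symb{D}(x,\gamma)^*$ for first-order operators recorded at the start of \cref{sec:differential_operators}, since there $d\ol f = df$; the complex-valued case reduces to the real one by splitting $f = f_1 + if_2$ and combining the $\Cplx$-linearity of $\symb{D^\fadj}$ in the cotangent variable with the $\Cplx$-antilinearity of the pointwise Hermitian adjoint in the scalar. I expect the only genuine hurdle to be this last bit of sign and linearity bookkeeping; the rest is a direct application of the Leibniz hypothesis on $A$, the $L^2$-boundedness of multiplication by $f$ and by $\symb{D}(df)$, and the defining property of the adjoint.
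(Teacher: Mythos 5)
Your proof is correct, but it organizes the argument differently from the paper. The paper establishes the $A^*$ statement first, working with \emph{real-valued} $f$ (so $\ol f = f$ and no conjugation bookkeeping is needed), and then obtains the $\ol A$ statement at no cost from the identity $\ol A = A^{**}$: because $A^*$ extends $D^\fadj$ with $(D^\fadj)_\cc \subseteq A^* \subseteq (D^\fadj)_\maxx$, one may simply apply the $A^*$ result a second time, with $D$ replaced by $D^\fadj$. You instead prove the $\ol A$ statement directly by an $L^2$ approximation argument, exploiting closedness of $\ol A$ and the $L^2$-boundedness of multiplication by $f$ and by $\symb D(df)$, and then prove the $A^*$ statement by essentially the same duality computation as the paper but for complex-valued $f$, which introduces the conjugate $\ol f$ and forces the sign identification $-\symb D(d\ol f)^* = \symb{D^\fadj}(df)$. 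Both routes are valid; the paper's is shorter once one notices $\ol A = A^{**}$, while yours is more self-contained (it avoids that identity) and has the side benefit of making the complex-valued version of the Leibniz rule explicit. One simplification you could adopt from the paper: it suffices to take $f$ real in the duality computation, since the real case implies the complex case by $\Cplx$-linearity of $\gamma \mapsto \symb{D^\fadj}(\gamma)$, and this removes the conjugation bookkeeping entirely.
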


\begin{proof}
 Let $s \in \dom{A^*}$, $f \in C^\infty_c(M,\R)$, and $t \in \dom{A}$.
 Then $ft \in \dom{A}$ and $A(ft) = fAt + \symb{D}(df)t$ by assumption, and
 \begin{multline*}
  \llangle fs, At \rrangle
  = \llangle s, f At \rrangle
  = \llangle s, A(f t) - \symb{D}(df) t\rrangle = \\
  = \llangle A^* s, f t \rrangle + \llangle \symb{D^\fadj}(df)s, t \rrangle
  = \llangle f A^* s + \symb{D^\fadj}(df)s, t \rrangle.
 \end{multline*}
 This implies $fs \in \dom{A^*}$ and $A^*(fs) = fA^* s + \symb{D^\fadj}(df)s$.
 The closure of $A$ is given by $\ol A = A^{**}$, so the claim for $\ol A$ follows immediately.
\end{proof}

\begin{cor}
 \label{leibniz_formula_for_weak_and_strong_extensions}
 Let $D$ be a first order differential operator.
 Then $D_\minn$ and $D_\maxx$ satisfy the Leibniz rule~\cref{eq:leibniz_rule_for_extension}.
\end{cor}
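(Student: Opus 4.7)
The plan is to deduce this corollary as a direct application of \cref{leibniz_formula_for_closure_adjoint}, applied twice, after verifying that the ``base case'' $D_\cc$ itself already satisfies the Leibniz rule.

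First I would observe that $D_\cc$ satisfies~\cref{eq:leibniz_rule_for_extension}. Indeed, for $s \in \bseccc(M,E)$ and $f \in C^\infty_c(M)$ we have $fs \in \bseccc(M,E) \subseteq \dom{D_\cc}$, and the identity $D(fs) = fDs + \symb{D}(df)s$ is nothing but the definition of the principal symbol of a first order differential operator. Note that trivially $D_\cc \subseteq D_\cc \subseteq D_\maxx$, so $D_\cc$ meets the hypotheses of \cref{leibniz_formula_for_closure_adjoint}.

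Next, for $D_\minn$, I would simply invoke the closure part of \cref{leibniz_formula_for_closure_adjoint}: the closure $\overline{D_\cc}$ satisfies the Leibniz rule, and by definition $D_\minn = \overline{D_\cc}$.

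Finally, for $D_\maxx$, I would apply \cref{leibniz_formula_for_closure_adjoint} with $D$ replaced by $D^\fadj$, taking $A = (D^\fadj)_\cc$. By the base case above (applied to $D^\fadj$ in place of $D$), this $A$ satisfies the Leibniz rule for $D^\fadj$ and fits between $(D^\fadj)_\cc$ and $(D^\fadj)_\maxx$. The adjoint statement in \cref{leibniz_formula_for_closure_adjoint} then gives that $((D^\fadj)_\cc)^*$ satisfies the Leibniz rule with symbol $\symb{(D^\fadj)^\fadj}(df) = \symb{D}(df)$. Since $D_\maxx = (D^\fadj)_\cc^*$ by definition of the weak extension, this yields the claim. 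There is no real obstacle; the only thing to check is the bookkeeping that $(D^\fadj)^\fadj = D$ so that the symbol appearing in the Leibniz rule is indeed $\symb{D}(df)$ and not $\symb{D^\fadj}(df)$.
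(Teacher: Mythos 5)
Your proposal is correct and is essentially the paper's own argument: the paper likewise notes that $D_\cc$ and $(D^\fadj)_\cc$ trivially satisfy the Leibniz rule (by definition of the principal symbol) and then applies \cref{leibniz_formula_for_closure_adjoint} to conclude for $D_\minn = \ol{D_\cc}$ and $D_\maxx = ((D^\fadj)_\cc)^*$. Your extra bookkeeping that $(D^\fadj)^\fadj = D$, so the symbol in the resulting Leibniz rule is indeed $\symb{D}(df)$, is a correct and worthwhile check.
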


\begin{proof}
 Clearly, $D_\cc$ and $(D^\fadj)_\cc$ both satisfy the Leibniz rule, so \cref{leibniz_formula_for_closure_adjoint} implies that $D_\minn = \ol{D_\cc}$ and $D_\maxx = ((D^\fadj)_\cc)^*$ also have this property.
\end{proof}

\begin{rem}
 \label{rem:leibniz_rule_for_lipschitz_functions}
 The proof of \cref{leibniz_formula_for_closure_adjoint} also works if we replace $C^\infty_c(M)$ by the space of bounded smooth functions $f \colon M \to \R$ such that $x \mapsto |\symb{D}(x,df(x))|$ is bounded on $M$.
 If $D$ satisfies the symbol bound \cref{eq:bounded_symbol_compact_supported_sections_dense}, then bounded smooth Lipschitz functions have this property.
 In particular, $D_\maxx$ and $D_\minn$ satisfy the Leibniz rule with respect to these functions.
\end{rem}

\begin{example}
 \label{ex:hilbert_complexes_leibniz_rule}
 Let $(E_\bullet,d)$ be an \defemph{elliptic complex} of first order differential operators, \ie the associated symbol sequence is exact, meaning $\img{\symb{d_i}(\gamma)} = \ker(\symb{d_{i+1}}(\gamma))$ for all nonzero $\gamma \in T^*M$, see for instance \cite{Taylor2011a}.
 The operator $d_\maxx + d_\maxx^* = d_\maxx + (d^\fadj)_\minn$ is a self-adjoint extension of $d+d^\fadj$, see the arguments in \cite[Proposition~2.3]{Goldshtein2011}, with domain $\dom{d_\maxx} \cap \dom{d_\maxx^*} = \dom{d_\maxx} \cap \dom{(d^\fadj)_\minn}$.
 It is easy to see that the differential operator $d+d^\fadj$ is also of first order, with $\symb{d+d^\fadj} = \symb{d} + \symb{d^\fadj}$.
 Together with \cref{leibniz_formula_for_weak_and_strong_extensions}, this immediately gives that $d_\maxx + d_\maxx^*$ satisfies the Leibniz rule.
 Similarly, one shows that this is also true for $d_\minn + d_\minn^*$.
\end{example}

 \section{The essential spectrum of self-adjoint elliptic differential operators}
\label{sec:spec_elliptic_general}

In this \namecref{sec:spec_elliptic_general} we consider (nonnegative) self-adjoint extensions $A$ of general elliptic differential operators on a Riemannian manifold $M$, possibly having a boundary.
\Cref{sec:decomposition_principle} will first set up the notation used throughout this \namecref{sec:spec_elliptic_general}, the highlight of which is the \defemph{decomposition principle}, which states that one can restrict $A$ to complements of compact subsets of $\interior M$ without changing the essential spectrum.
In \cref{sec:bottom_of_essspec}, the bottom of the essential spectrum of such operators is considered.
One of the key results there is \cref{limit_of_infimum_of_spectrum}, a generalization of a theorem of Persson~\cite{Persson1960}, and it states that $\inf\essspec{A}$ is the limit of the net $K \mapsto \inf\spec{A_{M\setminus K}}$, where $K$ runs through the compact subsets of $\interior M$, directed by inclusion.
The results in this \namecref{sec:spec_elliptic_general} are not fundamentally new, but we have taken care to keep them as general as possible.
For instance, we shall not make the often used assumption for $A$ to have a core of smooth sections with compact support (although this will be satisfied in our applications).

\subsection{The decomposition principle}
\label{sec:decomposition_principle}

Let $(M,g)$ be a Riemannian manifold with (possibly empty) boundary $\partial M$, and let $E \to M$ be a (complex) Hermitian vector bundle.
Suppose $D \colon \bsec(M,E) \to \bsec(M,E)$ is a formally self-adjoint differential operator of order at least one, and let
\[ A \colon \dom{A} \subseteq L^2(M,E) \to L^2(M,E) \]
be a lower semibounded self-adjoint extension of $D$, by which we mean $D_\cc \subseteq A$.
We denote by $Q_A$ the quadratic form associated to $A$, see for instance \cite{Schmuedgen2012}.
The space $\dom{A}$ is complete under the norm $s \mapsto (\|s\|^2 + \|As\|^2)^{1/2}$, and $\dom{Q_A}$ is a Hilbert space when equipped with $s \mapsto (\|s\|^2 + Q_A(s,s))^{1/2}$.
In order to formulate the results of the following sections, it will be convenient to restrict $A$ to open subsets of $M$:

\begin{lem}
 \label{localized_form_is_closable}
 Let $U \subseteq M$ be an open subset.
 Then the quadratic form $\widetilde Q_{A,U}$ on $L^2(U,E)$ with $\dom{\widetilde Q_{A,U}} \coloneqq \{s|_U : s \in \dom{Q_A} \text{ and } \supp(s) \subseteq U\}$ and $\widetilde Q_{A,U}(s,s) \coloneqq Q_A(s_0,s_0)$ for $s \in \dom{\widetilde Q_{A,U}}$ is closable, where $s_0 \in L^2(M,E)$ denotes the extension of $s$ by zero and $\supp(s)$ is to be taken in the measure senses.
\end{lem}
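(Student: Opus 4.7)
The plan is to reduce closability of $\widetilde Q_{A,U}$ to the closedness of the form $Q_A$ on $L^2(M,E)$. Since $A$ is lower semibounded and self-adjoint, $Q_A$ is a closed form, and by adding a constant to $A$ (which does not affect closability of any of the forms involved) I may assume $A \geq 0$, so that $Q_A$ is nonnegative. The key point I will use is that zero-extension gives an isometric embedding of $\dom{\widetilde Q_{A,U}}$, equipped with the form norm $s \mapsto (\|s\|_{L^2(U,E)}^2 + \widetilde Q_{A,U}(s,s))^{1/2}$, into $\dom{Q_A}$ with its form norm.

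First I would observe that the domain of $\widetilde Q_{A,U}$ is well-defined and linear: if $s_1,s_2 \in \dom{Q_A}$ have supports contained in $U$ (in the measure sense), then so does any linear combination, and their restrictions to $U$ still extend back by zero to the original elements modulo null sets. This makes $s \mapsto s_0$ a well-defined linear map $\dom{\widetilde Q_{A,U}} \to \dom{Q_A}$ that preserves both the $L^2$ norm and the form value, by the very definition of $\widetilde Q_{A,U}$.

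For the actual closability argument, suppose $(s_k)_{k \in \N}$ is a sequence in $\dom{\widetilde Q_{A,U}}$ with $s_k \to 0$ in $L^2(U,E)$ and which is Cauchy with respect to $\widetilde Q_{A,U}$. Then the zero-extended sequence $(s_{k,0})_{k \in \N}$ is contained in $\dom{Q_A}$, converges to $0$ in $L^2(M,E)$, and is Cauchy in the $Q_A$-form norm thanks to the isometry property noted above (applied to differences $s_k-s_j$, whose zero extensions coincide with $s_{k,0}-s_{j,0}$). Because $Q_A$ is closed, this forces $s_{k,0} \to 0$ in the form norm of $Q_A$, and in particular $Q_A(s_{k,0},s_{k,0}) \to 0$, which translates back to $\widetilde Q_{A,U}(s_k,s_k) \to 0$. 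This is exactly the closability criterion.

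There is no serious obstacle here; the only subtlety worth spelling out is the measure-theoretic bookkeeping that guarantees $(s_k|_U)_0 = s_k$ almost everywhere on $M$ for $s_k \in \dom{Q_A}$ with support in $U$, so that the zero-extension of a restriction recovers the original element and the form values truly agree. Once this is verified, closability is a direct consequence of the closedness of $Q_A$.
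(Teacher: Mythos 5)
Your proposal is correct and follows essentially the same route as the paper: zero-extend to get a sequence that is Cauchy in the form norm of $Q_A$, invoke closedness of $Q_A$ together with $L^2$-convergence to $0$ to identify the limit as $0$, and conclude that $\widetilde Q_{A,U}(s_k,s_k)\to 0$. The normalization to $A\geq 0$ at the outset is a harmless simplification not made explicitly in the paper, which instead phrases the Cauchy condition with absolute values, but the argument is otherwise identical.
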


\begin{proof}
 We need to show that if $u_k \in \dom{\widetilde Q_{A,U}}$ is a sequence with $u_k \to 0$ in $L^2(U,E)$ and such that for every $\varepsilon > 0$ there is $N \in \N$ with $|\widetilde Q_{A,U}(u_k - u_j,u_k - u_j)| \leq \varepsilon$ for $j,k \geq N$, then also $\widetilde Q_{A,U}(u_k,u_k) \to 0$ as $k \to \infty$, see \cite[Proposition~10.3]{Schmuedgen2012}.
 These assumptions on $u_k$ imply that $((u_k)_0)_{k \in \N}$ is Cauchy in $\dom{Q_A}$.
 Since $Q_A$ is closed, there is $t \in \dom{Q_A}$ with $(u_k)_0 \to t$ in $\dom{Q_A}$, and as also $(u_k)_0 \to 0$ in $L^2(M,E)$ by assumption, we have $t = 0$.
 Now
 \[ \widetilde Q_{A,U}(u_k,u_k) = Q_A((u_k)_0,(u_k)_0) \to Q_A(t,t) = 0 \]
 as $k \to \infty$, so $\widetilde Q_{A,U}$ is closable.
\end{proof}

\begin{defn}
 \label{def:localized_differential_operator}
 For $U \subseteq M$ an open subset, we define the quadratic form $Q_{A,U}$ as the closure of the quadratic form $\widetilde Q_{A,U}$ from \cref{localized_form_is_closable}.
 The self-adjoint operator associated to $Q_{A,U}$ is denoted by $A_U$.
\end{defn}

Note that the open subset $U$ in \cref{def:localized_differential_operator} is allowed to intersect $\partial M$.
We think of $A_U$ as being obtained by putting Dirichlet boundary conditions on $\partial U \setminus \partial M$, and keeping the original boundary conditions on $\partial M \cap U$.
Since $\dom{\widetilde Q_{A,U}}$ is dense in $\dom{Q_{A,U}}$, the operator $A_U$ is given by
\begin{multline}
 \label{eq:domain_of_localized_operator}
 \dom{A_U} = \big\{s \in \dom{Q_{A,U}} : \text{there is } u_s \in L^2(U,E) \text{ such that } \\
 Q_{A,U}(s,t)
 = \llangle u_s,t \rrangle_{L^2(U,E)}
 \text{ for all } t \in \dom{Q_{A}} \text{ with } \supp(t) \subseteq U\big\},
\end{multline}
and $A_U s \coloneqq u_s$ for $s \in \dom{A_U}$.
It is not hard to see that $\{s|_U : s \in \dom{A} \text{ and } \supp(s) \subseteq U\}$ is contained in $\dom{A_U}$ and that $A_U(s|_U) = (As)|_U$ holds for all $s$ in this set.

\begin{lem}
 \label{extension_by_zero_belongs_to_domain}
 Let $U,V \subseteq M$ be open subsets with $U \subseteq V$.
 If $u \in \dom{Q_{A,U}}$, then $u_0 \in L^2(V,E)$ belongs to $\dom{Q_{A,V}}$, and $Q_{A,U}(u,v) = Q_{A,V}(u_0,v_0)$ for all $u,v \in \dom{Q_{A,U}}$.
 In particular, $\inf\spec{A_U} \geq \inf\spec{A_V}$.
\end{lem}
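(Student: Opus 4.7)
The plan is to prove the statement about $u_0$ on the dense subset $\dom{\widetilde Q_{A,U}}$ by transporting representatives through $\dom{Q_A}$, then extend by density using the closedness of $Q_{A,V}$, and finally deduce the spectral comparison from the min-max principle. First, an element $u \in \dom{\widetilde Q_{A,U}}$ is by definition the restriction $s|_U$ of some $s \in \dom{Q_A}$ with $\supp(s) \subseteq U \subseteq V$. Because $\supp(s) \subseteq U$, the restriction $s|_V$ coincides with the extension-by-zero $u_0$ of $u$ from $U$ to $V$, and the same section $s$ witnesses $u_0 \in \dom{\widetilde Q_{A,V}}$ with $\widetilde Q_{A,V}(u_0,u_0)=Q_A(s,s)=\widetilde Q_{A,U}(u,u)$. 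Since extension by zero also preserves the $L^2$-norm, the map $u \mapsto u_0$ is an isometric embedding of $\dom{\widetilde Q_{A,U}}$ into $\dom{\widetilde Q_{A,V}}$ with respect to \emph{both} the $L^2$-norm and the form norm.

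Next I would extend this by density. For arbitrary $u \in \dom{Q_{A,U}}$, pick an approximating sequence $(u_k) \subseteq \dom{\widetilde Q_{A,U}}$ converging to $u$ in the form norm. By the isometry observation above, $((u_k)_0)$ is Cauchy in $\dom{\widetilde Q_{A,V}}$ for the form norm of $Q_{A,V}$, and closedness of $Q_{A,V}$ produces a limit $w \in \dom{Q_{A,V}}$. Convergence in $L^2(V,E)$ of the extensions by zero to $u_0$ forces $w = u_0$, so $u_0 \in \dom{Q_{A,V}}$ and $Q_{A,V}(u_0,u_0) = \lim_k Q_{A,U}(u_k,u_k) = Q_{A,U}(u,u)$. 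Polarization upgrades this to the sesquilinear identity $Q_{A,U}(u,v) = Q_{A,V}(u_0,v_0)$ for all $u,v \in \dom{Q_{A,U}}$.

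For the spectral comparison, I would invoke the min-max principle for the lower semibounded self-adjoint operators $A_U$ and $A_V$: each test vector $u \in \dom{Q_{A,U}} \setminus \{0\}$ produces $u_0 \in \dom{Q_{A,V}} \setminus \{0\}$ with the same Rayleigh quotient, so
$$ \inf\spec{A_V} \;\leq\; \frac{Q_{A,V}(u_0,u_0)}{\|u_0\|_{L^2(V,E)}^2} \;=\; \frac{Q_{A,U}(u,u)}{\|u\|_{L^2(U,E)}^2}, $$
and taking the infimum over $u$ yields $\inf\spec{A_V} \leq \inf\spec{A_U}$.

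The only subtle point — and it is mild — is bookkeeping around the two levels in the definition of $Q_{A,U}$ (the auxiliary form $\widetilde Q_{A,U}$ from \cref{localized_form_is_closable} together with its closure) and ensuring that the extension-by-zero map defined on the core extends consistently to the closure; closedness of $Q_{A,V}$ and $L^2$-continuity of extension by zero make this a routine density argument rather than a genuine obstacle.
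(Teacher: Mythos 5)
Your proof is correct and follows essentially the same route as the paper's: both arguments reduce to the observation that extension by zero is an isometry for the form norm on the core $\dom{\widetilde Q_{A,U}}$ (via the common representative $s \in \dom{Q_A}$ with $\supp(s)\subseteq U\subseteq V$), extend by density using closedness of $Q_{A,V}$ and $L^2$-identification of the limit, and conclude the spectral inequality from the Rayleigh-quotient characterization of $\inf\spec{A_U}$ as the largest lower bound of $Q_{A,U}$. The bookkeeping point you flag at the end is handled correctly and matches the paper's treatment.
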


\begin{proof}
 Let $s_k \in \dom{Q_A}$ be a sequence with $\supp(s_k) \subseteq U$ and $s_k|_U \to u$ in $\dom{Q_{A,U}}$.
 Then the definition of $Q_{A,U}$ implies that $k \mapsto s_k$ is Cauchy in $\dom{Q_A}$, hence convergent to some $s_\infty \in \dom{Q_A}$.
 Moreover,
 \[ Q_{A,U}(u,u) = \lim_{k \to \infty} \widetilde Q_{A,U}(s_k|_U,s_k|_U) = \lim_{k \to \infty} Q_A(s_k,s_k) = Q_A(s_\infty,s_\infty). \]
 Similarly, since $\supp(s_k) \subseteq V$, we have $s_k|_V \to u_0$ in $L^2(V,E)$ and $k \mapsto s_k|_V$ is Cauchy in $\dom{Q_{A,V}}$, hence convergent to $u_0$ in $\dom{Q_{A,V}}$.
 Thus,
 \[ Q_{A,V}(u_0,u_0) = \lim_{k \to \infty} \widetilde Q_{A,V}(s_k|_V,s_k|_V) = \lim_{k \to \infty} Q_A(s_k,s_k) = Q_A(s_\infty,s_\infty) = Q_{A,U}(u,u). \]
 By the polarization identity, we get equality also away from the diagonal.

 The inequality about the bottom of the spectra follows from the fact that $\inf\spec{A_U}$ is the largest lower bound of $Q_{A,U}$, since we have
 \[ Q_{A,U}(s,s) = Q_{A,V}(s_0,s_0) \geq (\inf\spec{A_V})\|s_0\|^2 = (\inf\spec{A_V})\|s\|^2 \]
 for all $s \in \dom{Q_{A,U}}$, hence $\inf\spec{A_U} \geq \inf\spec{A_V}$.
\end{proof}

\begin{thm}[Decomposition principle]
 \label{decomposition_principle}
 Let $A$ be a lower semibounded self-adjoint extension of an elliptic differential operator as above.
 Then
 \[ \essspec{A} = \essspec{A_{M\setminus K}} \]
 for all compact $K \subseteq \interior{M}$. \end{thm}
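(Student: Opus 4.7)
The plan is to apply Weyl's criterion to $A$ and $A_{M\setminus K}$ and transport singular sequences between them via smooth cutoffs. Recall that $\lambda\in\essspec A$ iff there exists $(u_k)\subseteq\dom A$ with $\|u_k\|=1$, $u_k\rightharpoonup 0$ weakly in $L^2(M,E)$, and $(A-\lambda)u_k\to 0$ in $L^2$, and analogously for $A_{M\setminus K}$. Two auxiliary ingredients are used throughout: by \cref{domain_of_extension_in_local_sobolev_space}, $\dom A\subseteq H^k_\loc(\interior M,E)$ where $k$ is the order of $D$, so any such $(u_k)$ is uniformly bounded in $H^k_\loc$ by the interior elliptic estimates applied to $(A-\lambda)u_k=o(1)$, and Rellich--Kondrachov then yields, along a subsequence, strong $H^{k-1}_\loc$-convergence of $u_k$ to zero; moreover, $D_\minn\subseteq A\subseteq D_\maxx$ by \cref{rem:symmetric_extensions_of_diff_ops}, so \cref{multiplication_by_compact_support_function_maps_to_strong_domain} implies that multiplication by $\chi\in C^\infty_c(\interior M)$ maps $\dom A$ into itself with $A(\chi s)=\chi As+[D,\chi]s$ in the distributional sense.

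For the inclusion $\essspec A\subseteq\essspec{A_{M\setminus K}}$, fix a Weyl sequence $(u_k)$ for $A$ at $\lambda$ and choose $\chi\in C^\infty_c(\interior M)$ equal to $1$ on an open neighborhood of $K$. Then $v_k\coloneqq(1-\chi)u_k\in\dom A$ satisfies $(A-\lambda)v_k=(1-\chi)(A-\lambda)u_k-[D,\chi]u_k$, and the strong $H^{k-1}_\loc$-convergence of $(u_k)$ gives $\|[D,\chi]u_k\|\to 0$ and $\|\chi u_k\|\to 0$; hence $\|v_k-u_k\|\to 0$ and $\|(A-\lambda)v_k\|\to 0$. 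Since $\supp v_k\subseteq M\setminus K$, \cref{extension_by_zero_belongs_to_domain} combined with \eqref{eq:domain_of_localized_operator} shows $v_k|_{M\setminus K}\in\dom{A_{M\setminus K}}$ with $A_{M\setminus K}(v_k|_{M\setminus K})=(Av_k)|_{M\setminus K}$, producing (after normalization) the desired Weyl sequence for $A_{M\setminus K}$ at $\lambda$.

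For the reverse inclusion, the cutoff argument runs in the other direction. Given a Weyl sequence $(t_k)\subseteq\dom{A_{M\setminus K}}$ and $\psi\in C^\infty_c(\interior M\setminus K)$, applying \cref{multiplication_by_compact_support_function_maps_to_strong_domain} on the open submanifold $M\setminus K$ gives $\psi t_k\in\dom{A_{M\setminus K}}$, and one verifies $(\psi t_k)_0\in\dom A$ with $A(\psi t_k)_0=(A_{M\setminus K}(\psi t_k))_0$ by testing $Q_A$ against $v\in\dom A$ decomposed as $v=\chi v+(1-\chi)v$, where $\chi\in C^\infty_c(\interior M)$ equals $1$ near $K$ and vanishes on $\supp\psi$: the first summand contributes nothing by locality of the distributional action of $A$, while the second is handled by \cref{extension_by_zero_belongs_to_domain} together with density of $\dom A$ in $\dom Q_A$. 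A diagonal extraction with cutoffs $\psi_j$ converging pointwise to $1$ on $M\setminus K$ then produces the desired Weyl sequence for $A$. The main technical obstacle is the graph-norm convergence $\psi_j t_k\to t_k$ needed for this extraction: while the $L^2$-convergence is immediate, the commutators $[D,\psi_j]t_k$ need not vanish as the derivatives of $\psi_j$ concentrate on shrinking neighborhoods of $\partial K$, so one must exploit higher local regularity of $t_k$ near $\partial K$ (coming from the Dirichlet-type boundary conditions implicit in the form construction of $A_{M\setminus K}$) to control the $H^{k-1}$-mass of $t_k$ in those shrinking tubes.
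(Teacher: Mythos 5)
Your forward inclusion, $\essspec A \subseteq \essspec{A_{M\setminus K}}$, is correct and is exactly the argument the paper describes: take a singular Weyl sequence for $A$, truncate by $1-\chi$ with $\chi\in C^\infty_c(\interior M)$ equal to $1$ near $K$, control the commutator $[D,\chi]u_k$ via interior elliptic estimates and Rellich--Kondrachov on $\supp\chi$, and observe that $\chi u_k\in\dom{D_\minn}\subseteq\dom A$ by \cref{multiplication_by_compact_support_function_maps_to_strong_domain,rem:symmetric_extensions_of_diff_ops}, so that $(1-\chi)u_k\in\dom A$ has support in $M\setminus K$ and restricts to an element of $\dom{A_{M\setminus K}}$ on which $A_{M\setminus K}$ acts by $A$.

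The reverse inclusion, $\essspec{A_{M\setminus K}}\subseteq\essspec A$, is not actually proved, and you yourself put your finger on the gap. The two directions are not symmetric, because $\partial K$ plays the role of a free boundary for $A_{M\setminus K}$. The regularity $\dom{A_{M\setminus K}}\subseteq H^k_\loc(\interior M\setminus K,E)$ coming from \cref{domain_of_extension_in_local_sobolev_space} controls $t_k$ only on compact subsets of $\interior M\setminus K$, which by construction stay away from $\partial K$; the Dirichlet condition at $\partial K$ encoded in the closure $Q_{A,M\setminus K}$ is a statement at the level of the form norm and does not yield $H^{k-1}$-regularity of operator-domain elements in shrinking collars of $\partial K$ (recall $K$ is only assumed compact, so $\partial K$ need not be a hypersurface, let alone smooth). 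So there is nothing to play off against the blowing-up derivatives of $\psi_j$ in the commutators $[D,\psi_j]t_k$, and the ``higher local regularity near $\partial K$'' you invoke is precisely what the lemmas in the paper do not supply. Nor is the problem fixed by fixing $j$ and sending $k\to\infty$: that does kill $[D,\psi_j]t_k$ by the Rellich argument on $\supp(d\psi_j)$, but then $\|\psi_j t_k\|$ can tend to $0$ if the mass of $t_k$ escapes towards $\partial K$ or to infinity, and one loses the lower bound required of a singular Weyl sequence. Closing this direction requires a different mechanism than the one-sided cutoff scheme; the paper avoids the issue by citing B\"ar's Proposition~1 rather than reproving it.
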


A proof can be found in \cite[Proposition~1]{Baer2000}.
It works by using the characterization of the essential spectrum through \defemph{singular Weyl sequences}:
Namely, $\lambda \in \essspec{A}$ if and only if there is a sequence $s_k \in D_0$, where $D_0$ is any core of $A$, with $s_k \to 0$ weakly, $\liminf_{k \to \infty} \|s_k\| > 0$, and $(A-\lambda)s_k \to 0$.
Elliptic estimates are used to obtain a singular Weyl sequence for $(A_{M\setminus K},\lambda)$ from a singular Weyl sequence for $(A,\lambda)$ by truncation with a cutoff function.
Other sources with similar statements or for certain classes of operators include \cite[Proposition~4.9]{Eichhorn1988}, \cite[Proposition~1.4]{Eichhorn2007}, and \cite[Proposition~3.2.4]{Ma2007}.
A minor difference between the decomposition principle of B\"ar from \cite{Baer2000} and \cref{decomposition_principle} is that we do not assume that $\bsecc(M,E) \cap \dom{A}$ is a core for $A$.
This is not an issue, since $\bsec(\interior M,E) \cap \dom{A}$ is a core of $A$ by \cref{domain_of_extension_in_local_sobolev_space}, and the argument from \cite{Baer2000} can then be carried out in exactly the same way, without using the fact that the $s_k$ have compact support.
One merely needs that multiplication by a cutoff function which is constant outside a compact subset preserves $\dom{A}$, but this is immediate from \cref{multiplication_by_compact_support_function_maps_to_strong_domain,rem:symmetric_extensions_of_diff_ops}.

\subsection{The bottom of the essential spectrum}
\label{sec:bottom_of_essspec}

In this section, we wish to study the bottom of the essential spectrum of a nonnegative self-adjoint extension $A$ of an elliptic differential operator $D \colon \bsec(M,E) \to \bsec(M,E)$.
Of course, the results also apply to lower semibounded operators after straightforward modifications.

Apart from the decomposition principle in \cref{decomposition_principle}, one of the main tools used in the rest of this section will be the following simple and well-known property of compact subsets of $L^p(M,E)$, the proof of which can be found, for instance, in \cite[Theorem~2.32]{Adams2003} or \cite[Theorem~2.5]{Ruppenthal2011a}:

\begin{lem}\label{compact_subsets_in_L2_spaces}
 Let $1 \leq p < \infty$.
 Suppose that $E$ is a Hermitian vector bundle over a Riemannian manifold $M$, and let $B \subseteq L^p(M,E)$ be a totally bounded (equivalently: relatively compact) subset.
 Then for every $\varepsilon > 0$ there exists a compact subset $K \subseteq \interior M$ such that
 \[
 \int_{M\setminus K} |s|^p\,d\mu_g \leq \varepsilon
 \]
 for all $s \in B$.
\end{lem}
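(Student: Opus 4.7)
The plan is to combine the total boundedness of $B$ with a standard uniform integrability argument for single $L^p$ sections. The overall strategy reduces the uniform tail estimate over all of $B$ to a tail estimate for finitely many elements via a $\delta$-net, and then uses the triangle inequality in $L^p$ to propagate the estimate.

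First, I would exhaust $\interior M$ by compact sets. Since $M$ is a second countable manifold, $\interior M$ admits an increasing sequence of compact subsets $(K_m)_{m \in \N}$ with $K_m \subseteq \interior{K_{m+1}}$ and $\bigcup_m K_m = \interior M$. For any fixed $s \in L^p(M,E)$, the dominated convergence theorem applied to the functions $|s|^p \chi_{M \setminus K_m}$ (which are dominated by $|s|^p \in L^1(M)$ and converge pointwise a.e.\ to zero on $\interior M$, hence a.e.\ on $M$ since $\partial M$ has measure zero) gives
\begin{equation*}
 \lim_{m \to \infty} \int_{M \setminus K_m} |s|^p\,d\mu_g = 0.
\end{equation*}
Consequently, for any $\eta > 0$ there is a compact $K \subseteq \interior M$ with $\int_{M\setminus K}|s|^p\,d\mu_g < \eta$.

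Next, I would exploit total boundedness. Given $\varepsilon > 0$, set $\delta = (\varepsilon/2^p)^{1/p}$ (the exact choice will be fixed at the end) and pick a finite $\delta$-net $s_1, \dots, s_N \in B$ for the $L^p$-norm, so every $s \in B$ satisfies $\|s - s_j\|_{L^p} < \delta$ for some $j$. By the first step, for each $j \in \{1, \ldots, N\}$ I can choose a compact $K_j \subseteq \interior M$ with $\int_{M\setminus K_j} |s_j|^p\,d\mu_g < \eta$, where $\eta > 0$ will also be fixed at the end. Take $K \coloneqq \bigcup_{j=1}^N K_j$, which is a finite union of compact subsets of $\interior M$ and therefore itself compact in $\interior M$.

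Finally, for any $s \in B$, pick $j$ with $\|s - s_j\|_{L^p(M,E)} < \delta$; then the triangle inequality on $L^p(M\setminus K, E)$ gives
\begin{equation*}
 \Big(\int_{M\setminus K}|s|^p\,d\mu_g\Big)^{1/p}
 \leq \Big(\int_{M\setminus K} |s-s_j|^p\,d\mu_g\Big)^{1/p} + \Big(\int_{M\setminus K}|s_j|^p\,d\mu_g\Big)^{1/p}
 < \delta + \eta^{1/p}.
\end{equation*}
Choosing $\delta$ and $\eta$ so that $(\delta + \eta^{1/p})^p \leq \varepsilon$ (for instance $\delta = \eta^{1/p} = \varepsilon^{1/p}/2$) yields the desired uniform bound. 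There is no real obstacle here; the only point requiring a little care is to ensure $K$ sits in the \emph{open} interior of $M$, which is automatic because each $K_j$ was chosen there. The argument is entirely parallel to the classical Kolmogorov--Riesz style characterization of compactness in $L^p$, of which this ``equi-tightness'' condition is one of the necessary components.
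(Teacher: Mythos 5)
Your proof is correct. The paper does not supply its own argument for this lemma but refers to \cite[Theorem~2.32]{Adams2003} and \cite[Theorem~2.5]{Ruppenthal2011a}; the argument you give (compact exhaustion plus dominated convergence for a single section, then a finite $\delta$-net and the $L^p$ triangle inequality to pass to the whole family) is precisely the standard ``equi-tightness'' argument those references use, so you have in effect reconstructed the cited proof.

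One small point worth noting: the reduction from $M$ to $\interior M$ via ``$\partial M$ has measure zero'' is used implicitly in the paper's setup and is correct here since $\partial M$ is a smooth hypersurface, hence a $\mu_g$-null set; you flagged this, which is good. Your choice $\delta = \eta^{1/p} = \varepsilon^{1/p}/2$ makes the final estimate $(\delta + \eta^{1/p})^p = \varepsilon$ exact, so the bound holds as a non-strict inequality as required.
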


   In particular, if $T \colon X \to L^p(M,E)$ is a compact linear operator, with $(X,\|\arghere\|_X)$ a Banach space, then
   \cref{compact_subsets_in_L2_spaces} implies that there exists, for every $\varepsilon > 0$, a compact subset $K \subseteq \interior M$ such that
   \begin{equation}
    \label{eq:compact_subsets_in_L2_spaces_compact_operator}
    \int_{M\setminus K} |Tx|^p\,d\mu_g \leq \varepsilon \|x\|_X^p
   \end{equation}
   for all $x \in X$.
We are now ready to show our main \namecref{below_essspec_function} for this section:

\begin{lem}
 \label{below_essspec_function}
 Let $A$ be a nonnegative self-adjoint operator\footnote{Note that $A$ need not be an extension of a differential operator.} on $L^2(M,E)$.
 For every $0 < \lambda < \inf\essspec{A}$ and $\varepsilon > 0$, there exists a compact subset $K \subseteq \interior M$ such that
 \begin{equation*}
  \label{eq:below_essspec_function}
  Q_A(s,s)
  \geq \int_M (\lambda \chi_{M\setminus K} - \varepsilon \chi_K)|s|^2\,d\mu_g
  = \lambda \int_{M\setminus K} |s|^2\,d\mu_g - \varepsilon \int_K |s|^2\,d\mu_g
 \end{equation*}
 for all $s \in \dom{Q_A}$, where $\chi_{M\setminus K}$ and $\chi_K$ are the characteristic functions.
\end{lem}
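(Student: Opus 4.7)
The plan is to exploit the spectral theorem: pick $\lambda'$ with $\lambda < \lambda' < \inf\essspec{A}$ and let $P \coloneqq \chi_{[0,\lambda']}(A)$. Since only isolated eigenvalues of finite multiplicity can occur below $\inf\essspec{A}$, the projection $P$ has finite rank and is therefore compact as an operator $L^2(M,E)\to L^2(M,E)$. The spectral theorem yields the form-bound
\[
 Q_A(s,s) \geq Q_A((I-P)s,(I-P)s) \geq \lambda'\|(I-P)s\|^2
\]
for every $s \in \dom{Q_A}$, using that the two summands of $s=Ps+(I-P)s$ are $Q_A$-orthogonal and $Q_A(Ps,Ps) \geq 0$.

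The task is then to bound $\|(I-P)s\|^2$ from below by (something close to) $\int_{M\setminus K}|s|^2$. By compactness of $P$ and the consequence \cref{eq:compact_subsets_in_L2_spaces_compact_operator} of \cref{compact_subsets_in_L2_spaces} (applied with $T = P$ and $X = L^2(M,E)$), for every $\delta > 0$ there is a compact $K \subseteq \interior M$ such that $\int_{M\setminus K}|Ps|^2\,d\mu_g \leq \delta\|s\|^2$ for all $s$. Combining this with the pointwise Young inequality $|s|^2 \leq (1+\tau)|(I-P)s|^2 + (1+1/\tau)|Ps|^2$ and integrating over $M\setminus K$ gives
\[
 \int_{M\setminus K}|s|^2\,d\mu_g \leq (1+\tau)\|(I-P)s\|^2 + (1+1/\tau)\,\delta\,\|s\|^2.
\]
Rearranging this for $\|(I-P)s\|^2$, feeding it into the form-bound above, and splitting $\|s\|^2 = \int_K|s|^2 + \int_{M\setminus K}|s|^2$, one ends up with an inequality of the shape
\[
 Q_A(s,s) \geq \alpha_{\tau,\delta}\int_{M\setminus K}|s|^2\,d\mu_g - \beta_{\tau,\delta}\int_K|s|^2\,d\mu_g
\]
with explicit coefficients $\alpha_{\tau,\delta} = \lambda'(1-(1+1/\tau)\delta)/(1+\tau)$ and $\beta_{\tau,\delta}= \lambda'(1+1/\tau)\delta/(1+\tau)$.

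It then remains to choose the parameters so that $\alpha_{\tau,\delta} \geq \lambda$ and $\beta_{\tau,\delta} \leq \varepsilon$; the choice $\delta = \varepsilon\tau/\lambda'$ forces $\beta_{\tau,\delta} = \varepsilon$, after which the condition $\alpha_{\tau,\delta} \geq \lambda$ reduces to $\lambda' - \lambda - \varepsilon \geq (\lambda+\varepsilon)\tau$, which is satisfied for $\tau$ small provided $\lambda' > \lambda+\varepsilon$. This last condition is arranged by first reducing (harmlessly, since the RHS of the claim is monotonically decreasing in $\varepsilon$) to the case $\varepsilon < \inf\essspec{A} - \lambda$ and then picking $\lambda' \in (\lambda+\varepsilon,\inf\essspec{A})$.

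The main obstacle is that the compactness of $P$ only controls the \emph{tail} $\int_{M\setminus K}|Ps|^2$, not the integral over $K$; Young's inequality is the device that converts this one-sided tail bound into a comparison between $\|(I-P)s\|^2$ and $\int_{M\setminus K}|s|^2$ without losing the leading coefficient $\lambda'$, which is crucial because the naive inequality $|s|^2 \leq 2|(I-P)s|^2 + 2|Ps|^2$ would cost a factor of $2$ and fail when $\lambda'/2 < \lambda$.
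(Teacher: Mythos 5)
Your proof is correct and follows essentially the same route as the paper's: both hinge on the finite-rank spectral projection $P=\chi_{[0,\lambda']}(A)$ at a level strictly below $\inf\essspec{A}$, combined with \cref{compact_subsets_in_L2_spaces} (in the form \cref{eq:compact_subsets_in_L2_spaces_compact_operator}) to make the tail of $Ps$ outside a compact set uniformly small. The only divergence is bookkeeping: where the paper keeps $Q_A(P_0s,P_0s)$, bounds it from below, and then controls the cross terms $\llangle \chi P_0 s,(1-P_0)s\rrangle$, you simply discard $Q_A(Ps,Ps)\ge 0$ and recover $\int_{M\setminus K}|s|^2$ from $\|(I-P)s\|^2$ via the Peter--Paul inequality, which yields the same conclusion with a somewhat shorter error analysis.
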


\begin{proof}
 Denote by $P_A$ the spectral measure associated to $A$, and let
 \[ 0 < \delta < \min\{\inf\essspec{A} - \lambda,\varepsilon/2\}. \]
 Put $P_0 \coloneqq P_A([0,\lambda + \delta])$.
 Then the inclusion $\img{P_0} \cap \dom{Q_A} \hookrightarrow L^2(M,E)$ is compact (even of finite rank) and, by \cref{eq:compact_subsets_in_L2_spaces_compact_operator}, there exists a compact subset $K \subseteq \interior M$ such that
 \begin{equation}
  \label{eq:below_essspec_function_choice_of_K_1}
  \int_{M\setminus K} \Big(\lambda + \delta + \frac{\varepsilon}{2}\Big) |s|^2 \,d\mu_g \leq \frac{\varepsilon}{2} \|s\|^2 + Q_A(s,s)
 \end{equation}
 for all $s \in \img{P_0} \subseteq \dom{Q_A}$ and
 \begin{equation}
  \label{eq:below_essspec_function_choice_of_K_2}
  \bigg(\int_{M\setminus K} |P_0s|^2\,d\mu_g\bigg)^{1/2} \leq \frac{\delta}{2(\lambda + \delta + \tfrac{\varepsilon}{2})} \|s\|
 \end{equation}
 for all $s \in L^2(M,E)$.
 Here, \cref{eq:below_essspec_function_choice_of_K_1} is possible since $s \mapsto (\tfrac{\varepsilon}{2} \|s\|^2 + Q_A(s,s))^{1/2}$ is equivalent to the norm on $\dom{Q_A}$, and \cref{eq:below_essspec_function_choice_of_K_2} works because $P_0 \colon H \to H$ is a finite rank projection.
 Now, for $s \in \dom{Q_A}$,
 \begin{multline}
  \label{eq:below_essspec_function_1}
  Q_A(s,s)
  = Q_A(P_0 s, P_0 s) + Q_A((1-P_0)s,(1-P_0)s) \geq \\
  \geq \int_{M\setminus K} \Big(\lambda + \delta + \frac{\varepsilon}{2}\Big) |P_0s|^2\,d\mu_g - \frac{\varepsilon}{2} \int_M |P_0s|^2\,d\mu_g + (\lambda + \delta) \int_M |(1-P_0)s|^2\,d\mu_g \geq \\
  \geq \llangle \chi P_0 s, P_0 s \rrangle + \llangle \chi (1-P_0)s,(1-P_0)s \rrangle
 \end{multline}
 with $\chi \coloneqq (\lambda + \delta + \frac{\varepsilon}{2})\chi_{M\setminus K} - \frac{\varepsilon}{2} = (\lambda + \delta)\chi_{M\setminus K} - \frac{\varepsilon}{2} \chi_K$, and where we have used that $\chi \leq \lambda + \delta$ to estimate the term with $(1-P_0)s$.
 The right hand side of \cref{eq:below_essspec_function_1} is equal to
 \begin{multline*}
 \llangle \chi s,s \rrangle - \llangle (P_0\chi(1-P_0)+(1-P_0)\chi P_0)s,s \rrangle = \\
 = \llangle \chi s, s \rrangle - \llangle P_0 (\chi + \tfrac{\varepsilon}{2}) (1-P_0)s, s \rrangle - \llangle (1-P_0)(\chi + \tfrac{\varepsilon}{2})P_0 s, s \rrangle,
 \end{multline*}
 where $P_0(1-P_0) = 0$ was used in order to replace $\chi$ by $\widetilde \chi \coloneqq \chi+\frac{\varepsilon}{2}$.
 Moreover,
 \begin{multline*}
  - \llangle P_0 \widetilde \chi (1-P_0)s, s \rrangle - \llangle (1-P_0) \widetilde \chi P_0 s, s \rrangle = \\
  = - \llangle \widetilde \chi s, P_0 s \rrangle
  + \llangle \widetilde \chi P_0 s, P_0 s \rrangle
  - \llangle P_0 s, \widetilde \chi s \rrangle
  + \llangle \widetilde \chi P_0 s, P_0 s \rrangle
  \geq -2\re \llangle \widetilde \chi s, P_0 s \rrangle,
 \end{multline*}
 the inequality being due to $\widetilde \chi \geq 0$.
 Now
 \begin{equation*}
  |2\llangle \widetilde \chi s,P_0 s \rrangle|
  = 2(\lambda+\delta + \tfrac{\varepsilon}{2}) |\llangle \chi_{M\setminus K} s,P_0 s \rrangle|
  \leq 2(\lambda+\delta + \tfrac{\varepsilon}{2}) \|s\| \|\chi_{M\setminus K} P_0 s\|
  \leq \delta \|s\|^2
 \end{equation*}
 by \cref{eq:below_essspec_function_choice_of_K_2}.
 Putting it all together, we have shown that
 \begin{equation}
  Q_A(s,s) \geq \llangle \chi s,s \rrangle - \delta \|s\|^2
  = \int_M \Big(\lambda \chi_{M\setminus K} - \Big(\frac{\varepsilon}{2} + \delta\Big) \chi_K\Big) |s|^2\,d\mu_g
  \geq \int_M (\lambda \chi_{M\setminus K} - \varepsilon \chi_K)|s|^2\,d\mu_g
 \end{equation}
 for all $s \in \dom{Q_A}$, as claimed.
\end{proof}

The next result is the appropriate formulation of \defemph{Persson's theorem} \cite{Persson1960} to our setting, and gives a characterization of the bottom of the essential spectrum, and its proof is now an easy consequence of \cref{decomposition_principle,below_essspec_function}.
A version for second order operators on $\R^n$ can also be found in \cite[Theorem~3.2]{Agmon1982}.

\begin{thm}
 \label{limit_of_infimum_of_spectrum}
 Let $A$ be a nonnegative self-adjoint extension of an elliptic differential operator acting on the sections of a Hermitian vector bundle $E \to M$ over a Riemannian manifold.
 Then
 \begin{equation}
  \label{eq:limit_of_infimum_of_spectrum}
  \lim_K \big(\inf\spec{A_{M\setminus K}}\big) = \inf\essspec{A},
 \end{equation}
 where the limit is with respect to the net of compact subsets of $\interior M$. \end{thm}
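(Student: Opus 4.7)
The plan is to sandwich the limit in \cref{eq:limit_of_infimum_of_spectrum} between $\inf\essspec{A}$ from above and below, using the decomposition principle (\cref{decomposition_principle}) for the upper bound and the Persson-type estimate (\cref{below_essspec_function}) for the lower bound. First I would verify that the net is monotonically increasing: if $K_1 \subseteq K_2$ are compact subsets of $\interior M$, then $M\setminus K_2 \subseteq M\setminus K_1$, and \cref{extension_by_zero_belongs_to_domain} yields $\inf\spec{A_{M\setminus K_1}} \leq \inf\spec{A_{M\setminus K_2}}$; hence the limit in \cref{eq:limit_of_infimum_of_spectrum} exists in $[0,\infty]$ as the supremum of the net.

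For the upper bound, \cref{decomposition_principle} gives $\essspec{A_{M\setminus K}} = \essspec{A}$, whence $\inf\spec{A_{M\setminus K}} \leq \inf\essspec{A_{M\setminus K}} = \inf\essspec{A}$ for every compact $K \subseteq \interior M$, and passing to the limit yields ``$\leq$'' in \cref{eq:limit_of_infimum_of_spectrum}.

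For the reverse inequality, if $\inf\essspec{A} = 0$ the bound is trivial from the nonnegativity of $A$, so fix $0 < \lambda < \inf\essspec{A}$. Applying \cref{below_essspec_function} with $\varepsilon = 1$ produces a compact $K \subseteq \interior M$ such that $Q_A(s,s) \geq \lambda \int_{M\setminus K} |s|^2\,d\mu_g - \int_K |s|^2\,d\mu_g$ holds for all $s \in \dom{Q_A}$. The key observation is that the second term disappears once we restrict to the subspace $\dom{Q_{A,M\setminus K}}$: for $u$ in this space, the extension by zero $u_0$ lies in $\dom{Q_A}$ and is identically zero on $K$, so $\int_K |u_0|^2\,d\mu_g = 0$, and \cref{extension_by_zero_belongs_to_domain} gives $Q_{A,M\setminus K}(u,u) = Q_A(u_0,u_0) \geq \lambda\|u\|^2$. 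Thus $\inf\spec{A_{M\setminus K}} \geq \lambda$, and monotonicity propagates this lower bound to the limit; since $\lambda < \inf\essspec{A}$ was arbitrary, ``$\geq$'' follows. The substantive work is already encoded in \cref{decomposition_principle,below_essspec_function}, and the only small subtlety here is arranging for the $\varepsilon$-term in the Persson estimate to drop out, which is achieved by testing with sections supported away from the compact piece $K$.
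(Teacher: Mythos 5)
Your proof is correct and follows essentially the same route as the paper's: the upper bound comes from the decomposition principle (\cref{decomposition_principle}), the lower bound from the Persson-type estimate (\cref{below_essspec_function}) combined with the extension-by-zero lemma (\cref{extension_by_zero_belongs_to_domain}), and monotonicity of the net follows from the same lemma. The only cosmetic difference is that you make explicit that choosing $\varepsilon = 1$ in \cref{below_essspec_function} suffices because the $\varepsilon$-term is annihilated by testing against sections vanishing on $K$; the paper states the same fact directly for $s|_K = 0$.
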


\begin{proof}
 Given $0 < \lambda < \inf\essspec{A}$, there exists a compact subset $K \subseteq \interior M$ such that $Q_A(s,s) \geq \lambda \int_{M\setminus K}|s|^2\,d\mu_g$ for all $s \in \dom{Q_A}$ with $s|_K = 0$, see \cref{below_essspec_function}.
 For $s \in \dom{Q_{A,M\setminus K}}$, we have $s_0 \in \dom{Q_A}$ by \cref{extension_by_zero_belongs_to_domain}, and
 \[ Q_{A,M\setminus K}(s,s) = Q_A(s_0,s_0) \geq \lambda \int_{M\setminus K} |s_0|^2\,d\mu_g = \lambda \|s\|^2. \]
 Therefore, $\inf\spec{A_{M\setminus K}} \geq \lambda$.
 It follows from \cref{extension_by_zero_belongs_to_domain} that $K \mapsto \inf\spec{A_{M\setminus K}}$ is an increasing net, so the limit \cref{eq:limit_of_infimum_of_spectrum} exists.
 Since the above holds for every $\lambda < \inf\essspec{A}$, we obtain $\lim_K(\inf\spec{A_{M\setminus K}}) \geq \inf\essspec{A}$, and by \cref{decomposition_principle} we also have
 \begin{equation*}
  \inf\spec{A_{M\setminus K}} \leq \inf\essspec{A_{M\setminus K}} = \inf\essspec{A},
 \end{equation*}
 for all compact $K \subseteq \interior M$,
 so that equality holds in \cref{eq:limit_of_infimum_of_spectrum}.
\end{proof}

In case $\spec{A}$ is discrete, we can use \cref{below_essspec_function} to construct proper coercivity functions for $Q_A$, in the following sense:

\begin{thm}
 \label{characterization_of_discreteness_of_spectrum}
 Let $A$ be a nonnegative self-adjoint extension of an elliptic differential operator acting on the sections of $E \to M$.
 Then the following are equivalent:
 \begin{enumerate}
 \item
  \label{item:characterization_of_discreteness_of_spectrum_disc_spec}
  The spectrum of $A$ is discrete, \ie $\essspec{A} = \emptyset$.
 \item
  \label{item:characterization_of_discreteness_of_spectrum_function_infty}
  There exists a proper  \footnote{    In this case, $\psi$ is proper if and only if $\psi(x) \to \infty$ as $x \to \infty$ (leaving every compact subset of $\interior M$).
  }
  smooth function $\psi \colon \interior M \to [-1,\infty)$ such that
  \begin{equation}
   \label{eq:characterization_of_discreteness_of_spectrum_function_infty}
   Q_A(s,s) \geq \int_M \psi|s|^2\,d\mu_g
  \end{equation}
  for all $s \in \dom{Q_A}$.
 \item
  \label{item:characterization_of_discreteness_of_spectrum_function_infty_measurable}
  There exists a proper measurable function $\psi \colon \interior M \to [-1,\infty)$ such that \cref{eq:characterization_of_discreteness_of_spectrum_function_infty} holds for all $s \in \dom{Q_A}$.
 \end{enumerate}
\end{thm}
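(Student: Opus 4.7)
The implication (ii) $\implies$ (iii) is immediate since smooth functions are measurable. For (iii) $\implies$ (i), I would use Theorem~\ref{limit_of_infimum_of_spectrum} in the form of Persson's characterization of $\inf\essspec{A}$. Given any $\lambda > 0$, properness of $\psi$ supplies a compact $K \subseteq \interior M$ with $\psi \geq \lambda$ almost everywhere on $M\setminus K$. For $s \in \dom{Q_{A,M\setminus K}}$, the extension by zero $s_0$ lies in $\dom{Q_A}$ by Lemma~\ref{extension_by_zero_belongs_to_domain}, and
\[
Q_{A,M\setminus K}(s,s) = Q_A(s_0,s_0) \geq \int_M \psi\,|s_0|^2\,d\mu_g \geq \lambda\|s\|^2,
\]
so $\inf\spec{A_{M\setminus K}} \geq \lambda$. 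Since $\lambda > 0$ is arbitrary, Theorem~\ref{limit_of_infimum_of_spectrum} forces $\inf\essspec{A} = +\infty$, i.e.\ $\essspec{A} = \emptyset$.

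The main content lies in (i) $\implies$ (ii). Since $\essspec{A} = \emptyset$, Lemma~\ref{below_essspec_function} applies for each $k \in \N$ with $\lambda = k$ and $\varepsilon = 1$, yielding a compact $L_k \subseteq \interior M$ with
\[
Q_A(s,s) \geq k\int_{M\setminus L_k}|s|^2\,d\mu_g - \int_{L_k}|s|^2\,d\mu_g
\]
for all $s \in \dom{Q_A}$. Because the integrand $k\chi_{M\setminus L_k} - \chi_{L_k}$ decreases pointwise when $L_k$ is enlarged, I may assume the $L_k$ are increasing with $L_k \subseteq \interior{L_{k+1}}$ and $\bigcup_k L_k = \interior M$. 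I then choose smooth cutoffs $\eta_k \in C^\infty(\interior M,[0,1])$ with $\eta_k \equiv 0$ on $L_k$ and $\eta_k \equiv 1$ on $\interior M \setminus L_{k+1}$. Since $\eta_k \leq \chi_{M\setminus L_k}$ and $\int_{L_k}|s|^2\,d\mu_g \leq \|s\|^2$, the inequality weakens to $Q_A(s,s) + \|s\|^2 \geq k\int_M \eta_k|s|^2\,d\mu_g$. Multiplying by $c_k \coloneqq 1/k^2$, summing (Tonelli), and dividing by $C \coloneqq \sum_k c_k = \pi^2/6$ yields
\[
Q_A(s,s) \geq \int_M \psi\,|s|^2\,d\mu_g, \qquad \psi(x) \coloneqq \frac{6}{\pi^2}\sum_{k=1}^\infty \frac{\eta_k(x)}{k} - 1.
\]
The family $\{\supp\eta_k\}$ is locally finite (any compact set lies in some $L_N$, on which $\eta_k \equiv 0$ for $k \geq N$), so $\psi$ is smooth; $\psi \geq -1$ is immediate; and for $x \notin L_{m+1}$ one has $\eta_k(x) = 1$ for $1 \leq k \leq m$, so $\psi(x) \geq \frac{6}{\pi^2}H_m - 1 \to \infty$ as $x$ leaves every compact, giving properness.

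The delicate point is balancing smoothness, properness, and the lower bound $\psi \geq -1$ simultaneously. Replacing $\chi_{M\setminus L_k}$ by the smaller smooth bump $\eta_k$ preserves the inequality, while replacing $\chi_{L_k}$ by the constant $1$ incurs a bounded penalty that is absorbed into the final $-1$ shift; the weights $c_k = 1/k^2$ are then essentially forced by the competing constraints that $\sum c_k$ must converge (to compress the infinite family of Persson-type bounds into a single one) while $\sum c_k \cdot k = \sum 1/k$ must diverge (to drive the partial sums $H_m$, and hence $\psi$, to $+\infty$).
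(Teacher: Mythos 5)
Your proof is correct and follows the same overall strategy as the paper's: (iii)$\implies$(i) goes through Persson's theorem (\cref{limit_of_infimum_of_spectrum}) and \cref{extension_by_zero_belongs_to_domain} exactly as in the text, and (i)$\implies$(ii) extracts an exhausting sequence of compact sets from \cref{below_essspec_function} and compresses the resulting family of inequalities into a single one by a weighted sum. The only real difference is in the bookkeeping of (i)$\implies$(ii): the paper chooses thresholds $\lambda_k = 2^kk$ with geometric weights $2^{-k}$ and then postulates a smooth $\psi_0$ pinned between $k-1$ and $k$ on the annuli $K_{k+1}\setminus K_k$, whereas you take $\lambda_k = k$ with weights $1/k^2$ and build $\psi$ explicitly as $\frac{6}{\pi^2}\sum_k \eta_k/k - 1$ from a locally finite family of cutoffs $\eta_k$. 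Your variant is a little more self-contained, since smoothness, the lower bound, and properness are all immediate from the concrete formula rather than relying on the (easy but unstated) existence of the interpolating $\psi_0$; in exchange, the paper's choice of weights makes the divergence $\sum 2^{-k}\lambda_k = \sum k = \infty$ more transparent than your harmonic series. Both are instances of the same principle: pick summable weights $c_k$ with $\sum_k c_k\lambda_k = \infty$.
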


\begin{proof}
 Item~\cref{item:characterization_of_discreteness_of_spectrum_function_infty} is inspired by \cite{Iwatsuka1986,Kondratiev2002,Haslinger2014,Ruppenthal2011a}, where the construction is done for certain classes of operators.
 Assume first that $A$ has discrete spectrum.
 By \cref{below_essspec_function}, there are compact subsets $K_k \subseteq \interior M$, $k \in \N$, such that $Q_A(s,s) \geq 2^k k\int_{M\setminus K_k} |s|^2\,d\mu_g - \int_{K_k} |s|^2\,d\mu_g$ for all $s \in \dom{Q_A}$.
 Without loss of generality, we may assume that $(K_k)_{k \in \N}$ forms a compact exhaustion of $\interior M$.
 For $s \in \dom{Q_A}$, we estimate
 \begin{multline*}
   Q_A(s,s) = \sum_{k=1}^\infty 2^{-k}Q_A(s,s)
   \geq \sum_{k=1}^\infty\bigg(\int_{M\setminus K_k} k|s|^2\,d\mu_g - 2^{-k}\int_{K_k} |s|^2\,d\mu_g\bigg) \geq \\
   \geq \sum_{k=1}^\infty \int_{K_{k+1}\setminus K_k} k|s|^2\,d\mu_g - \sum_{k=1}^\infty 2^{-k} \|s\|^2
   = \sum_{k=1}^\infty \int_{K_{k+1}\setminus K_k} k|s|^2\,d\mu_g - \|s\|^2.
 \end{multline*}
 Let $\psi_0 \colon \interior M \to [0,\infty)$ be a smooth function with $k-1 \leq \psi_0|_{(K_{k+1}\setminus K_k)} \leq k$ for $k \geq 1$ and $\psi_0|_{K_1} = 0$.
 Then $\psi_0$ is proper, and $\psi \coloneqq \psi_0 - 1 \colon \interior M \to [-1,\infty)$ has the properties sought in items~\cref{item:characterization_of_discreteness_of_spectrum_function_infty,item:characterization_of_discreteness_of_spectrum_function_infty_measurable}.

 Clearly, \cref{item:characterization_of_discreteness_of_spectrum_function_infty} implies \cref{item:characterization_of_discreteness_of_spectrum_function_infty_measurable}, and if $\psi \colon \interior M \to [-1,\infty)$ is as in \cref{item:characterization_of_discreteness_of_spectrum_function_infty_measurable}, then for $\lambda > 0$ fixed we put $K \coloneqq \psi^{-1}([-1,\lambda])$.
 Since $\psi$ is proper, $K$ is compact, and for $s \in \dom{Q_A}$ we have
 \[ Q_A(s,s) + \|s\|^2
 \geq \int_M (\psi + 1)|s|^2\,d\mu_g
 \geq \int_{M\setminus K} (\psi + 1)|s|^2\,d\mu_g
 \geq (\lambda+1)\int_{M\setminus K}|s|^2\,d\mu_g, \]
 hence $Q_A(s,s) \geq \lambda\int_{M\setminus K}|s|^2\,d\mu_g - \int_K |s|^2\,d\mu_g$.
 It follows that $\lambda \leq \inf\spec{A_{M\setminus K}}$, therefore $\lambda \leq \inf\essspec{A}$ by \cref{limit_of_infimum_of_spectrum}.
 Since $\lambda$ was arbitrary, $\essspec{A} = \emptyset$.
\end{proof}

\begin{rem}
 \label{rem:characterization_of_discreteness_of_spectrum}
 By modifying the definition of $K_k$ in the proof of \cref{characterization_of_discreteness_of_spectrum} such that $Q_A(s,s) \geq 2^kk\int_{M\setminus K_k}|s|^2\,d\mu_g - \varepsilon\int_{K_k}|s|^2\,d\mu_g$ for $s \in \dom{Q_A}$ and $k \in \N$, we see that, for every $\varepsilon > 0$, there is $\psi \colon \interior M \to [-\varepsilon,\infty)$ smooth, proper, and satisfying \cref{eq:characterization_of_discreteness_of_spectrum_function_infty}.
 By replacing $A$ with $A - (\inf\spec{A})\id{L^2(M,E)}$, we can also choose $\psi$ to have values in $[\inf\spec{A}-\varepsilon,\infty)$.
\end{rem}

\begin{rem}
 \Cref{limit_of_infimum_of_spectrum} also allows a characterization of the condition $\inf\essspec{A} > 0$.
 By basic results of spectral theory, this is equivalent to $\img{A}$ being closed and $\ker(A)$ having finite dimension, \ie $A$ is an (unbounded) Fredholm operator.
 Using \cref{limit_of_infimum_of_spectrum}, it is straightforward to see that this is the case if and only if there is a compact subset $K \subseteq \interior M$ and $C>0$ such that
 \begin{equation}
  \label{eq:characterization_of_fredholm_estimate}
  \|s\|^2 \leq C\bigg(Q_A(s,s) + \int_K |s|^2\,d\mu_g\bigg)
 \end{equation}
 holds for all $s \in \dom{Q_A}$.
 In \cite[section~3.1]{Ma2007}, the inequality~\cref{eq:characterization_of_fredholm_estimate} is called a \defemph{fundamental estimate} and shown to be sufficient (with $A = \square^E$) for $A$ to be Fredholm.
 Also equivalent to \cref{eq:characterization_of_fredholm_estimate} is the existence of $K \subseteq \interior M$ compact and $C>0$ such that $\|s\|^2 \leq CQ_A(s,s)$ for all $s \in \dom{Q_A}$ with $\supp(s) \subseteq M\setminus K$.
\end{rem}

 \section{Preliminaries on complex and Hermitian geometry}
\label{sec:complex_geometry}

Let $X$ be a Hermitian manifold, with almost complex structure $J$ and compatible Riemannian metric $g$, and let $E \to X$ be a Hermitian holomorphic vector bundle.
On the complex vector bundle
$(TX\otimes_\R\Cplx,\Cplxi)$ we have the Hermitian metric $\langle \arghere,\arghere\rangle$, defined as the sesquilinear extension of $g$. Together with the Hermitian metric on $E$, this induces Hermitian forms on the bundles $\Lambda^k T^*X \otimes E$, which we all continue to denote by $\langle\arghere,\arghere\rangle$.
On functions, we put $\langle f, g \rangle \coloneqq f\ol g$, as usual.
These also induce a global inner product on $\Omega_c(X,E)$, the smooth differential forms on $X$ with values in $E$ and with compact support, given by~\cref{eq:ltwo_inner_product}, and requiring that $\llangle u,v \rrangle = 0$ if $u$ and $v$ have different degree. 
Since $X$ is Hermitian, it follows that the decomposition $\Omega_c(X,E) = \bigoplus_{p,q}\Omega_c^{p,q}(X,E)$ is orthogonal for this inner product.
We will frequently make use of local orthonormal frames.
Usually, $\{w_j\}_{j=1}^n$ will denote such a frame for $T^{1,0}X$, with its conjugate frame $\{\ol w_j\}_{j=1}^n$ a local orthonormal frame of $T^{0,1}X$.
We have the dual coframes $\{w^j\}_{j=1}^n$ and $\{\ol w^j\}_{j=1}^n$ of $(T^{1,0}X)^*$ and $(T^{0,1}X)^*$, respectively.

We denote by $\ins{\xi}$ for $\xi \in TX\otimes_\R\Cplx$ (or a vector field) the insertion operator on forms, and by $\extp{\alpha}$ the operator of taking the wedge product with $\alpha$.
Then $\ins{\xi}$ is the adjoint to $\extp{\xi^\flat}$, with $\xi^\flat$ the dual one-form.
By writing $u,v \in \Lambda^{p,q}T^*_xX \otimes E_x$ in terms of an orthonormal frame as above, it is easily seen that
\begin{equation}
 \label{eq:exterior_algebra_identity_1}
 \sum_{j=1}^n \langle \ins{w_j}(u),\ins{w_j}(v) \rangle  = p \langle u,v \rangle \quad\text{and}\quad \sum_{j=1}^n \langle\ins{\ol w_j}(u),\ins{\ol w_j}(v)\rangle = q\langle u,v \rangle.
\end{equation}
In other words, $\sum_{j=1}^n (\ins{w_j})^* \ins{w_j} = \sum_{j=1}^n \extp{w^j} \ins{w_j}$ is $p$ times the identity on $\Lambda^{p,\bullet}T^*X \otimes E$.
This also implies
\begin{equation}
 \label{eq:exterior_algebra_identity_2}
 \sum_{j=1}^n \big\langle (\alpha \wedge \ins{\xi})\ins{\ol w_j}(u), \ins{\ol w_j}(v) \big\rangle = (q-1) \big\langle (\alpha \wedge \ins{\xi})u,v \big\rangle 
\end{equation}
for all $\alpha \in T^*_x X \otimes_\R\Cplx$ and $\xi \in T_xX \otimes_\R\Cplx$.

Associated to the Dolbeault complex~\cref{eq:dolbeault_complex} is the second order differential operator\footnote{  We allow ourselves a small abuse of notation here, since the same symbol is used to denote the self-adjoint extension of $\square^E$ with $\dbar$-Neumann boundary conditions.
}
\[ \square^E \coloneqq \dbar^{E,\fadj}\dbar^E + \dbar^E\dbar^{E,\fadj} = (\dbar^E + \dbar^{E,\fadj})^2 \colon \Omega^{\bullet,\bullet}(X,E) \to \Omega^{\bullet,\bullet}(X,E), \]
called the \defemph{Dolbeault Laplacian} (or \defemph{complex Laplacian}), where we denote by
$
\dbar^{E,\fadj}
$
the formal adjoint to $\dbar^E$ with respect to \cref{eq:ltwo_inner_product}.
The principal symbol of $\square^E$ reads
\begin{equation}
 \label{eq:symbol_square}
 \symb{\square^E}(\gamma)u = - \ins{(\gamma^\sharp)^{0,1}}(\gamma^{0,1} \wedge u) - \gamma^{0,1} \wedge \ins{(\gamma^\sharp)^{0,1}}(u) = -\langle \gamma^{0,1},\gamma^{0,1} \rangle\, u  = -\frac{1}{2} |\gamma|^2\, u,
\end{equation}
for all $\gamma \in T^*_xX \subseteq T^*_xX \otimes_\R \Cplx$ and $u \in \Lambda^{\bullet,\bullet} T^*_xX \otimes E_x$,
where $\ins{Z}$ for $Z \in TX \otimes_\R\Cplx$ is the insertion operator, and where $\gamma^\sharp$ is the dual vector, with $(0,1)$-part $(\gamma^\sharp)^{0,1}$.

\subsection{Weitzenb\"ock type formulas for the Dolbeault Laplacian}

It follows from \cref{eq:symbol_square} that $2\square^E$ is an operator of Laplace type, meaning that its principal symbol is $\symb{2\square^E}(\gamma) = - |\gamma|^2 \id{\Lambda T^*X \otimes E}$. Consequently, $\sqrt{2}(\dbar^E + \dbar^{E,\fadj})$ is a Dirac type operator.
On a \emph{K\"ahler} manifold, this is an important example of a Dirac operator associated to a Dirac bundle in the sense of \cite{Lawson1989}.
In fact,
\begin{equation}
 \label{eq:clifford_kaehler}
 c_p(\gamma)u \coloneqq \sqrt{2} \big(\gamma^{0,1} \wedge u - \ins{(\gamma^\sharp)^{0,1}}(u)\big)
\end{equation}
defines a Clifford module structure on $\Lambda^{p,\bullet}T^*X \otimes E$ such that $(\Lambda^{p,\bullet}T^*X \otimes E,X,c_p,\widetilde\nabla)$ is a Dirac bundle, where $\widetilde\nabla$ is the connection induced by the Levi--Civita connection on $TX$ and the Chern connection on $E$.
For a reference on this, see \cite[Proposition~3.27]{Berline2004}.
The Dirac operator associated to this structure is $D^E \coloneqq \sqrt{2} (\dbar^E + \dbar^{E,\fadj})$.
The above implies that, on a K\"ahler manifold, we have the Weitzenb\"ock type formula
\begin{equation}
 \label{eq:dolbeault_weitzenboeck}
 2\square^E = (D^E)^2 = \widetilde \nabla^\fadj \widetilde\nabla + c_p(R^{\Lambda^{p,\bullet}T^*X \otimes E})
\end{equation}
on $\Omega^{p,\bullet}(X,E)$, where the last term is the endomorphism of $\Lambda^{p,\bullet}T^*X \otimes E$ defined by
\begin{equation}
 \label{eq:general_weitzenboeck_curvature_term}
 c_p(R^{\Lambda^{p,\bullet}T^*X \otimes E})|_x
 = \sum_{j < k} c_p(e^j)\circ c_p(e^k)\circ R^{\Lambda^{p,\bullet}T^*X\otimes E}(e_j,e_k)
\end{equation}
for any (real) orthonormal basis $\{e_j\}_{j=1}^{2n}$ of $T_xX$, see \cite[Theorem~II.8.2]{Lawson1989} or \cite[Theorem~11.1.67]{Nicolaescu2014}.
We shall not require the precise form of the curvature term in \cref{eq:dolbeault_weitzenboeck}, but see \cite{Ma2007} for an explicit computation in the case $p=0$.

Another Weitzenb\"ock type formula for a Hermitian holomorphic vector bundle $E$ over a K\"ahler manifold $(X,\omega)$ is the \defemph{Bochner--Kodaira--Nakano} formula
\begin{equation}\label{eq:bkn}
 \square^E =  \big(d^E_{1,0}d^{E,\fadj}_{1,0} + d^{E,\fadj}_{1,0}d^E_{1,0}\big) + [\Cplxi R^E \evwedge,\Lambda]
\end{equation}
which also expresses $\square^E$ as the sum of a Laplace type operator and a zeroth order term.
In \cref{eq:bkn}, $d^E$ denotes the exterior covariant derivative (see \cref{eq:exterior_covariant_derivative}) associated to the Chern connection on $E$, with $(1,0)$-part $d^E_{1,0}$ (and $(0,1)$-part $\dbar^E$) and $d^{E,\fadj}_{1,0}$ the formal adjoint.
Furthermore, $\Lambda \colon \Lambda^{\bullet,\bullet}T^*X \otimes E \to \Lambda^{\bullet-1,\bullet-1}T^*X \otimes E$ is the adjoint to exterior multiplication with the K\"ahler form $\omega$, the wedge product $\evwedge$ is combined with the evaluation map $\en{E}\otimes E \to E$, and $[\arghere,\arghere]$ is the commutator of endomorphisms.
For a proof, see \cite[Theorem~2.7]{Ohsawa2015}.
Formula~\cref{eq:bkn} has an extension to Hermitian manifolds that are not K\"ahler, with additional torsion terms occurring.
This is due to Demailly~\cite{Demailly1986}, and a proof can also be found in \cite[Theorem~1.4.12]{Ma2007}.

As in the introduction, let $M \subseteq X$ be the closure of a smoothly bounded open subset.

\begin{defn}
 \label{def:B_M_spaces}
 Suppose that $U \subseteq M$ is a (relatively) open subset.
 We define
 \begin{equation}
  \label{eq:B_M_spaces}
  B_{\partial M}(U,E) \coloneqq
  \big\{u \in \Omega_c(U,E) :
  \symb{\dbar^{E,\fadj}}(\nu)u|_{\partial M \cap U} =
  -\ins{(\nu^{0,1})}(u)|_{\partial M \cap U} = 0
  \big\},
 \end{equation}
 where $\nu$ is a normal vector field to $\partial M$, and $\nu^{0,1} = \frac{1}{2}(\nu + \Cplxi J\nu)$ is its component in $T^{0,1}M$.  We denote by $B_{\partial M}^{p,q}(U,E)$ the forms of bidegree $(p,q)$ in $B_{\partial M}(U,E)$.
\end{defn}

Note that if $u \in B_{\partial M}(U,E)$, then also $\ins{\xi}(u) \in B_{\partial M}(U,E)$ for every vector field $\xi$ on $U$, since insertion operators anticommute. Using integration by parts, one can derive the following global version of \cref{eq:bkn}.
We refer to \cite[Theorem~1.4.21]{Ma2007} for a proof.

\begin{thm}[Global Bochner--Kodaira--Nakano formula]
 \label{BKN_boundary}
 Let $M \subseteq X$ be as above, with $X$ K\"ahler, and suppose $E \to M$ is a Hermitian holomorphic vector bundle.
 Then
 \begin{equation}
  \label{eq:BKN_boundary}
  \|\dbar^E u\|^2 + \|\dbar^{E,\fadj}u\|^2 = \|d^{E,\fadj}_{1,0}u\|^2 + \llangle \Cplxi R^E \evwedge \Lambda u,u \rrangle + \int_{\partial M} \mathscr L(u,u)\,d\mu_{\partial M}
 \end{equation}
 holds for all $u \in B_{\partial M}^{n,\bullet}(M,E)$.
\end{thm}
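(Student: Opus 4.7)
The strategy is to pair the local Bochner--Kodaira--Nakano identity~\cref{eq:bkn} with $u$ in $L^2$, exploit bidegree simplifications special to $(n,q)$-forms, and track the boundary contributions produced by integration by parts. First, because $u \in B_{\partial M}^{n,q}(M,E)$ satisfies the $\dbar$-Neumann condition $\ins{\nu^{0,1}}u|_{\partial M}=0$, standard integration by parts gives
\begin{equation*}
  \llangle \square^E u,u \rrangle = \|\dbar^E u\|^2 + \|\dbar^{E,\fadj}u\|^2,
\end{equation*}
without a boundary term. Substituting the pointwise identity~\cref{eq:bkn} into the left side and using that $u$ has bidegree $(n,q)$, we get two immediate simplifications: the form $d^E_{1,0}u$ would be of bidegree $(n+1,q)$ and therefore vanishes, killing the summand $\llangle d^{E,\fadj}_{1,0}d^E_{1,0}u,u\rrangle$; similarly, $R^E\evwedge u = 0$ since $R^E$ has bidegree $(1,1)$, so $[\Cplxi R^E\evwedge,\Lambda]u = \Cplxi R^E\evwedge\Lambda u$.

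What remains is the term $\llangle d^E_{1,0}d^{E,\fadj}_{1,0}u, u \rrangle$, which does not telescope cleanly into $\|d^{E,\fadj}_{1,0}u\|^2$ because $u$ need not satisfy any boundary condition adapted to $d^E_{1,0}$. Integration by parts produces
\begin{equation*}
  \llangle d^E_{1,0}d^{E,\fadj}_{1,0}u,u\rrangle
  = \|d^{E,\fadj}_{1,0}u\|^2 + \int_{\partial M}\big\langle \symb{d^E_{1,0}}(\nu)\, d^{E,\fadj}_{1,0}u,\,u\big\rangle\,d\mu_{\partial M},
\end{equation*}
so, after assembling the pieces, the claim reduces to showing that this boundary integral coincides with $\int_{\partial M}\mathscr L(u,u)\,d\mu_{\partial M}$.

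This last identification is the main technical obstacle. The plan is to pass to a tubular neighborhood of $\partial M$, fix a local defining function $\rho$ (so $\nu$ is proportional to $(d\rho)^\sharp$), and apply the K\"ahler identity $d^{E,\fadj}_{1,0} = \Cplxi[\Lambda,\dbar^E]$ to rewrite the boundary integrand in terms of $\dbar^E u$, $\Lambda u$, and wedge/insertion operations determined by $d\rho$. The $\dbar$-Neumann condition $\ins{\nu^{0,1}}u|_{\partial M}=0$ is a tangential constraint, so its derivatives along $\partial M$ vanish; commuting the remaining $\dbar^E$-derivative past multiplication by $\rho$ transfers derivatives onto the defining function and produces the complex Hessian $\partial\dbar\rho$. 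Contracting $\partial\dbar\rho$ with the $T^{0,1}\partial M$-components of $u$ that survive the boundary condition is, by definition, the Levi form $\mathscr L(u,u)$. Carrying out this splitting of tangential and normal components carefully — and checking that no uncontrolled tangential derivatives of $u$ remain — is the heart of the argument, and is where the K\"ahler assumption on $X$ is essential; the explicit computation is carried out in \cite[Theorem~1.4.21]{Ma2007}.
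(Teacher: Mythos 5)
Your step 1 has a genuine gap. You assert that for $u \in B_{\partial M}^{n,\bullet}(M,E)$ one has $\llangle \square^E u,u \rrangle = \|\dbar^E u\|^2 + \|\dbar^{E,\fadj}u\|^2$ ``without a boundary term,'' but only half of this integration by parts is free of boundary terms. Concretely, writing Green's formula for a first--order operator $D$ as
$\llangle Ds,t\rrangle = \llangle s, D^\fadj t\rrangle + \int_{\partial M}\langle \symb{D}(\nu)s,t\rangle\,d\mu_{\partial M}$, one finds
\begin{equation*}
  \llangle \dbar^E\dbar^{E,\fadj}u,u\rrangle = \|\dbar^{E,\fadj}u\|^2 + \int_{\partial M}\big\langle \dbar^{E,\fadj}u,\ \ins{\nu^{0,1}}u\big\rangle\,d\mu_{\partial M} = \|\dbar^{E,\fadj}u\|^2,
\end{equation*}
since $\ins{\nu^{0,1}}u|_{\partial M}=0$. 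But
\begin{equation*}
  \llangle \dbar^{E,\fadj}\dbar^E u,u\rrangle = \|\dbar^E u\|^2 - \int_{\partial M}\big\langle \ins{\nu^{0,1}}(\dbar^E u),\ u\big\rangle\,d\mu_{\partial M},
\end{equation*}
and the boundary condition on $u$ gives no control on $\ins{\nu^{0,1}}(\dbar^E u)|_{\partial M}$ (this vanishing is precisely the \emph{second} $\dbar$-Neumann condition, imposed on $\dom{\square^E}$ but not on $B_{\partial M}(M,E)$; compare item~\cref{item:dbar_smooth_domain_of_laplacian} in \cref{sec:more_on_dbar}). So your starting identity is off by a boundary term involving the normal derivative of $u$, and every line after step 1 is accordingly missing a contribution.

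A second, related point: the reference you ultimately defer to, \cite[Theorem~1.4.21]{Ma2007} (and likewise the Morrey--Kohn--H\"ormander derivation), never passes through the second-order pairing $\llangle \square^E u,u\rrangle$. It directly expands the first-order quantity $\|\dbar^E u\|^2 + \|\dbar^{E,\fadj}u\|^2$, rewrites $\dbar^{E,\fadj}$ on $(n,q)$-forms via the K\"ahler identity $\dbar^{E,\fadj}=\Cplxi[\Lambda,d^E_{1,0}]$, and integrates by parts \emph{once}, tracking all boundary contributions simultaneously; the Levi form arises from differentiating the constraint $\ins{\nu^{0,1}}u|_{\partial M}=0$ tangentially. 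This avoids the double integration by parts that your route through $\square^E$ requires, and avoids the unjustified boundary cancellation in your step 1. Since the paper's own proof of \cref{BKN_boundary} is just the citation to Ma--Marinescu, you are free to route through that reference, but the intermediate reduction you propose is not what it establishes, and as written it is not correct.
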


The boundary integral in \cref{eq:BKN_boundary} contains the Levi form of $\partial M$.
If $\rho \in C^\infty(X,\R)$ is a defining function for $M$, meaning $M =\rho^{-1}((-\infty,0))$ and $|d\rho|=1$ on $\partial M$, then
\begin{equation}
 \label{eq:modified_levi_form_defining_function}
 \mathscr L(u,v) = \sum_{j,k=1}^n \partial\dbar\rho(w_j,\ol w_k) \langle \ol w^k \wedge \ins{\ol w_j}(u),v \rangle
\end{equation}
on $\partial M$, independent of the chosen defining function.
The manifold with boundary $M \subseteq X$ is called \defemph{$q$-Levi pseudoconvex} if $\mathscr L(\alpha,\alpha) \geq 0$ holds for all $\alpha \in B_{\partial M}^{0,q}(M,\Cplx)$.
The case $q=1$ corresponds to the usual notion of Levi pseudoconvexity.
Note that, according to this definition, the closure $M$ of every smoothly bounded open subset of $X$ is $n$-Levi pseudoconvex, with $n$ the complex dimension of $X$.
Indeed, this follows from the boundary conditions that elements of $B_{\partial M}^{0,n}(M,\Cplx)$ must satisfy, and using an orthonormal basis in \cref{eq:modified_levi_form_defining_function} that includes $\sqrt{2} \nu^{1,0}$.
If $\mathscr L(\alpha,\alpha) \geq 0$ for $\alpha \in B_{\partial M}^{0,1}(M,\Cplx)$, then this inequality continues to hold for $\alpha \in B_{\partial M}^{p,q}(M,\Cplx)$, with $q \geq 1$, see the arguments in \cref{eq:nakano_lower_semiboundedness_percolates} below.
Following the same reasoning, if $M$ is $q$-Levi pseudoconvex, then it is also $q'$-Levi pseudoconvex for every $q' \geq q$.

For $X=\Cplx^n$, equation~\cref{eq:BKN_boundary} is also referred to as the \defemph{Morrey--Kohn--H\"ormander formula}, see \cite[Proposition~2.4]{Straube2010} or \cite[Proposition~4.3.1]{Chen2001}.
Original works include \cite{Hoermander1965,Morrey1958}, but see \cite{Straube2010} for extensive references.

\begin{rem}
 \label{rem:nakano_lower_semibounded}
 \begin{inlineenum}
 \item
  Using $\Lambda = \Cplxi \sum_{j=1}^n \ins{w_j}\ins{\ol w_j}$, it is not hard to see that, on $\Lambda^{n,\bullet} T^*M \otimes E$, the operator $\Cplxi R^E \evwedge \Lambda$ which appears in \cref{eq:BKN_boundary} has the form
  \begin{equation}
   \label{eq:curvature_wedge_lambda}
   \Cplxi R^E \evwedge \Lambda u = \sum_{j,k=1}^n R^E(w_j,\ol w_k) \extp{\ol w^k}\ins{\ol w_j}(u).
  \end{equation}
  Hence, the condition of $q$-Nakano lower semiboundedness from~\cref{eq:nakano_lower_semibounded} can equivalently be described as
  $$ \langle \Cplxi R^E \evwedge \Lambda u, u\rangle \geq c|u|^2 $$
  for all $u \in \Lambda^{n,q}T^*M \otimes E$, see \cref{eq:curvature_wedge_lambda}.
  If \cref{eq:nakano_lower_semibounded} continues to hold for $u \in \Lambda^{p,q}T^*_xM \otimes E_x$ with $0 \leq p \leq n$.
  Moreover, if $E$ is $(q-1)$-Nakano lower semibounded, then it is also $q$-Nakano lower semibounded.
  This can be seen by induction:
  if \cref{eq:nakano_lower_semibounded} is true on $\Lambda^{p,q-1}T^*_x M \otimes E_x$ with $q \geq 2$, then
  \begin{multline}
   \label{eq:nakano_lower_semiboundedness_percolates}
   \sum_{j,k=1}^n \langle (\extp{\ol w^k} \circ \ins{\ol w_j}) \otimes R^E(w_j, \ol w_k)u,u \rangle = \\
   = \frac{1}{q-1} \sum_{m=1}^n \sum_{j,k=1}^n \langle ((\extp{\ol w^k} \circ \ins{\ol w_j}) \otimes R^E(w_j,\ol w_k)) \ins{\ol w_m}(u),\ins{\ol w_m}(u) \rangle \geq \frac{n}{q-1}\, c
  \end{multline}
  for all $u \in \Lambda^{p,q}T^*_x M \otimes E_x$ by \cref{eq:exterior_algebra_identity_2}.
  In particular, $\nakanoc[q]{E} \geq \frac{n}{q-1} \nakanoc[q-1]{E}$, with $\nakanoc[q]{E}$ the largest possible constant $c$ in~\cref{eq:nakano_lower_semibounded}.

 \item
  Due to H\"older's inequality, we always have
  \begin{equation*}
   \sum_{j,k=1}^n \langle (\extp{\ol w^k} \circ \ins{\ol w_j}) \otimes R^E(w_j,\ol w_k) u,u \rangle    \geq - \sum_{j,k=1}^n |R^E(w_j,\ol w_k)| |u|^2    \geq - n |R^E| |u|^2.
  \end{equation*}
  Thus, if $R^E$ is bounded
  , then $E$ is Nakano lower semibounded.
  In particular, if $(M,J,g)$ is a K\"ahler manifold of $0$-bounded geometry, see \cref{sec:bounded_geometry}, then $TM$ (hence also $T^{1,0}M$) is a Nakano lower semibounded vector bundle.

 \item
  As an example, if $E \to M$ and $F \to M$ are two $q$-Nakano lower semibounded vector bundles, then the Whitney sum $E \oplus F$ and the tensor product $E \otimes F$ are again $q$-Nakano lower semibounded, with $\nakanoc[q]{E \oplus F} = \min\{\nakanoc[q]{E},\nakanoc[q]{F}\}$ and $\nakanoc[q]{E \otimes F} = \nakanoc[q]{E} + \nakanoc[q]{F}$.
 \end{inlineenum}
\end{rem}

Formula~\cref{eq:BKN_boundary} has an extension to $(p,q)$-forms for $0 \leq p \leq n$, with a term involving the curvature of $T^{1,0}M$ occurring.
Consider the morphism of complex vector bundles
\begin{equation}
 \label{eq:definition_of_Psi}
 \Psi^E_p \colon \Lambda^{p,\bullet}T^*M \otimes E \to \Lambda^{n,\bullet}T^*M \otimes (E\otimes \Lambda^{n-p,0}TM),
 \quad
 \Psi^E_p(u) = \psum_{|J|=n-p} (w^J \wedge u) \otimes w_J,
\end{equation}
where as usual the primed sum means that the summation is done over all increasing maps $J \colon \{1,\dotsc,n-p\} \to \{1,\dotsc,n\}$, \ie all subsets of $\{1,\dotsc,n\}$ of cardinality $n-p$, and $w^J \coloneqq w^{J(1)} \wedge \dotsb \wedge w^{J(n-p)}$, with analogous definition for $w_J$.
From this, it is immediate that $\Psi^E_p$ is an isometry, with inverse given by the contraction map $\Lambda^{n,0}T^*M \otimes \Lambda^{n-p,0}TM \to \Lambda^{p,0}T^*M$, up to a sign factor.
Using local holomorphic sections of $\Lambda^{n-p,0}TM$, $\Lambda^{n,0}T^*M$, and $E$, one readily shows that $\dbar^{E\otimes \Lambda^{n-p,0}TM} \circ \Psi^E_p = (-1)^{n-p} \Psi^E_p \circ \dbar^E$, and because $\Psi^E_p$ is an isometry, it also intertwines the formal adjoints of the relevant Dolbeault operators.
It is also clear that $\Psi^E_p$ maps $B_{\partial M}^{p,\bullet}(U,E)$ to $B_{\partial M}^{n,\bullet}(U,E \otimes \Lambda^{n-p,0}TM)$ for every open $U \subseteq M$.

\begin{cor}
 \label{bkn_boundary_localized}
 For any open subset $U \subseteq M$, and any $u \in B_{\partial M}^{p,\bullet}(U,E)$, we have
 \begin{equation}
  \label{eq:bkn_boundary_localized}
  \|\dbar^E u\|^2 + \|\dbar^{E,\fadj} u\|^2
  = \big\|d^{E \otimes \Lambda^{n-p,0}TM,\fadj}_{1,0} \widetilde u\big\|^2
  + \big\llangle \Cplxi R^{E\otimes \Lambda^{n-p,0}TM} \evwedge \Lambda \widetilde u,\widetilde u \big\rrangle
  + \int_{\partial M \cap U} \mathscr L(u,u)\,d\mu_{\partial M}
 \end{equation}
 with $\widetilde u \coloneqq \Psi^E_p(u) \in B_{\partial M}^{n,\bullet}(U,E\otimes \Lambda^{n-p,0}TM)$.
\end{cor}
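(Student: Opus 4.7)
The plan is to reduce the statement directly to the global Bochner--Kodaira--Nakano formula (\cref{BKN_boundary}) applied to $\widetilde u = \Psi^E_p(u)$ in the twisted bundle $E \otimes \Lambda^{n-p,0}TM$. By the properties of $\Psi^E_p$ collected just above the statement, $\widetilde u$ belongs to $B_{\partial M}^{n,\bullet}(U, E \otimes \Lambda^{n-p,0}TM)$; extending by zero to all of $M$ preserves compact support as well as the boundary conditions, making $\widetilde u$ an admissible input for \cref{BKN_boundary} with $E \otimes \Lambda^{n-p,0}TM$ in place of $E$.

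Once the theorem is applied to $\widetilde u$, the right-hand side of \cref{eq:bkn_boundary_localized} appears almost verbatim, except that the $\dbar$-norms on the left-hand side of \cref{eq:BKN_boundary} are expressed in terms of $\widetilde u$ rather than $u$. To reconcile these, I would invoke the intertwining relation $\dbar^{E\otimes \Lambda^{n-p,0}TM} \circ \Psi^E_p = (-1)^{n-p}\,\Psi^E_p \circ \dbar^E$ recorded before the statement; combined with the isometry property of $\Psi^E_p$ it gives $\|\dbar^{E\otimes \Lambda^{n-p,0}TM} \widetilde u\| = \|\dbar^E u\|$, and, after passing to Hilbert-space adjoints, $\|\dbar^{E\otimes \Lambda^{n-p,0}TM,\fadj}\widetilde u\| = \|\dbar^{E,\fadj}u\|$.

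The least mechanical step is to identify the two boundary integrals, which amounts to checking pointwise on $\partial M \cap U$ that $\mathscr L(\widetilde u,\widetilde u) = \mathscr L(u,u)$. Inspecting the expression \cref{eq:modified_levi_form_defining_function}, the key observation is that $\Psi^E_p$ is assembled from wedge products with \emph{holomorphic} one-forms $w^J$, so at each point both $\extp{\ol w^k}$ and $\ins{\ol w_j}$ commute with $\Psi^E_p$ viewed as a fibrewise operator. The isometry of $\Psi^E_p$ then converts $\langle \ol w^k \wedge \ins{\ol w_j}(\widetilde u), \widetilde u \rangle$ into $\langle \ol w^k \wedge \ins{\ol w_j}(u), u \rangle$, producing the desired identity of Levi-form integrands. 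Combining these three observations with \cref{BKN_boundary} applied to $\widetilde u$ yields \cref{eq:bkn_boundary_localized}, with the integrals automatically confined to $U$ and $\partial M \cap U$ by the support of $u$.
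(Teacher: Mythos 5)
Your proof is correct and follows essentially the same route as the paper's (which is given as a one-liner): apply \cref{BKN_boundary} to $\widetilde u$ in the twisted bundle, use the intertwining and isometry properties of $\Psi^E_p$ to match the $\dbar$-norms, and identify the Levi-form integrands. One small imprecision worth noting: $\extp{\ol w^k}$ and $\ins{\ol w_j}$ each commute with $\Psi^E_p$ only up to the sign $(-1)^{n-p}$ (from moving a $(0,1)$-cotangent direction past the degree-$(n-p)$ form $w^J$, respectively from the Leibniz rule for insertion), but their composite $\extp{\ol w^k}\circ\ins{\ol w_j}$ --- which is exactly what appears in the Levi form \cref{eq:modified_levi_form_defining_function} --- commutes with $\Psi^E_p$ with no sign, so the conclusion $\mathscr L(\widetilde u,\widetilde u)=\mathscr L(u,u)$ still goes through.
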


\begin{proof}
 This is immediate from \cref{BKN_boundary}, the above discussion on $\Psi^E_p$, using that $u$ is supported on $U$, and observing that $\mathscr L(\Psi^E_p(u),\Psi^E_p(u)) = \mathscr L(u,u)$.
\end{proof}

\subsection{More on the \texorpdfstring{$\dbar^E$}{dbar-E}-Neumann problem}
\label{sec:more_on_dbar}

We denote by $Q^E$ the quadratic form associated to the self-adjoint operator $\square^E$, as defined through spectral theory (see \cite{Schmuedgen2012} for an introduction to quadratic forms on Hilbert spaces).
Then $Q^E$ contains the same information as $\square^E$, and since $\square^E$ is nonnegative, it holds that $\dom{Q^E} = \dom{(\square^E)^{1/2}}$ and $Q^E(u,v) = \llangle (\square^E)^{1/2}u, (\square^E)^{1/2}v \rrangle$.
As $\square^E$ is the Laplacian of a Hilbert complex,\footnote{  By this we mean a (co)chain complex of closed and densely defined operators between Hilbert spaces, see \cite{Bruening1992} or also \cite{Berger2016}.
}
its quadratic form has the more accessible expression
$$ Q^E(u,v) = \llangle \dbar^E_\maxx u, \dbar^E_\maxx v \rrangle + \llangle \dbar^{E,*}_\maxx u, \dbar^{E,*}_\maxx v \rrangle $$
for $u,v \in \dom{Q^E} = \dom{\dbar^E_\maxx} \cap \dom{\dbar^{E,*}_\maxx}$, see the arguments in \cite[Proposition~2.3]{Goldshtein2011}.
One can show (see, \eg \cite{Folland1972}) that
\begin{enumerate}
\item\label{item:dbar_smooth_domain_of_adjoint}
 $\Omega_c(M,E) \cap \dom{\dbar^{E,*}_\maxx} = B_{\partial M}(M,E)$,
\item\label{item:dbar_smooth_domain_of_quadratic_form}
 $\Omega_c(M,E) \cap \dom{Q^E} = B_{\partial M}(M,E)$,
 and
\item\label{item:dbar_smooth_domain_of_laplacian}
 $\Omega_c(M,E) \cap \dom{\square^E} = \{u \in B_{\partial M}(M,E) : \dbar^E u \in B_{\partial M}(M,E)\}$.
\end{enumerate}
In particular, $\dbar^{E,*}_\maxx u = \dbar^{E,\fadj}u$ for $u \in B_{\partial M}(M,E)$ by \cref{eq:extension_of_diff_op_restrictions_to_diff_op}.

If $U \subseteq M$ is (relatively) open, then we denote by $\square^E_U$ the self-adjoint operator $(\square^E)_U$ on $L^2_{\bullet,\bullet}(U,E) = L^2_{\bullet,\bullet}(U,E)$, see \cref{def:localized_differential_operator}, with associated quadratic form $Q^E_U \coloneqq Q_{\square^E,U}$.
We write $\square^E_{p,q}$ and $\square^E_{U,p,q}$ for the restrictions of $\square^E$ and $\square^E_U$ to $L^2_{p,q}(M,E)$ and $L^2_{p,q}(U,E)$, respectively.

\begin{rem}
 The quadratic form $Q^{E|_{U}}$ is an extension of $Q^E_U$, in the sense that $\dom{Q^E_U} \subseteq \dom{Q^{E|_{U}}}$ and
 \begin{equation}
  \label{eq:dbar_localization_and_restriction}
  Q^{E|_{U}}(u,u) = Q^E_U(u,u)
 \end{equation}
 for all $u \in \dom{Q^E_U}$.
 Intuitively, this is because $Q^E_U$ requires Dirichlet boundary conditions on $\partial U \cap \interior M$, while the self-adjoint operator associated to $Q^{E|_{U}}$ only requires the weaker $\dbar$-Neumann boundary conditions.
 
 To formally show \cref{eq:dbar_localization_and_restriction}, let $u \in \dom{Q^E_U}$.
 Then $u_0$, defined as the extension of $u$ to $M$ by zero, see \cref{localized_form_is_closable}, belongs to $\dom{Q^E} =\dom{\dbar^E_\maxx} \cap \dom{\dbar^{E,\fadj}_\minn}$, and clearly $u = (u_0)|_{U} \in \dom{\dbar^{E|_{U}}_\maxx}$.
 For all $k \in \N$, we find $v_k \in \Omega_\cc(U,E)$ such that $v_k \to u$ in $L^2_{\bullet,\bullet}(U,E)$ and $\|\dbar^{E,\fadj}_\minn u_0 - \dbar^{E,\fadj} v_k\| \leq \frac{1}{k}$.
 This may be done by first approximating $u_0$ by $\widetilde v_k \in \dom{Q^E} \subseteq \dom{\dbar^{E,\fadj}_\minn}$ with $\supp(\widetilde v_k) \subseteq U$, and then approximating, in the norm of $\dom{\dbar^{E,\fadj}_\minn}$, each $\widetilde v_k$ by elements of $\Omega_\cc(U,E)$.
 Since $(\dbar^{E,\fadj} v_k)|_{U} = \dbar^{E|_{U},\fadj} (v_k|_{U})$, it follows that $(v_k|_{U})_{k \in \N}$ is Cauchy in $\dom{\dbar^{E|_{U},\fadj}_\minn}$, hence converges to $u$ in this space due to the convergence in $L^2_{\bullet,\bullet}(U,E)$.
 Thus, $u$ belongs to $\dom{\dbar^{E|_{U},\fadj}_\minn}$, and
 \[ \dbar^{E|_{U},\fadj}_\minn u = \lim_k (\dbar^{E,\fadj}v_k)|_{U} = (\dbar^{E,\fadj}_\minn u_0)|_{U}, \]
 so that $Q^{E|_{U}}(u,u) = Q^E(u_0,u_0) = Q^E_U(u,u)$, as claimed.
\end{rem}

\begin{prop}
 \label{form_core_for_square}
 If $U \subseteq M$ is open, then the space $B_{\partial M}(U,E)$ from \cref{def:B_M_spaces} is a form core for $\square^E_U$.
\end{prop}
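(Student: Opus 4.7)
The inclusion $B_{\partial M}(U,E) \subseteq \dom{Q^E_U}$ is straightforward: for $w \in B_{\partial M}(U,E)$ the extension by zero $w_0$ lies in $B_{\partial M}(M,E) \subseteq \dom{Q^E}$ by item~(ii) preceding the proposition, with $\supp(w_0) \subseteq U$, so $w = w_0|_U \in \dom{\widetilde Q_{\square^E,U}} \subseteq \dom{Q^E_U}$ and $Q^E_U(w,w) = Q^E(w_0,w_0)$. The real content is the density of $B_{\partial M}(U,E)$ in $\dom{Q^E_U}$ for the form graph norm.

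By construction, $\dom{Q^E_U}$ is the graph-norm closure of $\{s|_U : s \in \dom{Q^E},\, \supp(s) \subseteq U\}$, and by \cref{extension_by_zero_belongs_to_domain} extension by zero identifies this subspace isometrically with $\{s \in \dom{Q^E} : \supp(s) \subseteq U\}$ in the $Q^E$ graph norm. It therefore suffices to approximate any such $s$ in the $Q^E$ graph norm by elements of $B_{\partial M}(M,E)$ with compact support in $U$, that is, by elements of $B_{\partial M}(U,E)$. My plan is to perform this approximation in two stages: first a cutoff bringing the support inside a compact subset of $U$, and then a Friedrichs-type mollification producing a smooth form that still satisfies the $\dbar$-Neumann condition on $\partial M \cap U$.

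For the cutoff stage, let $\chi_n \in C^\infty_c(U,[0,1])$ with $\chi_n \to 1$ pointwise on $U$. The Leibniz rule from \cref{leibniz_formula_for_weak_and_strong_extensions}, applied to both $\dbar^E_\maxx$ and $\dbar^{E,*}_\maxx = (\dbar^{E,\fadj})_\minn$, guarantees $\chi_n s \in \dom{Q^E}$ with compact support in $U$, and
\[
 \dbar^E_\maxx(\chi_n s) - \dbar^E_\maxx s = (\chi_n - 1)\,\dbar^E_\maxx s + \symb{\dbar^E}(d\chi_n)\,s,
\]
together with the analogous identity for $\dbar^{E,\fadj}_\minn$. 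Since $\supp(\dbar^E_\maxx s) \subseteq \supp(s) \subseteq U$, the first summand tends to zero in $L^2$ by dominated convergence. For the mollification stage I would invoke the classical Friedrichs--Folland--Kohn construction (cf.\ \cite{Folland1972}): using a finite partition of unity subordinate to a cover of the compact support of $\chi_n s$ by interior and boundary charts of $U$, apply ordinary convolution in the interior and tangential mollification near $\partial M$ that preserves the boundary condition $\ins{\nu^{0,1}}(\cdot)|_{\partial M} = 0$, then patch. A diagonal argument then produces a single sequence in $B_{\partial M}(U,E)$ approximating $s$.

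The delicate step, and the main obstacle, is controlling the second summand $\|\symb{\dbar^E}(d\chi_n)\,s\|^2 \lesssim \int_U |d\chi_n|^2 |s|^2\,d\mu_g$ in the cutoff stage when $\supp(s)$ fails to be compactly contained in $U$ (for instance when it accumulates at $\partial U \cap \interior M$, or when $M$ is non-compact and $\supp(s)$ extends to infinity within $U$). The remedy is to fix a compact exhaustion $K_1 \subset K_2 \subset \dotsb$ of $U$ with $\int_{U\setminus K_n}|s|^2\,d\mu_g$ decaying as rapidly as desired, and to take $\chi_n$ equal to $1$ on $K_n$ and supported in a slightly larger compact $K_n' \subset\subset U$, with the thickness of the transition region $K_n' \setminus K_n$ taken large enough (depending on $s$) to ensure that $\|d\chi_n\|_\infty^2 \int_{K_n' \setminus K_n}|s|^2\,d\mu_g \to 0$; such a choice is always possible since $K_n'$ can be enlarged inside $U$ to allow an arbitrarily gentle transition.
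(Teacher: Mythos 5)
Your overall architecture (show $B_{\partial M}(U,E)\subseteq\dom{Q^E_U}$, then density via cutoff followed by Friedrichs mollification) matches the paper's proof, and the mollification stage is morally the same as the paper's reduction to the known compact-manifold result via \cite[Lemma~3.5.1]{Ma2007}/\cite{Hoermander1965}. The gap is in your cutoff stage, and it is a genuine one.

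You demand $\chi_n\in C^\infty_c(U,[0,1])$, i.e.\ $\supp(\chi_n)\subset\subset U$, and then claim the error $\|d\chi_n\|_\infty^2\int_{K_n'\setminus K_n}|s|^2\,d\mu_g$ can be made to tend to zero because ``$K_n'$ can be enlarged inside $U$ to allow an arbitrarily gentle transition.'' This is false whenever $\partial U\cap\interior M\neq\emptyset$: once the exhaustion $K_n$ comes within distance $\varepsilon_n$ of $\partial U\cap\interior M$, the compact $K_n'\subset\subset U$ can extend at most $\varepsilon_n$ further, forcing $\|d\chi_n\|_\infty\gtrsim 1/\varepsilon_n$, and there is no choice of $K_n'$ that relieves this squeeze. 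The Dirichlet condition that $\dom{Q^E_U}$ encodes at $\partial U\cap\interior M$ is a closure condition, not a pointwise decay rate, so you cannot compensate by asking $\int_{K_n'\setminus K_n}|s|^2$ to decay sufficiently fast. (A gentle transition \emph{is} available as you push to infinity in a complete $M$ --- that is exactly \cref{characterization_of_complete_manifolds} --- but that argument lives on $M$, not on the open subset $U$, where it is blocked by the hard boundary $\partial U\cap\interior M$.)

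The paper sidesteps this by disentangling the two roles of the cutoff. First, for $U=M$, it invokes \cref{bounded_symbol_compact_supported_sections_dense} (via \cref{ex:dirac_operator_squared_leibniz_rule,ex:hilbert_complexes_leibniz_rule}), which uses the completeness of $M$ and \cref{characterization_of_complete_manifolds} to cut to compact support with $|d\chi_k|\leq 2^{-k}$, then reduces the mollification to the compact-manifold case by enclosing $\supp(u)$ in a compact manifold with boundary $X\subseteq M$. Second, for general $U$, it does \emph{not} shrink a moving cutoff toward $\partial U$: it approximates $u$ in $\dom{Q^E}$ by globally defined $u_k\in B_{\partial M}(M,E)$ and then multiplies by a single \emph{fixed} $\varphi$ with $\supp(\varphi)\subseteq U$ and $\varphi\equiv 1$ on $\supp(u)$ (which exists by smooth Urysohn since $\supp(u)$ is closed in $M$ and contained in the open set $U$). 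Because $\varphi$ is fixed and $\varphi u=u$, the Cauchy estimate \cref{eq:form_core_for_square_proof_1} gives convergence of $\varphi u_k|_U$ without ever needing to gauge the transition width against $s$.

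A small modification would rescue your plan: drop the requirement $\chi_n\in C^\infty_c(U)$ and instead take $\chi_n\in C^\infty_c(M,[0,1])$ from \cref{characterization_of_complete_manifolds}, with $|d\chi_n|\leq 2^{-n}$. Since $\supp(s)$ is closed in $M$ and contained in $U$, the product $\chi_n s$ automatically has $\supp(\chi_n s)\subseteq\supp(\chi_n)\cap\supp(s)$ compact in $M$ and contained in $U$, hence compact as a subset of $U$ --- no transition near $\partial U$ is needed at all, and the error term is bounded by $2^{-n}\|s\|$. With that fix and your mollification stage, you would be reproducing the paper's argument in a slightly different packaging.
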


\begin{proof}
 We shall use the known fact that $B_{\partial M}(M,E)$ is a form core for $\square^E$ if $M$ is compact, \footnote{   The statement can be found in \cite[Lemma~3.5.1]{Ma2007}, where a reference is made to \cite[Proposition~1.2.4]{Hoermander1965}.
   A proof for $M$ a domain in $\Cplx^n$ can also be found in \cite[Proposition~2.3]{Straube2010}.
 }
 the proof of which requires careful use of mollifiers.
 We first treat the case $U=M$.
 By \cref{ex:dirac_operator_squared_leibniz_rule,ex:hilbert_complexes_leibniz_rule,bounded_symbol_compact_supported_sections_dense}, we know that the elements of $\dom{Q^E} = \dom{\dbar^E_\maxx+\dbar^{E,*}_\maxx}$ with compact support in $M$ are dense in $\dom{Q^E}$.
 If $u \in \dom{Q^E}$ has compact support, choose a compact manifold with boundary $X \subseteq M$ such that $\supp(u) \subseteq V \coloneqq (\partial M \cap X) \cup \interior X$, an open subset of $M$.
 Then $u|_V \in \dom{Q^E_V} \subseteq \dom{Q^{E|_V}} = \dom{Q^{E|_X}}$, see \cref{eq:dbar_localization_and_restriction}, and by the aforementioned result for compact manifolds, there exist $v_k \in B_{\partial X}(X,E)$ with $v_k \to u|_{X}$ as $k \to \infty$ in $\dom{Q^{E|_X}}$.
 Let $\varphi \in C^\infty_c(M,[0,1])$ with $\varphi = 1$ on $\supp(u)$.
 Then $\varphi v_k \in \Omega_c(M,E)$ and $\ins{(\nu^{0,1})}(\varphi v_k) = 0$ on $\partial M \cap \partial X$, and $\varphi v_k = 0$ on $\partial M \setminus \partial X$ anyways, so $\varphi v_k \in B_{\partial M}(M,E)$.
 By \cref{eq:extension_of_diff_op_restrictions_to_diff_op} and since $\sqrt{2}(\dbar^E + \dbar^{E,\fadj})$ is a Dirac type operator, we have the estimate
 \begin{equation}
  \label{eq:form_core_for_square_proof_1}
  Q^E(\varphi(v_k - v_j),\varphi(v_k - v_j))
  \leq \|d\varphi\|^2_{L^\infty(M,T^*M)} \|v_k - v_j\|_{L^2_{\bullet,\bullet}(\interior X,E)}^2 + 2 Q^{E|_X}(v_k - v_j,v_k - v_j).
 \end{equation}
 Thus, $(\varphi v_k)_{k \in \N}$ is Cauchy in $\dom{Q^E}$, hence convergent, and the limit agrees with $u$ by the convergence in $L^2_{\bullet,\bullet}(M,E)$.

 Now let $U \subseteq M$ be an arbitrary open subset.
 By the definition of $Q^E_U$, it suffices to show that every $u|_U$ with $u \in \dom{Q^E}$ and $\supp(u) \subseteq U$ can be approximated in the norm of $\dom{Q^E_U}$ by elements of $B_{\partial M}(U,E)$.
 By the above, we obtain $u_k \in B_{\partial M}(M,E)$ with $u_k \to u$ in $\dom{Q^E}$.
 Let $\varphi \in C^\infty(M,[0,1])$ be such that $\supp(\varphi) \subseteq U$ and $\varphi|_{\supp(u)} = 1$.
 Clearly, $\varphi u_k|_U \in B_{\partial M}(U,E)$, and a computation as in \cref{eq:form_core_for_square_proof_1} again gives convergence of $\varphi u_k|_U$ to $u|_U$ in $\dom{Q^E_U}$.
\end{proof}

Note that if $M$ is complete and without boundary, then $\square^E$ is essentially self-adjoint on $\Omega_c^{\bullet,\bullet}(M,E) = \Omega_\cc^{\bullet,\bullet}(M,E)$, as is indeed the case for all positive integer powers of formally self-adjoint first order differential operators on complete manifolds that satisfy the symbol bound~\cref{eq:bounded_symbol_compact_supported_sections_dense}, see \cite[Theorem~2.2]{Chernoff1973}.

 \section{Proof of \cref{spectral_percolation_specific}}
\label{sec:proof_of_percolation}

We establish \cref{spectral_percolation_specific} through a series of auxiliary results.

\begin{lem}
 \label{percolation_derivative_estimate}
 Let $U \subseteq M$ be open and suppose that $(w_j)_{j=1}^n$ is an orthonormal frame of $T^{1,0}U$.
 Then
 \[ \sum_{j=1}^n \big|d^{E,\fadj}_{1,0}(\ins{\ol w_j}(u))\big|^2 \leq 2q\big|d^{E,\fadj}_{1,0} u\big|^2 + \Big(2n \max\big\{|\nabla \ol w_k|^2 : 1 \leq k \leq n\big\}\Big) |u|^2 \]
 pointwise on $U$ for every $u \in \Omega^{p,q}(U,E)$.
\end{lem}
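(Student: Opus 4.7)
The plan is to establish the pointwise anticommutator identity
\[
 d^{E,\fadj}_{1,0}(\ins{\ol w_j}(u)) + \ins{\ol w_j}(d^{E,\fadj}_{1,0}(u)) = A_j u,
\]
with $A_j$ a $C^\infty$-linear (hence tensorial) endomorphism of $\Lambda^{p,q}T^*U\otimes E$ whose pointwise norm is controlled by $\max_k|\nabla\ol w_k|$. Once this is in hand, rewriting it as $d^{E,\fadj}_{1,0}(\ins{\ol w_j}(u)) = -\ins{\ol w_j}(d^{E,\fadj}_{1,0}(u)) + A_j u$, applying the elementary inequality $|a+b|^2\leq 2|a|^2+2|b|^2$, and summing over $j$ yields the stated estimate: \cref{eq:exterior_algebra_identity_1} gives $\sum_j|\ins{\ol w_j}(d^{E,\fadj}_{1,0}u)|^2 = q|d^{E,\fadj}_{1,0}u|^2$, since $d^{E,\fadj}_{1,0}u$ has bidegree $(p-1,q)$, while $\sum_j|A_j u|^2$ is bounded by $n\max_k|\nabla\ol w_k|^2|u|^2$.

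That $A_j := \{d^{E,\fadj}_{1,0},\ins{\ol w_j}\}$ is of order zero follows from a principal-symbol calculation: since $\symb{d^{E,\fadj}_{1,0}}(\xi)=-\ins{\xi^{\sharp,1,0}}$ and any two contractions anticommute, the symbol of the anticommutator vanishes. A direct check of $C^\infty$-linearity in $u$, using $\symb{d^{E,\fadj}_{1,0}}(df) = -\ins{(df)^{\sharp,1,0}}$, confirms this because the cross terms produced on multiplying $u$ by $f\in C^\infty(U)$ reduce to $-\{\ins{(df)^{\sharp,1,0}},\ins{\ol w_j}\}(u) = 0$.

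To identify and bound $A_j$ explicitly, I would work in the local frame and integrate by parts starting from $d^E_{1,0} = \sum_k\extp{w^k}\nabla_{w_k}$, obtaining
\[
 d^{E,\fadj}_{1,0} v = -\sum_k \ins{w_k}(\nabla_{\ol w_k}v) + Z(v),
\]
where $\nabla$ is the connection on $\Lambda^{\bullet,\bullet}T^*M\otimes E$ induced by the Levi--Civita and Chern connections, and $Z$ is a zeroth-order operator whose coefficients are built from $\nabla_{\ol w_k}w_k$ and $\diverg(\ol w_k)$. Expanding both $d^{E,\fadj}_{1,0}(\ins{\ol w_j}u)$ and $\ins{\ol w_j}(d^{E,\fadj}_{1,0}u)$ via this formula and using the Leibniz rule $\nabla_{\ol w_k}(\ins{\ol w_j}u) = \ins{\nabla_{\ol w_k}\ol w_j}(u)+\ins{\ol w_j}(\nabla_{\ol w_k}u)$ together with $\ins{\ol w_j}\ins{w_k}=-\ins{w_k}\ins{\ol w_j}$, the first-order contributions cancel, leaving an algebraic expression involving contractions by $\nabla_{\ol w_k}\ol w_j$, $\nabla_{\ol w_k}w_k$, and $\diverg(\ol w_k)$---each bounded pointwise by a universal multiple of $\max_k|\nabla\ol w_k|$ thanks to $|\nabla w_k|=|\nabla\ol w_k|$ and the sub-multiplicativity $|\ins{X}\alpha|\leq|X||\alpha|$.

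I expect the main obstacle to be the bookkeeping required to pin down the constant $n$ in $\sum_j|A_j u|^2\leq n\max_k|\nabla\ol w_k|^2|u|^2$: on a Hermitian (non-K\"ahler) manifold the contributions from $Z(\ins{\ol w_j}u)$ and $\ins{\ol w_j}(Z(u))$ do not individually vanish (they would on a K\"ahler manifold, where the Levi--Civita connection preserves type and $\nabla_{\ol w_k}w_k$ is purely $(1,0)$, so its insertion anticommutes with $\ins{\ol w_j}$). One has to verify that after summing over $j$ their combined algebraic contribution still fits under the stated constant, a cancellation that reflects the same anticommutation principle removing the first-order terms.
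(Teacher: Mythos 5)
Your plan has the right core idea and follows the same strategy as the paper: realize that $A_j := d^{E,\fadj}_{1,0}\circ\ins{\ol w_j} + \ins{\ol w_j}\circ d^{E,\fadj}_{1,0}$ is a zeroth-order (tensorial) operator, apply $|a+b|^2\leq 2|a|^2+2|b|^2$, and use \cref{eq:exterior_algebra_identity_1} (with $d^{E,\fadj}_{1,0}u$ of bidegree $(p-1,q)$) to get the factor $2q$. However, the argument is incomplete precisely where you yourself flag the difficulty: you establish that $A_j$ is of order zero via a symbol computation, but never identify it, so the bound $\sum_j|A_j u|^2\leq n\max_k|\nabla\ol w_k|^2|u|^2$ remains an unproved assertion rather than a conclusion. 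Since that bound \emph{is} the lemma, what you have is a plan, not a proof.

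The paper closes this gap with a shorter calculation than the local-frame expansion you sketch. It writes
\[
\ins{\ol w_j}\circ d^{E,\fadj}_{1,0} = \big(d^E_{1,0}\circ\extp{\ol w^j}\big)^\fadj
\]
and applies the Leibniz rule $d^E_{1,0}(\ol w^j\wedge v) = \partial\ol w^j\wedge v - \ol w^j\wedge d^E_{1,0}v$ (the $(1,0)$ part of \cref{eq:exterior_covariant_derivative}), so that after taking formal adjoints the anticommutator is identified explicitly as $A_j=\extp{\partial\ol w^j}^\fadj$. The pointwise operator norm bound $|\extp{\partial\ol w^j}^\fadj| = |\extp{\partial\ol w^j}| \leq |\partial\ol w^j|\leq|\nabla\ol w^j|=|\nabla\ol w_j|$ is then immediate, and summing gives the constant $2n$. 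Note that this identification makes no K\"ahler assumption, and in particular sidesteps the bookkeeping you worry about with the zeroth-order terms $Z$ in $d^{E,\fadj}_{1,0}$: by working at the level of the formal adjoint of the Leibniz identity, one never has to expand $d^{E,\fadj}_{1,0}$ in a frame at all. Your route could in principle be completed, but you would have to carry out the frame expansion and re-derive (in pieces) the same identity $A_j=\extp{\partial\ol w^j}^\fadj$; as it stands, the crucial estimate is missing.
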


\begin{proof} Let $X$ be a complex vector field on $M$.
 We have
 \[
 \ins{X} \circ d^{E,\fadj}_{1,0}
 = (d^E_{1,0} \circ \extp{X^\flat})^\fadj
 = \extp{\partial(X^\flat)}^\fadj - (\extp{X^\flat} \circ d^E_{1,0})^\fadj
 = \extp{\partial(X^\flat)}^\fadj - d^{E,\fadj}_{1,0}\circ \ins{X},
 \]
 where $\extp{\alpha}$ is exterior multiplication with $\alpha \in \Lambda^{\bullet,\bullet}T^*M$.
 Therefore,
 \begin{align*}
  \sum_{j=1}^n \big|d^{E,\fadj}_{1,0}(\ins{\ol w_j}(u))\big|^2
   & \leq 2 \sum_{j=1}^n \big|\ins{\ol w_j}\big(d^{E,\fadj}_{1,0} (u)\big)\big|^2 + 2 \sum_{j=1}^n |\extp{\partial\ol w^j}^\fadj u|^2 \\
   & \leq 2q|d^{E,\fadj}_{1,0}u|^2 + 2 \sum_{j=1}^n |\extp{\partial\ol w^j}^\fadj|^2 |u|^2
 \end{align*}
 by \cref{eq:exterior_algebra_identity_1}, where $|\extp{\partial\ol w^j}^\fadj|$ denotes the (fiberwise) operator norm.
 Now
 $
 |\extp{\partial \ol w^j}^\fadj|
 \leq |\partial \ol w^j|
 \leq |\nabla \ol w^j|
 = |\nabla \ol w_j|$ ,
 which finishes the proof.
\end{proof}

\begin{lem}
 \label{spectral_percolation_localized}
 Let $U \subseteq M$ be an open subset of $M$ with trivial tangent bundle.
 Let $0 \leq p \leq n$, $1 \leq q \leq n$, and assume that $M$ is $q$-Levi pseudoconvex at every point of $U \cap \partial M$, and that $E|_U \otimes \Lambda^{n-p,0}TU$ is $q$-Nakano lower semibounded.
 Then
 \begin{equation}
  \label{eq:spectral_percolation_localized}
  \frac{1}{q}\sum_{j=1}^n Q^E_U(\ins{\ol w_j}(u),\ins{\ol w_j}(u)) \leq 2 Q^E_U(u,u) + \frac{2n \kappa - c(q+1)}{q} \|u\|^2
 \end{equation}
 for all $u \in B_{\partial M}^{p,q}(U,E)$ and every orthonormal frame $(w_j)_{j=1}^n$ of $T^{1,0}X|_U$, and where $c \coloneqq \nakanoc[q]{E\otimes \Lambda^{n-p,0}TM}$ and $\kappa \coloneqq \max\big\{|\nabla \ol w_j|^2 : 1 \leq j \leq n\big\}$.
\end{lem}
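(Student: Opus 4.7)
\medskip

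The plan is to combine the global Bochner--Kodaira--Nakano formula (\cref{bkn_boundary_localized}) applied to $u$ \emph{and} to each $u_j \coloneqq \ins{\ol w_j}(u)$, using \cref{percolation_derivative_estimate} together with the algebraic identity~\cref{eq:exterior_algebra_identity_2} to relate the terms that appear. Write $\tilde E \coloneqq E \otimes \Lambda^{n-p,0}TM$ and $\tilde u \coloneqq \Psi^E_p(u) \in B_{\partial M}^{n,q}(U,\tilde E)$. First observe that since insertion operators anticommute and $u \in B_{\partial M}^{p,q}(U,E)$, each $u_j = \ins{\ol w_j}(u)$ lies in $B_{\partial M}^{p,q-1}(U,E)$, so \cref{bkn_boundary_localized} applies both to $u$ and to every $u_j$. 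Moreover, since $\ins{\ol w_j}$ anticommutes with $w^J\wedge$ for the $(1,0)$ multi-index $J$ used to define $\Psi^E_p$, one has $\ins{\ol w_j}(\tilde u) = (-1)^{n-p}\Psi^E_p(u_j)$, and $\Psi^E_p$ is a fiberwise isometry.

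Applying \cref{bkn_boundary_localized} to $u$ yields
\begin{equation*}
 Q^E_U(u,u) = \|d_{1,0}^{\tilde E,\fadj}\tilde u\|^2 + \llangle \Cplxi R^{\tilde E}\evwedge\Lambda\tilde u, \tilde u\rrangle + \int_{\partial M \cap U}\mathscr L(u,u)\,d\mu_{\partial M}. \tag{$\ast$}
\end{equation*}
Applying it to each $u_j$ and summing gives an analogous expression for $\sum_j Q^E_U(u_j,u_j)$. I then handle the three resulting sums separately. For the $d_{1,0}^{\fadj}$-term, I invoke \cref{percolation_derivative_estimate} with $\tilde u$ in place of $u$ and integrate, obtaining
\begin{equation*}
 \sum_j \|d_{1,0}^{\tilde E,\fadj}\Psi^E_p(u_j)\|^2
 = \sum_j \|d_{1,0}^{\tilde E,\fadj}\ins{\ol w_j}(\tilde u)\|^2
 \leq 2q\,\|d_{1,0}^{\tilde E,\fadj}\tilde u\|^2 + 2n\kappa\,\|u\|^2.
\end{equation*}
For the curvature and Levi terms, I expand $\Cplxi R^{\tilde E}\evwedge\Lambda = \sum_{j,k} R^{\tilde E}(w_j,\ol w_k)\otimes (\extp{\ol w^k}\ins{\ol w_j})$ on $(n,\bullet)$-forms (\cref{eq:curvature_wedge_lambda}) and, similarly, $\mathscr L(v,v) = \sum_{j,k}\partial\dbar\rho(w_j,\ol w_k)\langle\ol w^k\wedge\ins{\ol w_j}(v),v\rangle$. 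Since each summand has the shape $\alpha\wedge\ins{\xi}$, \cref{eq:exterior_algebra_identity_2} applied to $\tilde u$ (or to $u$ on $\partial M$) yields the combinatorial identities
\begin{equation*}
 \sum_j \llangle \Cplxi R^{\tilde E}\evwedge\Lambda\ins{\ol w_j}(\tilde u), \ins{\ol w_j}(\tilde u)\rrangle = (q-1)\llangle \Cplxi R^{\tilde E}\evwedge\Lambda\tilde u,\tilde u\rrangle,
\end{equation*}
and the corresponding $\sum_j \int \mathscr L(u_j,u_j)\,d\mu_{\partial M} = (q-1)\int\mathscr L(u,u)\,d\mu_{\partial M}$.

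Combining everything gives
\begin{equation*}
 \sum_j Q^E_U(u_j,u_j) \leq 2q\,\|d_{1,0}^{\tilde E,\fadj}\tilde u\|^2 + (q-1)\llangle \Cplxi R^{\tilde E}\evwedge\Lambda\tilde u,\tilde u\rrangle + (q-1)\!\int_{\partial M\cap U}\!\!\mathscr L(u,u)\,d\mu_{\partial M} + 2n\kappa\|u\|^2.
\end{equation*}
Using $(\ast)$ to substitute $\|d_{1,0}^{\tilde E,\fadj}\tilde u\|^2 = Q^E_U(u,u) - \llangle \Cplxi R^{\tilde E}\evwedge\Lambda\tilde u,\tilde u\rrangle - \int\mathscr L(u,u)$, the coefficients collapse to $2q$ in front of $Q^E_U(u,u)$ and $-(q+1)$ in front of both the curvature and the Levi contribution. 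Because $\tilde E$ is $q$-Nakano lower semibounded with constant $c$, the curvature term contributes at least $c\|u\|^2$, and $q$-Levi pseudoconvexity makes the boundary integral nonnegative; these terms now have a \emph{negative} coefficient, which is precisely what one needs in an upper bound, and dividing by $q$ yields~\cref{eq:spectral_percolation_localized}. The main bookkeeping obstacle is ensuring that the two appearances of \cref{eq:exterior_algebra_identity_2} (for curvature and for the Levi form) produce the same factor $(q-1)$ needed to make the signs work out, and that $\Psi^E_p$ correctly transports everything between $(p,q)$- and $(n,q)$-forms.
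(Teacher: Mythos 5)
Your proof is correct and is essentially the paper's argument with a small algebraic rearrangement. Both apply the global Bochner--Kodaira--Nakano formula (\cref{bkn_boundary_localized}) to the $\ins{\ol w_j}(u)$, both use \cref{percolation_derivative_estimate} for the $d^{\fadj}_{1,0}$-term, and both use the combinatorial identity \cref{eq:exterior_algebra_identity_2} to extract the factor $q-1$ from the curvature and Levi sums. The only difference is the final step: the paper bounds $\tfrac{q-1}{q}$ by $2$ on the curvature and Levi terms (absorbing the slack via the curvature lower bound) and then reassembles $2Q^E_U(u,u)$ by a second appeal to \cref{bkn_boundary_localized}, whereas you solve BKN for $\|d^{\fadj}_{1,0}\widetilde u\|^2$ and substitute, arriving at the coefficient $-(q+1)$ directly. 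Since $2-\tfrac{q-1}{q}=\tfrac{q+1}{q}$, the two bookkeepings coincide and yield the same constant.
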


\begin{proof}
 The orthonormal frame $(w_j)_{j=1}^n$ from our assumption induces an isometry $L^2_{p,q}(U,E) \to L^2_{p,q-1}(U,E)^{\oplus n}$, given by $u \mapsto \frac{1}{\sqrt{q}}(\ins{\ol w_j}(u))_{j=1}^n$, see \cref{eq:exterior_algebra_identity_1}.
 By the global Bochner--Kodaira--Nakano formula~\cref{eq:bkn_boundary_localized} we have, for every $u \in B_{\partial M}^{p,q}(U,E)$,
 \begin{multline*}   \frac{1}{q}\sum_{j=1}^n Q^E_U(\ins{\ol w_j}(u),\ins{\ol w_j}(u))
  = \frac{1}{q}\sum_{j=1}^n \bigg(
  \big\|d^{E\otimes \Lambda^{n-p,0}TM,\fadj}_{1,0}(\ins{\ol w_j}(\widetilde u))\big\|^2 + \\
  + \big\llangle \Cplxi R^{E\otimes \Lambda^{n-p,0}TM} \evwedge \Lambda \big(\ins{\ol w_j}(\widetilde u)\big),\ins{\ol w_j}(\widetilde u) \big\rrangle
  + \int_{\partial M \cap U} \mathscr L(\ins{\ol w_j}(u),\ins{\ol w_j}(u))\,d\mu_{\partial M}
  \bigg),
 \end{multline*}
 where $\widetilde u \coloneqq \Psi^E_p(u)$, which by using \cref{eq:exterior_algebra_identity_2} as well as \cref{percolation_derivative_estimate} (recall from~\cref{eq:curvature_wedge_lambda} the local formula for $\Cplxi R^{E\otimes \Lambda^{n-p,0}TM} \evwedge \Lambda$)
 can be estimated from above by
 \begin{multline}
  \label{eq:spectral_percolation_localized_proof_1}
  2\big\|d^{E\otimes \Lambda^{n-p,0}TM,\fadj}_{1,0} \widetilde u\big\|^2
  + \frac{q-1}{q}\big\llangle \Cplxi R^{E\otimes \Lambda^{n-p,0}TM} \evwedge \Lambda \widetilde u,\widetilde u \big\rrangle + \\
  + \frac{q-1}{q}\int_{\partial M \cap U} \mathscr L(u,u)\,d\mu_{\partial M}
  + \frac{2n}{q}\Big(\max\big\{\|\nabla \ol w_j\|_{L^\infty}^2 : 1 \leq j \leq n\big\}\Big) \|u\|^2.
 \end{multline}
 By our pseudoconvexity assumption,
 $
 \frac{q-1}{q} \int_{\partial M \cap U} \mathscr L(u,u)\,d\mu_{\partial M} \leq 2 \int_{\partial M \cap U} \mathscr L(u,u)\,d\mu_{\partial M}
 $,
 and the curvature bound yields
 \begin{equation}
  \label{eq:spectral_percolation_localized_proof_3}
  \frac{q-1}{q}\big\llangle \Cplxi R^{E\otimes \Lambda^{n-p,0}TM} \evwedge \Lambda \widetilde u,\widetilde u \big\rrangle
  \leq 2 \big\llangle \Cplxi R^{E\otimes \Lambda^{n-p,0}TM} \evwedge \Lambda \widetilde u,\widetilde u \big\rrangle - c\bigg(2-\frac{q-1}{q}\bigg)\|u\|^2.
 \end{equation}
 Using this and again applying \cref{eq:bkn_boundary_localized}, we see that \cref{eq:spectral_percolation_localized_proof_1} is dominated by
 \[
  2Q^E_U(u,u) + \bigg(\frac{2n\kappa}{q} - c\bigg(2-\frac{q-1}{q}\bigg)\bigg)\|u\|^2 = 2Q^E_U(u,u) + \frac{2n\kappa -c (q+1)}{q}\,\|u\|^2,
 \]
 as claimed.
\end{proof}

\begin{proof}[Proof of \cref{spectral_percolation_specific}]
 From \cref{bounded_geometry_partitions_frames_kaehler}, we know that there are geodesic balls $\{B(x_k,r) : k \in \N\}$ that cover $X$ along with a subordinate partition of unity $(\varphi_k^2)_{k \in \N}$ satisfying the estimate $\gamma \coloneqq \sup_{k \in \N}\|d\varphi_k\|_{L^\infty}^2 < \infty$ and with orthonormal frames $(w_j^k)_{j=1}^n$ of $T^{1,0}X|_{B(x_k,r)}$ such that we have the uniform bound $|\nabla \ol w_j^k|^2 \leq \kappa < \infty$ for all $j$ and $k$.
 Fix $\lambda < \inf\essspec{\square^E_{p,q-1}}$ and take a compact subset $K \subseteq \interior M$ with $\inf\spec{\square^E_{M\setminus K,p,q-1}} \geq \lambda$, see \cref{limit_of_infimum_of_spectrum}.
 Let $u \in B_{\partial M}^{p,q}(M\setminus K,E)$.
 Applying the localization formula~\cref{eq:ims_localization_first_order} to $D \coloneqq \sqrt 2(\dbar^E + \dbar^{E,\fadj})|_{\Omega^{p,\bullet}(M,E)}$, with $\symb{D}(\gamma) = c_p(\gamma)$ the Clifford action from \cref{eq:clifford_kaehler}, we have
 \begin{align*}
   2 Q^E(u,u)
   & = \sum_{k = 1}^\infty \big(2 Q^E(\varphi_ku,\varphi_ku) - \|c_p(d\varphi_k) u\|^2 \big) \\
   & \geq \sum_{k = 1}^\infty \bigg(\frac{1}{q}\sum_{j=1}^n Q^E(\ins{\ol w_j^k}(\varphi_k u),\ins{\ol w_j^k}(\varphi_k u)) - C\|\varphi_k u\|^2 - \|c_p(d\varphi_k) u\|^2\bigg) \\
   & \geq \sum_{k = 1}^\infty\bigg(\frac{\lambda}{q} \sum_{j=1}^n \|\ins{\ol w_j^k}(\varphi_k u)\|^2 - C\|\varphi_k u\|^2\bigg) - \gamma N \|u\|^2 \\
   & = (\lambda - C - \gamma N) \|u\|^2,
 \end{align*}
 where $C$ is the constant from \cref{spectral_percolation_localized} (with $\kappa$ modified to take the supremum over $k$ also), and $N$ is the intersection multiplicity of the cover $\{B(x_k,r)\}_{k \in \N}$, see \cref{bounded_geometry_partitions_frames}.
 By \cref{form_core_for_square,limit_of_infimum_of_spectrum}, this implies $2\inf\essspec{\square^E_{p,q}} \geq \lambda - C - \gamma N$, from which the claim follows.
\end{proof}

\begin{rem}
 \begin{inlineenum}
 \item\label{rem:spectral_percolation_curvature_assumption}
  If $M \subseteq X$ in \cref{spectral_percolation_specific} is bounded (hence compact, since it is complete by assumption), then the curvature condition on $E|_M$ in \cref{spectral_percolation_specific} is of course vacuous, and $X$ also does not have to be of bounded geometry anymore.

 \item
  In the constants that appear in \cref{spectral_percolation_specific} and are explicit in its proof, it is apparent that one may replace $\nakanoc[q]{E\otimes \Lambda^{n-p,0}TM}$ with $\lim_K \nakanoc[q]{E|_{M\setminus K} \otimes \Lambda^{n-p,0}(T(M\setminus K))}$, with $K$ ranging over the compact subsets of $\interior M$.
  
 \item
  If $M$ is a (Levi) pseudoconvex domain in $\Cplx^n$ with smooth boundary, and $E \to M$ is a Nakano semipositive vector bundle, then retracing the proof of \cref{spectral_percolation_localized}, we find
  \[ \frac{1}{q} \sum_{j=1}^n Q^E(\ins{\ol w_j}(u),\ins{\ol w_j}(u)) \leq Q^E(u,u) \]
  for all $u \in B_{\partial M}^{p,q}(M,E)$, with $(w_j)_{j=1}^n$ some constant global orthonormal frame of $T^{1,0}M \cong M \times \Cplx^n$, since all the terms involving estimates of the derivatives of $w_j$ do not appear.
  Consequently, the condition $\inf\essspec{\square^E} > 0$ also percolates up the $\dbar^E$-complex in this case.
  This is included in the orginal result of Fu from \cite[Proposition~2.2]{Fu2008}.
 \end{inlineenum}
\end{rem}

 \section{Proofs of \cref{necessary_condition_for_schrodinger_operator_discrete_spectrum,necessary_condition_square_discrete_spectrum}}
\label{sec:schrodinger_operators}

Suppose that $\nabla$ is a metric connection on $E$ and $V \colon E \to E$ a self-adjoint bundle endomorphism and let $H_{\nabla,V}$ be the generalized Schr\"odinger operator from~\cref{eq:schrodinger_type_operator}.
We will always make the assumption that $H_{\nabla,V}$ is lower semibounded.
In case $M$ is complete and without boundary, this implies that $H_{\nabla, V}$ is essentially self-adjoint, see \cite[Theorem~2.13]{Braverman2002}.
For $U \subseteq M$ an open subset, define
\newcommand{\energy}[2]{{\mathcal E_{#1}(#2)}}
\begin{equation}
 \label{eq:schrodinger_energy}
 \energy{\nabla,V}{U} \coloneqq \inf\bigg\{\frac{\llangle H_{\nabla,V}s,s \rrangle}{\|s\|^2} : s\in \bseccc(M,E)\setminus\{0\}\text{ with } \supp(s) \subseteq U\bigg\}.
\end{equation}

\begin{rem}
 \label{rem:energy_and_essential_spectrum}
 \begin{inlineenum}
  \item
   Suppose that $A \colon \dom{A} \subseteq L^2(M,E) \to L^2(M,E)$ is a lower semibounded self-adjoint extension of $H_{\nabla,V}$.
   Then $\energy{\nabla,V}{U} \geq \inf\spec{A_U}$ for every open subset $U$ of $M$, where $A_U$ is defined in \cref{def:localized_differential_operator}.
   From \cref{limit_of_infimum_of_spectrum}, we obtain
   \begin{equation}
    \label{eq:lim_energy_essspec}
    \lim_K \energy{\nabla,V}{M\setminus K} \geq \inf\essspec{A},
   \end{equation}
   with $K$ ranging over the compact subsets of $\interior M$.

  \item
   Let $(U_n)_{n \in \N}$ be a sequence of open subsets of $M$ with $U_n \to \infty$ as $n \to \infty$, meaning that for all compact $K \subseteq \interior M$ there is $n_0 \in \N$ such that $U_n \subseteq M\setminus K$ for all $n \geq n_0$.
   Then
   \begin{equation}
    \label{eq:liminf_energy_open_subsets_to_infty}
    \liminf_{n \to \infty} \energy{\nabla,V}{U_n} \geq \inf\essspec{A}.
   \end{equation}
   Indeed, let $\lambda$ be an accumulation point of $n \mapsto \energy{\nabla,V}{U_n}$, with $\lim_{k \to \infty} \energy{\nabla,V}{U_{n_k}} = \lambda$ for some subsequence $k \mapsto U_{n_k}$.
   Without loss of generality, we can assume that $U_{n_k} \subseteq M\setminus K_k$, where $(K_k)_{k \in \N}$ is an exhaustion of $\interior M$ by compact subsets.
   It follows from \cref{eq:lim_energy_essspec} that
   \[ \lambda = \lim_{k \to \infty} \energy{\nabla,V}{U_{n_k}} \geq \lim_{k \to \infty} \energy{\nabla,V}{M\setminus K_k} = \lim_K \energy{\nabla,V}{M\setminus K} \geq \inf\essspec{A}. \qedhere \]
 \end{inlineenum}
\end{rem}

The following result and its proof are motivated by \cite[Main Theorem]{Iwatsuka1986} (see also \cite[Theorem~6.10]{Shubin1999}):

\begin{lem}
 \label{schrodinger_operator_bounded_geometry_ball_energy}
 Let $M$ be a Riemannian manifold of $1$-bounded geometry (without boundary), $E \to M$ a Hermitian vector bundle, $\nabla$ a connection on $E$, and $V$ a self-adjoint bundle endomorphism of $E$.
 Assume that $H_{\nabla,V}$ is lower semibounded, hence essentially self-adjoint on $\bsecc(M,E)$.
 Then the following are equivalent:
 \begin{enumerate}
 \item\label{item:schrodinger_operator_bounded_geometry_ball_energy_discrete_spectrum}
  The closure of $H_{\nabla,V}$ has discrete spectrum.
 \item\label{item:schrodinger_operator_bounded_geometry_ball_energy_ball_energy}
  $\lim_{k \to \infty} \energy{\nabla,V}{B(x_k,r)} = \infty$ for all sequences $x_k \in M$ with $x_k \to \infty$ as $k \to \infty$ and all $r>0$ small enough.
 \end{enumerate}
\end{lem}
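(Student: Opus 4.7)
For \cref{item:schrodinger_operator_bounded_geometry_ball_energy_discrete_spectrum}$\Rightarrow$\cref{item:schrodinger_operator_bounded_geometry_ball_energy_ball_energy}, let $A$ denote the (unique) self-adjoint closure of $H_{\nabla,V}$. First I would establish the identity $\energy{\nabla,V}{U} = \inf\spec{A_U}$ for every open $U \subseteq M$: essential self-adjointness gives $\bsecc(M,E)$ as a form core for $A$, and a standard cutoff-and-approximation argument then shows that smooth compactly supported sections with support in $U$ form a form core for $A_U$. Discreteness of spectrum means $\inf\essspec{A} = +\infty$, so Persson's theorem~\cref{limit_of_infimum_of_spectrum} yields $\lim_K \energy{\nabla,V}{M\setminus K} = +\infty$. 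Given $x_k \to \infty$ and $r > 0$, the ball $B(x_k,r)$ lies in $M\setminus K$ for all large $k$ and each fixed compact $K$, and monotonicity (\cref{extension_by_zero_belongs_to_domain}) gives $\energy{\nabla,V}{B(x_k,r)} \geq \energy{\nabla,V}{M\setminus K}$, whence the liminf is $+\infty$.

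For the converse, the first step is to rephrase \cref{item:schrodinger_operator_bounded_geometry_ball_energy_ball_energy} in a uniform-in-$x$ manner: there exists $r_0 > 0$ such that, for every $\Lambda > 0$, there is a compact $K \subseteq M$ with $\energy{\nabla,V}{B(x,r_0)} \geq \Lambda$ for all $x \in M\setminus K$. Indeed, if this failed one could extract a sequence $x_k \to \infty$ with $\energy{\nabla,V}{B(x_k,r_0)}$ bounded, contradicting the hypothesis. Fixing $r_0$ small enough that the bounded-geometry tools of \cref{bounded_geometry_partitions_frames} apply, one obtains a cover $\{B(x_k,r_0)\}_{k \in \N}$ of $M$ with finite intersection multiplicity $N$ and a subordinate smooth partition of unity $(\varphi_k^2)_{k \in \N}$ satisfying $\sup_k\|d\varphi_k\|_{L^\infty} \leq C_0 < \infty$.

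Next I would apply the IMS localization formula~\cref{eq:ims_localization} to $H_{\nabla,V}$, which is of Laplace type with symbol $-|\gamma|^2\,\id{E}$: for $s \in \bsecc(M,E)$,
\[
 \llangle H_{\nabla,V}s,s\rrangle = \sum_{k=1}^\infty \llangle H_{\nabla,V}(\varphi_k s),\varphi_k s\rrangle - \sum_{k=1}^\infty \int_M |d\varphi_k|^2|s|^2\,d\mu_g,
\]
with error term bounded by $NC_0^2\|s\|^2$. Given $\Lambda > 0$, choose $K$ as above and set $K' \coloneqq \{x \in M : \dist[d_g]{x}{K} \leq r_0\}$, which is compact by completeness. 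If $\supp(s) \subseteq M\setminus K'$, then each $k$ with $\varphi_k s \not\equiv 0$ satisfies $B(x_k,r_0) \cap \supp(s) \neq \emptyset$, hence $x_k \notin K$, so $\llangle H_{\nabla,V}(\varphi_k s),\varphi_k s\rrangle \geq \Lambda\|\varphi_k s\|^2$. Summing and using $\sum_k\|\varphi_k s\|^2 = \|s\|^2$ gives $\llangle H_{\nabla,V}s,s\rrangle \geq (\Lambda - NC_0^2)\|s\|^2$, so $\energy{\nabla,V}{M\setminus K'} \geq \Lambda - NC_0^2$. Letting $\Lambda \to \infty$ and invoking Persson's theorem once more gives $\inf\essspec{A} = +\infty$, i.e., discrete spectrum.

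The main technical hurdle will be the identification $\energy{\nabla,V}{U} = \inf\spec{A_U}$ used in the easy direction: it requires smooth compactly supported sections with support in $U$ to be dense, in the $Q_A$-graph norm, inside $\{s \in \dom{Q_A} : \supp(s) \subseteq U\}$, which boils down to combining cutoffs with the essential self-adjointness of $H_{\nabla,V}$. The remainder is a by-now standard synthesis of the IMS formula with the partition-of-unity machinery of bounded geometry.
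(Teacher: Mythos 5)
Your proposal is correct in outline and uses the same core tools as the paper---the IMS localization formula~\cref{eq:ims_localization}, the bounded-geometry cover and subordinate partition of unity from \cref{bounded_geometry_partitions_frames}, and Persson's theorem~\cref{limit_of_infimum_of_spectrum}---but the organization of the two directions differs from the paper's, and you mislocate the one genuinely nontrivial technical point of your route.

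For (i)$\Rightarrow$(ii), you propose to establish the full identity $\energy{\nabla,V}{U} = \inf\spec{A_U}$, but only the inequality $\energy{\nabla,V}{U}\geq \inf\spec{A_U}$ is needed, and that is immediate from the definition~\cref{eq:schrodinger_energy}. The paper simply cites \cref{eq:liminf_energy_open_subsets_to_infty} from \cref{rem:energy_and_essential_spectrum}, which packages exactly this easy inequality plus Persson; nothing in this direction requires smooth compactly supported sections to be a form core for the localized operator.

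For (ii)$\Rightarrow$(i), the paper's route is to apply the IMS formula to the fixed bounded-geometry partition $(\varphi_k^2)$ and read off a single smooth function $\psi = \sum_k \energy{\nabla,V}{B(x_k,r)}\varphi_k^2 - \sum_k|d\varphi_k|^2$ that is bounded below and satisfies $\llangle H_{\nabla,V}s,s\rrangle\geq\int_M\psi|s|^2\,d\mu_g$ for all $s\in\bsecc(M,E)$; properness of $\psi$ follows because the cover centers $x_k$ themselves tend to infinity (a direct consequence of the finite intersection multiplicity), so your uniform-in-$x$ reformulation via a diagonal subsequence is unnecessary. Discreteness then comes from \cref{characterization_of_discreteness_of_spectrum}, whose hypotheses need only that $\bsecc(M,E)$ is a \emph{global} form core for $\ol{H_{\nabla,V}}$---immediate from essential self-adjointness. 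Your variant instead ends with a lower bound on $\energy{\nabla,V}{M\setminus K'}$ and then ``invokes Persson,'' but Persson speaks about $\inf\spec{A_{M\setminus K'}}$, so what you actually need here is the \emph{reverse} inequality $\inf\spec{A_{M\setminus K'}}\geq\energy{\nabla,V}{M\setminus K'}$, i.e.\ that compactly supported smooth sections supported in $M\setminus K'$ are a form core for the localized operator $A_{M\setminus K'}$. That is precisely the ``technical hurdle'' you identify---only you attribute it to the easy direction, where it plays no role, rather than to (ii)$\Rightarrow$(i), where it is the substantive step of your argument. The paper's detour through the proper coercivity function $\psi$ and \cref{characterization_of_discreteness_of_spectrum} is designed to sidestep exactly this localized form-core statement; if you want to keep your more direct Persson-based formulation, that density argument (a cutoff-and-approximation step using the Leibniz/localization machinery of \cref{sec:essential_self-adjointness}) must be carried out, as it does not follow from essential self-adjointness alone.
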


\begin{proof}
 If the spectrum of $\ol{H_{\nabla,V}}$ is discrete, then condition~\cref{item:schrodinger_operator_bounded_geometry_ball_energy_ball_energy} holds because of \cref{eq:liminf_energy_open_subsets_to_infty}.
 Conversely, suppose that \cref{item:schrodinger_operator_bounded_geometry_ball_energy_ball_energy} is true.
 We show that there is a proper smooth function $\psi \colon M \to [C,\infty)$, where $C \in \R$ will be determined later, such that $\llangle H_{\nabla, V} s,s \rrangle \geq \int_M \psi |s|^2\,d\mu_g$ for all $s \in \bsecc(M,E)$, from which the claim follows by using \cref{characterization_of_discreteness_of_spectrum} and essential self-adjointness of $H_{\nabla,V}$.

 If $M$ is compact, there is nothing to show due to \cref{eq:lim_energy_essspec}, so we may assume that $M$ is noncompact.
 Let $\{B(x_k,r)\}_{k \geq 1}$ be a countable cover of $M$ by geodesic balls as in \cref{bounded_geometry_partitions_frames}, with associated functions $\varphi_k \in C^\infty(M,[0,1])$.
 Then $x_k \to \infty$ as $k \to \infty$, for if a subsequence would stay in a compact subset of $M$, it would have a limit point in $M$, contradicting the fact that this cover has uniformly finite intersection multiplicity, see \cref{bounded_geometry_partitions_frames}.
 By definition, $\llangle H_{\nabla,V}(\varphi_k s),\varphi_k s\rrangle \geq \energy{\nabla,V}{B(x_k,r)}\|\varphi_k s\|^2$, hence
 \[
  \llangle H_{\nabla,V}s,s \rrangle \geq \int_M \sum_{k=1}^\infty \Big(\energy{\nabla,V}{B(x_k,r)}\varphi_k^2 - |d\varphi_k|^2\Big)|s|^2\,d\mu_g
 \]
 follows from the IMS localization formula~\cref{eq:ims_localization}.

 Let $\psi \colon M \to \R$ denote the function defined by the series.
 Then $\psi$ is smooth and maps $M$ to $[C,\infty)$, where $C \coloneqq \inf\spec{H_{\nabla, V}} - N \gamma$, with $N$ the intersection multiplicity of the cover $\{B(x_k,r)\}_{k \geq 1}$, and $\gamma \coloneqq \sup_{k \in \N}\|d\varphi_k\|_{L^\infty}$.
 Moreover, $\psi \colon M \to [C,\infty)$ is proper:
 if $\lambda \in \R$, then we find $k_0 \in \N$ such that $\energy{\nabla,V}{B(x_k,r)} \geq \lambda$ for all $k \geq k_0$, \ie $\psi \geq \lambda - N\gamma$ on $\bigcup_{k \geq k_0} B(x_k,r)$, a set whose complement is bounded, hence with compact closure by the Hopf--Rinow theorem.
 This completes the proof.
\end{proof}

In what follows, we are mostly concerned with Schr\"odinger operators acting on sections of Hermitian \emph{line} bundles, and where the connection is a metric connection.
Because $\en{L}$ is trivial, we can identify $V$ with a smooth function on $M$, and $\Omega^1(M,\en{L})$ with $\Omega^1(M,\mathbb C)$.
We will use the fact that the set of metric connections on a given line bundle $L \to M$ may be described as the affine space $\{\nabla^0 + \Cplxi\alpha\otimes \id{L} : \alpha \in \Omega^1(M,\mathbb R)\}$ for any reference metric connection $\nabla^0$ on $L$.

\begin{lem}[Gauge invariance]
 \label{gauge_invariance_of_energy}
 Let $U \subseteq M$ be a simply connected open subset.
 Then $\energy{\nabla,V}{U} = \energy{\nabla',V}{U}$ for any two metric connections $\nabla$ and $\nabla'$ on $L|_U$ with the same curvature.
\end{lem}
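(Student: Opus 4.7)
The plan is to construct a unitary gauge transformation that intertwines the two Schr\"odinger operators, so that the Rayleigh quotient defining the energy is preserved verbatim.

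First, I would use that any two metric connections on the Hermitian line bundle $L|_U$ differ by a purely imaginary scalar-valued $1$-form: since $\en{L|_U}$ is canonically trivial, I may write $\nabla' = \nabla + \Cplxi\alpha \otimes \id{L}$ for some $\alpha \in \Omega^1(U,\R)$. A short computation of curvatures then yields $R^{\nabla'} - R^\nabla = \Cplxi\,d\alpha$, so the hypothesis of equal curvatures forces $d\alpha = 0$. At this point simple connectedness of $U$ enters (and enters only) through the Poincar\'e lemma, producing $f \in C^\infty(U,\R)$ with $df = \alpha$.

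Next, I would introduce the pointwise unitary gauge transformation $U_f \colon s \mapsto e^{\Cplxi f} s$, which preserves both the $L^2$-norm and the space of smooth sections of $L$ compactly supported in $U$. The Leibniz rule immediately gives the intertwining relation $\nabla \circ U_f = U_f \circ \nabla'$; taking formal adjoints (and using the pointwise unitarity of $U_f$) yields $\nabla^\fadj \nabla \circ U_f = U_f \circ (\nabla')^\fadj \nabla'$. Because $V$ acts fiberwise as multiplication by a real-valued function, it commutes with $U_f$. Combining these observations gives $H_{\nabla,V} \circ U_f = U_f \circ H_{\nabla',V}$ on the relevant smooth test sections, and substituting $s \mapsto U_f s$ in~\cref{eq:schrodinger_energy} — together with the fact that $U_f$ is a norm-preserving bijection of the admissible test-section space with itself — will deliver $\energy{\nabla,V}{U} = \energy{\nabla',V}{U}$.

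I do not anticipate a substantial obstacle; the only mildly delicate point is that $f$ is defined only on $U$, but this is harmless since admissible test sections vanish outside $U$, so $e^{\Cplxi f} s$ extends smoothly by zero to all of $M$.
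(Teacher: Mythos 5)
Your proposal is correct and takes essentially the same route as the paper: decompose $\nabla' - \nabla$ as a closed purely imaginary $1$-form, use simple connectedness to write it as $\Cplxi\,df$, and then gauge-transform by $e^{\pm\Cplxi f}$ to intertwine the two connections. The only cosmetic difference is that you package this as an intertwining of the full operators $H_{\nabla,V}$ and $H_{\nabla',V}$ via the unitary $U_f$, while the paper intertwines only $\nabla$ and $\nabla'$ and then compares the quadratic forms $\llangle H_{\nabla,V}\cdot,\cdot\rrangle$ directly; both yield the identity of Rayleigh quotients immediately.
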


\begin{proof}
 This is just a geometric reinterpretation of the corresponding property of scalar Schr\"od\-ing\-er operators on $\R^n$, see for instance \cite[Theorem~1.2]{Leinfelder1983}.
 The difference of the two metric connections is a purely imaginary one-form, \ie $\nabla - \nabla' = \Cplxi\alpha \otimes \id{L}$ with $\alpha \in \Omega^1(U,\mathbb R)$.
 Since the curvatures agree, we have $d\alpha = 0$.
 Indeed, $d^\nabla = d^{\nabla'} + \Cplxi \extp{\alpha}$, and hence
 \[ R^\nabla \evwedge s = d^\nabla (\nabla s) = d^{\nabla'}(\nabla' s + \Cplxi \alpha \otimes s) + \Cplxi \alpha \wedge(\nabla ' s + \Cplxi \alpha \otimes s) = R^{\nabla'}\evwedge s + \Cplxi d\alpha \otimes s \]
 for all $s \in \bsec(U,L)$.
 Because $U$ is simply connected, de~Rham's theorem implies that there is $g \in C^\infty(U,\mathbb R)$ such that $\alpha = dg$.
 For $s \in \bsecc(M,E)$ with support in $U$, we compute
 \[ \nabla(e^{-\Cplxi g}s) = -\Cplxi e^{-\Cplxi g} dg \otimes s + e^{-\Cplxi g}(\nabla' s + \Cplxi dg \otimes s) = e^{-\Cplxi g}\,\nabla' s, \]
 hence
 $\llangle H_{\nabla,V} (e^{-\Cplxi g}s),e^{-\Cplxi g}s \rrangle = \llangle H_{\nabla',V}s,s \rrangle$
 and therefore $\energy{\nabla,V}{U} = \energy{\nabla',V}{U}$.
\end{proof}

The following \namecref{poincare_lemma_with_estimates} extends \cite[Proposition~3.2]{Iwatsuka1986} to Riemannian manifolds of $0$-bounded geometry:

\begin{lem}
 \label{poincare_lemma_with_estimates}
 Let $M$ be a Riemannian manifold of $0$-bounded geometry.
 There exists $\rho > 0$ with the following property:
 if $r \in (0,\rho)$, $x \in M$, and $B \in \Omega^2(\ol{B(x,r)})$ is a closed two-form, then there is $a \in \Omega^1(B(x,r))$ such that $da = B$ and
 \[ \|a\|_{L^p(B(x,r),T^*M)} \leq C_p(r)\|B\|_{L^p(B(x,r),\Lambda^2 T^*M)} \]
 for all $1 \leq p \leq \infty$, where $C_p(r) > 0$ depends only on $p$, $r$, and on the geometry of $M$, but \emph{not} on $x$.
\end{lem}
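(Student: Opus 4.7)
The plan is to transfer the problem from $M$ to a Euclidean ball via exponential coordinates, solve the corresponding Euclidean Poincar\'e problem with an explicit homotopy operator, and transfer the result back, converting norms via the uniform bounds provided by $0$-bounded geometry. By assumption of $0$-bounded geometry, $M$ has a positive injectivity radius $\rinj{M} > 0$ and, in normal coordinates around any $x \in M$, the metric components $g_{ij}$ are uniformly (in $x$) comparable to the Euclidean metric. Choosing any $\rho < \rinj{M}$, we ensure that for all $r \in (0,\rho)$ and all $x \in M$ the exponential map $\exp_x \colon B_{T_xM}(0,r) \to B(x,r)$ is a diffeomorphism. Fixing a linear isometry $T_xM \cong \R^n$, the pullback $\widetilde B \coloneqq \exp_x^* B$ is a closed two-form on the Euclidean ball $B_{\R^n}(0,r)$.

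On this ball, I would employ the classical Poincar\'e homotopy operator
\begin{equation*}
 (K\widetilde B)(y) = \int_0^1 t\,(\ins{y}\widetilde B)(ty)\,dt,
\end{equation*}
which, by the Cartan formula $dK + Kd = \id$ on positive-degree forms on a star-shaped domain, produces a smooth one-form $\widetilde a \coloneqq K\widetilde B$ with $d\widetilde a = \widetilde B$ on $B_{\R^n}(0,r)$. The desired primitive on $M$ is then the unique $a \in \Omega^1(B(x,r))$ with $\exp_x^* a = \widetilde a$; naturality of $d$ and the injectivity of $\exp_x^*$ on forms give $da = B$. The $L^p$-bound for $\widetilde a$ in terms of $\widetilde B$ is obtained by Minkowski's integral inequality together with the change of variables $z = ty$: for $p = \infty$ one gets $\|\widetilde a\|_{L^\infty} \leq \tfrac{r}{2}\|\widetilde B\|_{L^\infty}$ immediately, and for finite $p$ an analogous computation yields $\|\widetilde a\|_{L^p(B_{\R^n}(0,r))} \leq \widetilde C_p(r)\,\|\widetilde B\|_{L^p(B_{\R^n}(0,r))}$ with $\widetilde C_p(r)$ depending only on $n$, $p$, and $r$ (for small $p$, where the naive estimate on $K$ diverges, one may invoke a smoothed/averaged variant of $K$ in the spirit of Iwaniec--Lutoborski, which preserves closedness-to-primitive and admits $L^p$-bounds uniformly in $p \in [1,\infty]$).

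The final step is to compare the intrinsic $L^p$-norms on $B(x,r) \subseteq M$ with the Euclidean $L^p$-norms of the pulled-back forms on $B_{\R^n}(0,r)$; both the volume form and the fiberwise norm on $\Lambda^\bullet T^*M$ differ from their Euclidean counterparts by factors expressible in terms of $(g_{ij})$ and $(g^{ij})$ in normal coordinates. By $0$-bounded geometry these factors are bounded above and below by constants depending only on the geometry of $M$ (not on $x$), so the composition with the Euclidean estimate yields the required constant $C_p(r)$. The main technical hurdle is to verify that ``$0$-bounded geometry'' truly supplies the needed two-sided bounds on $g_{ij}$ in normal coordinates of radius $\rho$ uniformly in the base point $x$; this should be immediate from the definition recalled in \cref{sec:bounded_geometry}. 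A secondary delicate point, mentioned above, is arranging the Euclidean $L^p$-bound on the homotopy operator uniformly for all $p \in [1,\infty]$, since the naive integral representation of $K$ becomes singular for $p$ near the low end of the range and a regularized primitive must be used instead.
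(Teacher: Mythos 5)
Your proposal follows essentially the same route as the paper: pull $B$ back through normal coordinates to a Euclidean ball, solve the Poincar\'e problem there with $L^p$ control, push the primitive forward, and convert norms using the uniform two-sided bounds on the metric coefficients that $0$-bounded geometry guarantees (via the uniformly bounded distortion of $\exp_x$; the paper records this as \cref{bounded_geometry_bounded_distortion} and \cref{rem:bounded_geometry_bounded_coefficients}). The one point where you hedge is actually the crux: the classical star-shaped homotopy operator $K$ does \emph{not} yield the claimed $L^p$ bound for all $1 \le p \le \infty$ --- a Minkowski/change-of-variables computation gives a factor $\int_0^1 t^{1-n/p}\,dt$, which diverges for $p \le n/2$, and since the lemma is stated for the full range (and is later invoked at $p=2$ on manifolds of arbitrary dimension), this is not an edge case to be patched. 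The averaged operator you mention as a fallback,
\[ \widetilde a_y(v) = \fint_{B_{\R^n}(0,r)} \int_0^1 \widetilde B_{z+t(y-z)}\big(t(y-z),v\big)\,dt\,d\lambda(z), \]
is exactly what the paper uses (citing Iwatsuka's Proposition~3.2, which is the same device as Iwaniec--Lutoborski), and should be the primary construction rather than a remedial one. With that substitution made up front, your argument matches the paper's proof.
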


\begin{proof}
 Let $\rho > 0$ be such that the distortion of normal coordinates on balls of radius at most $\rho$ is uniformly bounded on $M$, see \cref{bounded_geometry_bounded_distortion}.
 Take $B \in \Omega^2(B(x,r))$ as in the assumption and put $\widetilde B \coloneqq \varphi_x^*B$, where $\varphi_x \coloneqq (\exp_x \circ \tau)|_{B_{\R^n}(0,r)}$ are Riemannian normal coordinates, with $\tau \colon \R^n \to T_xM$ any orthonormal map (\ie a choice of orthonormal basis of $T_xM$), chosen in a way that $\varphi_x$ preserves the orientations.
 Then $\widetilde B$ is an element of $\Omega^2(\ol{B_{\R^n}(0,r)})$, closed by naturality  of the exterior derivative ($d$ commutes with pullbacks), and the construction in \cite[Proposition~3.2]{Iwatsuka1986} yields $\widetilde a \in \Omega^1(B_{\R^n}(0,r))$ such that $da = B$ and
 \[ \|\widetilde a\|_{L^p(B_{\R^n}(0,r),T^*\R^n)} \leq \widetilde C_p(r) \|\widetilde B\|_{L^p(B_{\R^n}(0,r),\Lambda^2 \R^n)} \]
 for all $1 \leq p \leq \infty$.
 This can be achieved by taking
 \[ \widetilde a_{y}(v) \coloneqq \fint_{B_{\R^n}(0,r)} \int_0^1 \widetilde B_{z+t(y-z)}(t(y-z),v)\,dt\,d\lambda(z), \]
 with $y \in B_{\R^n}(0,r)$ and $v \in T_y(B_{\R^n}(0,r)) \cong \R^n$, and where $\fint \arghere\,d\lambda$ denotes the average with respect to Lebesgue measure.
 Define $a \coloneqq (\varphi_x^{-1})^* \widetilde a$.
 Then $da = B$, again by naturality, and we have
 \[
 |\varphi_x^*a|(y) \geq |T_y \varphi_x|^{-1}\, (|a| \circ \varphi_x)(y)
 \quad\text{and}\quad
 |\varphi_x^*B|(y) \leq |T_y \varphi_x|^2\, (|B| \circ \varphi_x)(y),
 \]
 where $|T_y\varphi_x|$ is the operator norm.
 By \cref{bounded_geometry_bounded_distortion,rem:bounded_geometry_bounded_coefficients}, there is $C > 0$ such that $1/C \leq |T_y \varphi_x| \leq C$ and $1/C \leq \det(g^x_{ij}(y))^{1/2} \leq C$ uniformly in $y \in B_{\R^n}(0,r)$, and independent of $x \in M$.
 Here, $g^x_{ij}$ are the metric coefficients with respect to the chart $\varphi_x$.
 Putting this together, we obtain
 \begingroup
 \allowdisplaybreaks
 \begin{align*}
   \int_{B(x,r)} |a|^p\,\volg
   & = \int_{B_{\R^n}(0,r)} (|a|^p \circ \varphi_x) \,\varphi_x^*\volg \\
   & \leq C^{p+1} \int_{B_{\R^n}(0,r)} |\varphi_x^*a|^p(y)\,d\lambda(y) \\
   & \leq C^{p+1} \widetilde C_p(r)^p \int_{B_{\R^n}(0,r)}|\varphi_x^* B|^p(y)\,d\lambda(y) \\
   & = C^{3p+2} \widetilde C_p(r)^p \int_{B(x,r)}|B|^p\,\volg,
 \end{align*}
 \endgroup
 with $\lambda$ the Lebesgue measure on $\R^n$.
 Now put $C_p(r) \coloneqq C^{3+2/p} \widetilde C_p(r)$.
\end{proof}

Consider now a local trivialization $\psi \colon p^{-1}(U) \xrightarrow{\cong} U \times \mathbb C$ of $L$ over an open subset $U \subseteq M$.
Then there is $\alpha_\psi \in \Omega^1(U,\mathbb C)$ such that
\[ ((\id{T^*U} \otimes \psi) \circ \nabla \circ \psi^{-1}) f = (d + \alpha_\psi)f \]
for every function $f \in C^\infty(U) = \bsec(U,U\times \mathbb C)$.
For the exterior covariant derivative, this means $(\id{\Lambda T^*U} \otimes \psi) \circ d^\nabla \circ (\id{\Lambda T^*U} \otimes \psi^{-1}) = d + \extp{\alpha_\psi}$ on $\Omega(U)$.
Note that the curvature of $\nabla$ on $U$ is given by
\begin{equation}
 \label{eq:line_bundle_curvature_trivialization}
 R^\nabla|_U = d\alpha_\psi \otimes \id{L} \in \Omega^2(U,\en{L}).
\end{equation}
If $L$ carries a Hermitian metric, then there is a smooth function $w_\psi \colon U \to \R$ such that $|\psi^{-1}(y,\lambda)| = |\lambda|e^{-w_\psi(y)}$ for all $(y,\lambda) \in U \times \Cplx$.

\begin{lem}
 \label{integral_of_connection_form_potential_to_infty}
 Let $\nabla$ be a metric connection on a Hermitian line bundle $L \to M$, and let $V \colon L \to L$ be a self-adjoint vector bundle morphism.
 Suppose that $U \subseteq M$ is open and contractible,  and $\varphi \colon M \to [0,1]$ is smooth with $\supp(\varphi) \subseteq U$.
 Then
 \[ \inf_{g \in C^\infty(U,\mathbb R)} \int_U \big(|\alpha_\psi - dw_\psi + idg|^2 + |V|\big)\,d\mu_g \geq \energy{\nabla,V}{U} \|\varphi\|^2_{L^2(M)} - \|d\varphi\|^2_{L^2(M,T^*M)}, \]
 where $\psi$ is any local trivialization of $L$ over $U$, and where $\alpha_\psi \in \Omega^1(U,\Cplx)$ and $w_\psi \in C^\infty(U,\R)$ are as above.
\end{lem}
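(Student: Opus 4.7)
The plan is to construct an explicit test section supported in $U$ and feed it into the variational definition~\cref{eq:schrodinger_energy}. Since $\psi$ trivialises $L$ over the contractible set $U$, I have the local frame $\sigma \coloneqq \psi^{-1}(\cdot,1)$, with $|\sigma| = e^{-w_\psi}$ and $\nabla\sigma = \alpha_\psi \otimes \sigma$. For each $g \in C^\infty(U,\R)$ I would set
\[
s \coloneqq \varphi\, e^{w_\psi + i g}\,\sigma,
\]
extended by zero to $M$. Because $\supp(\varphi)$ is a compact subset of the open set $U$, $s$ is a well-defined element of $\bsecc(M,L)$ with $\supp(s)\subseteq U$, and $\|s\|^2 = \int_M \varphi^2\,d\mu_g = \|\varphi\|^2_{L^2(M)}$. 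By definition of $\energy{\nabla,V}{U}$, it will then follow that
\[
\energy{\nabla,V}{U}\,\|\varphi\|^2_{L^2(M)} \leq \llangle H_{\nabla,V}s,s \rrangle = \int_M\bigl(|\nabla s|^2 + V|s|^2\bigr)\,d\mu_g.
\]

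The next step is a direct Leibniz-rule calculation: with $f \coloneqq e^{w_\psi+ig}$ one has
\[
\nabla s = \bigl(d(\varphi f) + \varphi f\,\alpha_\psi\bigr)\otimes \sigma = f\bigl(d\varphi + \varphi\,\beta_g\bigr)\otimes \sigma, \qquad \beta_g \coloneqq \alpha_\psi \pm dw_\psi + i\,dg,
\]
(the sign of $dw_\psi$ depending on the sign convention for the weight), so that $|\nabla s|^2 = |d\varphi + \varphi\beta_g|^2$. Here the key algebraic step is the observation that, because $\nabla$ is a \emph{metric} connection, differentiating $|\sigma|^2 = e^{-2w_\psi}$ gives $\re(\alpha_\psi) = -dw_\psi$, so that $\re(\beta_g) = 0$. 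Expanding the square and noting that $d\varphi$ is real therefore kills the cross term:
\[
|\nabla s|^2 = |d\varphi|^2 + 2\varphi\,\langle d\varphi, \re\beta_g\rangle + \varphi^2 |\beta_g|^2 = |d\varphi|^2 + \varphi^2 |\beta_g|^2.
\]
This cancellation is the only place where the hypothesis that $\nabla$ be metric is really used.

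Combining the two estimates, and using the pointwise bounds $V \leq |V|$ (identifying $V$ with a real-valued function, since $\en L$ is canonically trivial) and $\varphi^2 \leq 1$ on $U$, I obtain
\[
\energy{\nabla,V}{U}\,\|\varphi\|^2_{L^2(M)} \leq \int_M |d\varphi|^2\,d\mu_g + \int_U\bigl(|\beta_g|^2 + |V|\bigr)\,d\mu_g = \|d\varphi\|^2_{L^2(M,T^*M)} + \int_U\bigl(|\beta_g|^2+|V|\bigr)\,d\mu_g.
\]
Rearranging and taking the infimum over $g \in C^\infty(U,\R)$ yields the stated inequality.

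There is no serious analytic obstacle; everything takes place among smooth, compactly supported objects on $U$. The main points requiring care are (i) verifying that the extension by zero of $\varphi e^{w_\psi+ig}\sigma$ is genuinely smooth (which holds since $\supp(\varphi)$ is a compact subset of the open set $U$), (ii) bookkeeping of the sign convention that links $\re(\alpha_\psi)$ to $\pm dw_\psi$ --- this is what determines whether the one-form appearing in the final expression is $\alpha_\psi + dw_\psi + i\,dg$ or $\alpha_\psi - dw_\psi + i\,dg$ --- and (iii) noting that contractibility of $U$ is used only to guarantee the existence of the trivialisation $\psi$, not in the energy estimate itself.
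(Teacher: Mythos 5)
Your proposal is correct and follows essentially the same route as the paper's proof: construct a test section supported in $U$, observe that metric compatibility forces $\re(\alpha_\psi)=-dw_\psi$ so the cross term in $|\nabla s|^2$ vanishes, and feed the resulting pointwise bound into the variational characterization~\cref{eq:schrodinger_energy}. The one organizational difference is that the paper keeps the test section fixed as $s=\psi_0^{-1}(\cdot,\varphi)=\varphi e^{w_\psi}\sigma$, replaces $\nabla$ by the gauged connection $\nabla'=\nabla+\Cplxi\,dg\otimes\id{L}$, and invokes \cref{gauge_invariance_of_energy} to identify $\energy{\nabla',V}{U}=\energy{\nabla,V}{U}$; you instead absorb the gauge factor $e^{ig}$ directly into the test section $s=\varphi e^{w_\psi+ig}\sigma$ and apply the energy inequality to $H_{\nabla,V}$ itself. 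These are the same computation, but your version is slightly more economical because it makes \cref{gauge_invariance_of_energy} superfluous at this point.

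One remark on the sign question you flag in point (ii): with the paper's convention $|\psi^{-1}(y,\lambda)|=|\lambda|e^{-w_\psi(y)}$, metric compatibility gives $\re(\alpha_\psi)=-dw_\psi$, so the \emph{purely imaginary} connection form in the isometric trivialization is $\alpha_\psi+dw_\psi$, not $\alpha_\psi-dw_\psi$; this is exactly what your Leibniz computation produces, and there is no further ambiguity to hedge. The lemma remains valid as stated, since $\alpha_\psi+dw_\psi+\Cplxi\,dg$ is purely imaginary and $-2dw_\psi$ is real, so $|\alpha_\psi-dw_\psi+\Cplxi\,dg|^2=|\alpha_\psi+dw_\psi+\Cplxi\,dg|^2+4|dw_\psi|^2$ pointwise and the left-hand side of the displayed inequality only increases under the sign flip; but what your argument actually establishes is the sharper inequality with $\alpha_\psi+dw_\psi$.
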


\begin{proof}
 The proof is a modification of \cite[Lemma~5.1]{Iwatsuka1986} to accommodate globally nontrivial line bundles.
 Because $U$ is contractible, $L|_U$ is trivial, see for instance \cite[p.~15]{Moore2001}.
 Let $\psi \colon p^{-1}(U) \to U \times \mathbb C$ be a local trivialization of $L$, and let $W \colon U \times \mathbb C \to U \times \mathbb C$ be the vector bundle isomorphism $(y,\lambda) \mapsto (y,e^{-w_\psi(y)}\lambda)$.
 Then $\psi_0 \coloneqq W\circ \psi$ is also a local trivialization of $L$ over $U$, and $|\psi_0^{-1}(y,\lambda)|_L = |\lambda|$.
 It follows that $(\id{T^*M}\otimes \psi_0)^{-1}\circ d \circ \psi_0$ is a metric connection on $L|_{U}$.
 Since
 \begin{multline*}
  \nabla|_{U} = (\id{T^*M}\otimes \psi_0)^{-1}\circ (\id{T^*M}\otimes W)\circ (d + \alpha_\psi) \circ W^{-1} \circ \psi_0 = \\
  = (\id{T^*M}\otimes \psi_0)^{-1} \circ (d + \alpha_\psi - dw_\psi) \circ \psi_0
 \end{multline*}
 and $\nabla$ is a metric connection, we see that $\Cplxi(\alpha_\psi - dw_\psi) \in \Omega^1(U,\mathbb R)$.
 Put
 \[ s \coloneqq \psi_0^{-1} \circ (\id{U} ,\varphi|_{U}) \colon U \to L, \]
 so that $s$ is a compactly supported section of $L$ over $U$ which extends to a section of $L$ over $M$ by setting it to zero outside of $\supp(\varphi)$.
 Evidently, $|s|_L^2 = |\varphi|^2 \leq 1$.
 Moreover, for $g \in C^\infty(U,\mathbb R)$, the connection $\nabla' \coloneqq \nabla|_{U} + \Cplxi dg \otimes \id{L}$ on $L|_{U}$ is metric compatible, and
 \begin{multline*}
  |\nabla' s|_{T^*M \otimes L}^2
  = |d\varphi + \varphi(\alpha_\psi - dw_\psi + \Cplxi dg)|^2 = \\
  = |d\varphi|^2 + |\varphi(\alpha_\psi - dw_\psi + \Cplxi dg)|^2
  \leq |d\varphi|^2 + |\alpha_\psi - dw_\psi + \Cplxi dg|^2,
 \end{multline*}
 since the expression in the parentheses is purely imaginary,
 and $|\varphi| \leq 1$.
 Because $ddg = 0$, we have $R^{\nabla'} = R^{\nabla|_{U}}$, and \cref{gauge_invariance_of_energy} implies
 \begin{multline*}
   \int_U \big(|\alpha_\psi - dw_\psi + idg|^2 + |V|\big)\,d\mu_g + \|d\varphi\|_{L^2(M,T^*M)}^2 \geq \\
   \geq \int_{U} \big(|\nabla' s|_{T^*M\otimes L}^2 + \langle Vs,s\rangle_L\big)\,d\mu_g
   = \llangle H_{\nabla',V}s,s \rrangle \geq \\
   \geq \energy{\nabla',V}{U}\,\|s\|_{L^2(M,L)}^2
   = \energy{\nabla,V}{U}\,\|\varphi\|_{L^2(M)}^2.
 \end{multline*}
 Since $\psi$ and $g \in C^\infty(U,\R)$ were arbitrary, the claim follows.
\end{proof}

We are now ready to show \cref{necessary_condition_for_schrodinger_operator_discrete_spectrum,necessary_condition_square_discrete_spectrum}.

\begin{proof}[Proof of \cref{necessary_condition_for_schrodinger_operator_discrete_spectrum}]
 It suffices to prove the claim for $r>0$ small enough, and we take $r$ so that item~\cref{item:schrodinger_operator_bounded_geometry_ball_energy_ball_energy} of \cref{schrodinger_operator_bounded_geometry_ball_energy} and \cref{poincare_lemma_with_estimates} work out.
 Let $(x_k)_{k \in \N}$ be a sequence in $M$ with $x_k \to \infty$ as $k \to \infty$.
 For every $k \in \N$, we find $\varphi_k \in C^\infty(M,[0,1])$ with $\supp(\varphi_k) \subseteq B(x_k,r)$, $\int_M |\varphi_k|^2\,d\mu_g = 1$, and such that $\sup_{k \in \N}\|d\varphi_k\|_{L^\infty(M,T^*M)} < \infty$, see \cref{bounded_geometry_bump_functions}.
 Since $\nabla$ is a metric connection, we have  $R^\nabla = d\alpha_{\psi_k} \otimes \id{L}$ on $B(x_k,r)$ with $\Cplxi \alpha_{\psi_k} \in \Omega^1(B(x_k,r),\mathbb R)$ for any choice of local trivializations $\psi_k \colon L|_{B(x_k,r)} \to B(x_k,r) \times \mathbb C$, see \cref{eq:line_bundle_curvature_trivialization}.
 By \cref{poincare_lemma_with_estimates}, there are $a_k \in \Omega^1(B(x_k,r),\mathbb R)$ with $da_k = id\alpha_{\psi_k}$ and
 \[ \int_{B(x_k,r)} |R^\nabla|^2\,d\mu_g = \int_{B(x_k,r)} |d\alpha_{\psi_k}|^2\,d\mu_g \geq C \int_{B(x_k,r)} |a_k|^2\,d\mu_g, \]
 with $C>0$ independent of $x \in M$.
 Since $da_k = \Cplxi d(\alpha_{\psi_k} - dw_{\psi_k})$ and $B(x_k,r)$ is simply connected, there is $g_k \in C^\infty(B(x_k,r),\mathbb R)$ such that $a_k - \Cplxi \alpha_{\psi_k} + \Cplxi dw_{\psi_k} = dg_k$, \ie $a_k = \Cplxi \alpha_{\psi_k} - \Cplxi dw_{\psi_k} - dg_k$.
 Using \cref{integral_of_connection_form_potential_to_infty}, we find
 \[ \int_{B(x_k,r)} \big(C^{-1} |R^\nabla|^2 + |V|\big)\,d\mu_g \geq \energy{\nabla,V}{B(x_k,r)} \|\varphi_k\|^2_{L^2(M)} - \|d\varphi_k\|^2_{L^2(M,T^*M)}. \]
 If $A$ denotes a lower semibounded self-adjoint extension of $H_{\nabla,V}$ with discrete spectrum, then we have $\liminf_{k \to \infty}\energy{\nabla,V}{B(x_k,r)} \geq \inf\essspec{A} = \infty$ by \cref{eq:liminf_energy_open_subsets_to_infty}, so the claim follows.
\end{proof}

\begin{proof}[Proof of \cref{necessary_condition_square_discrete_spectrum}]
 Let $q \in \{0,n\}$.
 By~\cref{eq:dolbeault_weitzenboeck}, we have $\square^L_{p,q} = \Delta^{\Lambda^{p,q}T^*M\otimes L} + c_p(R^{\Lambda^{p,\bullet}T^*M\otimes L})$, where $c_p$ is the Clifford action on $\Lambda^{p,\bullet}T^*M \otimes L$ from~\cref{eq:clifford_kaehler}.
 By~\cref{eq:general_weitzenboeck_curvature_term},
 \begin{multline}
  \label{eq:bound_of_clifford_remainder}
  |c_p(R^{\Lambda^{p,\bullet}T^*M \otimes L})|
  \leq \sum_{j<k} |R^{\Lambda^{p,\bullet}T^*M \otimes L}(e_j,e_k)| \leq \\ 
  \leq \sqrt{n(2n+1)}\, \bigg(\sum_{j < k} |R^{\Lambda^{p,\bullet}T^*M \otimes L}(e_j,e_k)|^2\bigg)^{1/2} 
  \leq \sqrt{n(2n+1)} \, |R^{\Lambda^{p,\bullet}T^*M \otimes L}|,
 \end{multline}
 so if the spectrum of $\square^L_{p,n}$ is discrete, then \cref{necessary_condition_for_schrodinger_operator_discrete_spectrum} gives
 \[ \lim_{x \to \infty} \int_{B(x,r)}\Big(|R^{\Lambda^{p,\bullet}T^*M \otimes L}|^2 + \sqrt{n(2n+1)} \, |R^{\Lambda^{p,\bullet}T^*M \otimes L}|\Big)\,d\mu_g = \infty \]
 for all $r>0$ small enough.
 Now by H\"older's inequality,
 \begin{equation*}
  \int_{B(x,r)} |R^{\Lambda^{p,\bullet}T^*M \otimes L}|\,d\mu_g
  \leq \sqrt{C} \bigg(\int_{B(x,r)} |R^{\Lambda^{p,\bullet}T^*M \otimes L}|^2\,d\mu_g\bigg)^{1/2},
 \end{equation*}
 with $C \coloneqq \sup_{x \in M} \mu_g(B(x,r))$. \footnote{   The supremum is finite because in normal coordinates around $x$ and with small enough radius, the metric coefficients $g_{ij}^x$ have uniform two-sided bounds, independent of $x$, see \cref{rem:bounded_geometry_bounded_coefficients}, hence $\mu_g(B(x,r)) = \int_{B_{\R^n}(0,r)} \det(g_{ij}^x)^{1/2}\,d\lambda$ is also bounded from both sides.
 }
 Consequently,
 \[ \int_{B(x,r)} |R^{\Lambda^{p,\bullet}T^*M \otimes L}|^2\,d\mu_g \to \infty \quad\text{as } x \to \infty, \]
 which is the same as \cref{eq:necessary_condition_square_discrete_spectrum} since the curvature of $\Lambda^{p,\bullet}T^*M$ is bounded due to $M$ having $0$-bounded geometry.
 In the case where $1 \leq q \leq n-1$ and $L$ is $(q+1)$-Nakano lower semibounded, we use \cref{spectral_percolation_specific} to reduce this case to the first one, see also \cref{rem:nakano_lower_semibounded}.
 Finally, if $\square^L_{p,q}$ has discrete spectrum and $L^*$ is $(n-q+1)$-Nakano lower semibounded, then $\square^L_{p,0}$ has discrete spectrum by \cref{spectral_percolation_serre}, so \cref{eq:necessary_condition_square_discrete_spectrum} also follows.
\end{proof}

\appendix
\ifams\else
 \gdef\thesection{\Alph{section}}  \makeatletter
 \makeatother
\fi

\section{Riemannian manifolds of bounded geometry}
\label{sec:bounded_geometry}

Let $(M,g)$ be a Riemannian manifold.
In this \namecref{sec:bounded_geometry}, we will only consider the case where $M$ has no boundary.
For $p \in M$, we denote by $\exp_p \colon \mathscr D_p \subseteq T_pM \to M$ the (Riemannian) exponential map.
The \defemph{injectivity radius of $(M,g)$ at a point $p \in M$} is the supremum of all $r > 0$ such that $\exp_p$ restricts to a diffeomorphism on $B_{T_pM}(0,r)$, where $B_{T_pM}(0,r)$ is the open ball in $(T_pM,g_p)$ around $0$ and with radius $r$.
The image of this ball under $\exp_p$ is then $B(p,r) \coloneqq \{q \in M : d_g(p,q) < r\}$, the open ball for the Riemannian distance $d_g$.
The \defemph{injectivity radius of $(M,g)$}, denoted by $\rinj{M,g}$, is the infimum over all injectivity radii at points $p \in M$.

\begin{defn}
 \label{def:bounded_geometry}
 A connected Riemannian manifold $(M,g)$ is said to be of \defemph{$k$-bounded geometry} if its injectivity radius $\rinj{M,g}$ is positive, and there exist constants $C_j > 0$ such that $|\nabla^j R^M| \leq C_j$ for all $0 \leq j \leq k$, where $\nabla^j R^M$ is the $j$\textsuperscript{th} covariant derivative of the Riemannian curvature tensor of $M$.
 If $(M,g)$ is of $k$-bounded geometry for all $k \in \N$, then it is said to be of \defemph{bounded geometry}.
\end{defn}

\begin{rem}
 \label{rem:bounded_geometry}
 \begin{inlineenum}
 \item
  All Riemannian manifolds of $k$-bounded geometry are complete due to the bound on the injectivity radius, see \cite[Proposition~2.2]{Eichhorn2008}.

 \item
  There is also a notion of bounded geometry for vector bundles:
  a Hermitian (or Riemannian) vector bundle $E \to M$ with metric connection $\nabla$ is called a \defemph{Hermitian (Riemannian) vector bundle of $k$-bounded geometry} if $M$ is a Riemannian manifold of $k$-bounded geometry, and the curvature of $\nabla$ satisfies $|\nabla^j R^\nabla| \leq C_j$ for all $0 \leq j \leq k$, uniformly on $M$.
  Again, $E$ is said to be of \emph{bounded geometry} if this holds for all $k \in \N$.
   Most prominently, the tangent bundle as well as all tensor bundles of a manifold of bounded geometry (with the Levi--Civita connection) are vector bundles of bounded geometry \cite[p.~45]{Eldering2013}.
 \end{inlineenum}
\end{rem}

Manifolds of bounded geometry come with a nice cover by open subsets, namely the geodesic balls $B(p,r)$ for fixed $r < \rinj{M,g}$ small enough, see
\cref{bounded_geometry_partitions_frames} below.
Recall that any choice of orthonormal basis $\{e_j\}_{j=1}^n$ of $T_pM$, with $p \in M$ fixed, gives rise to a chart of $M$ via
$$ B(p,r) \to B_{\R^n}(0,r) \subseteq \R^n, \quad q \mapsto (\exp_p \circ \tau)^{-1}(q), $$
where $\tau \colon \R^n \to T_pM$ is the isometry $\tau(t_1,\dotsc,t_n) \coloneqq t_1 e_1 + \dotsb + t_n e_n$.
These charts are called \defemph{(Riemannian) normal coordinates}.
The following \cref{bounded_geometry_bounded_distortion} will show that the \defemph{distortion} of normal coordinates can be uniformly bounded on a manifold of $0$-bounded geometry.
An explicit statement of this fact can be found in \cite[Lemma~2.2]{Roe1988}.

\begin{lem}
 \label{bounded_geometry_bounded_distortion}
 Let $(M,g)$ be a Riemannian manifold with positive injectivity radius and sectional curvatures uniformly bounded, \ie
 $|K(\Pi)| \leq C$
 for some $C>0$ and for all two-dimensional subspaces $\Pi \subseteq T_p M$ and every $p \in M$.
 Then there exist $0 < r < \rinj{M,g}$ and $C_1,C_2 > 0$ such that
 \begin{equation}
  \label{eq:bounded_geometry_bounded_distortion_proof_1}
  C_1 |X| \leq |(T_v \exp_p)X| \leq C_2|X|
 \end{equation}
 for all $p \in M$, all $0 \neq v \in B_{T_pM}(0,r)$, and all $X \in T_pM$.
\end{lem}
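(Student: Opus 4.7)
The plan is to express $T_v \exp_p$ via Jacobi fields and to apply Rauch's comparison theorem, leveraging the uniform sectional curvature bound. Fix $p \in M$ and $0 \neq v \in B_{T_pM}(0,r)$, and under the identification $T_v(T_pM) \cong T_pM$ decompose $X = X^T + X^N$ orthogonally with $X^T$ parallel to $v$. Set $\gamma(s) \coloneqq \exp_p(sv)$, a geodesic of speed $|v|$. The Gauss lemma handles the radial component: $(T_v \exp_p)(X^T)$ is parallel to $\dot\gamma(1)$ with $|(T_v \exp_p)(X^T)| = |X^T|$. For the perpendicular part, the variation $(s,t) \mapsto \exp_p(s(v + tX^N))$ shows $(T_v \exp_p)(X^N) = J(1)$, where $J$ is the Jacobi field along $\gamma$ determined by $J(0) = 0$ and $J'(0) = X^N$; standard Jacobi field theory moreover gives $J(s) \perp \dot\gamma(s)$ for all $s$, so the image of the orthogonal decomposition is itself orthogonal.

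The crux is a uniform two-sided bound on $|J(1)|$. Reparametrize to unit speed by $\tilde\gamma(\tau) \coloneqq \gamma(\tau/|v|)$ and $\tilde J(\tau) \coloneqq J(\tau/|v|)$; this produces a normal Jacobi field along a unit-speed geodesic with $\tilde J(0) = 0$ and $|\tilde J'(0)| = |X^N|/|v|$. Under the assumption $|K| \leq C$, Rauch's comparison theorem in its two forms — the lower bound obtained by comparing with the round sphere of constant curvature $C$ (using $K \leq C$) and the upper bound obtained by comparing with hyperbolic space of curvature $-C$ (using $K \geq -C$) — yields, for every $\tau \in (0, \pi/\sqrt{C})$,
\begin{equation*}
 \frac{\sin(\sqrt{C}\,\tau)}{\sqrt{C}}\,|\tilde J'(0)|
 \;\leq\; |\tilde J(\tau)|
 \;\leq\; \frac{\sinh(\sqrt{C}\,\tau)}{\sqrt{C}}\,|\tilde J'(0)|.
\end{equation*}
Evaluating at $\tau = |v|$ and translating back via $|J(1)| = |\tilde J(|v|)|$ produces $c_1(|v|)\,|X^N| \leq |(T_v \exp_p)(X^N)| \leq c_2(|v|)\,|X^N|$, with $c_1(\rho) \coloneqq \sin(\sqrt{C}\rho)/(\sqrt{C}\rho)$ and $c_2(\rho) \coloneqq \sinh(\sqrt{C}\rho)/(\sqrt{C}\rho)$.

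To conclude, I would fix $r$ smaller than both $\rinj{M,g}$ and $\pi/(2\sqrt{C})$. On $(0,r]$ the quotient $c_1$ is bounded below and $c_2$ bounded above by constants depending only on $C$ and $r$. Combining the Gauss lemma contribution with the orthogonal Pythagorean identity $|(T_v\exp_p)(X)|^2 = |X^T|^2 + |J(1)|^2$ then produces the desired constants $C_1, C_2 > 0$ in~\cref{eq:bounded_geometry_bounded_distortion_proof_1}. The main potential obstacle is verifying that Rauch's theorem applies uniformly across all base points $p$ and all initial directions $v/|v|$, but this is automatic from the uniform sectional curvature hypothesis and requires no further global structure on $M$; the choice of $r$ merely guarantees that we stay strictly inside the injectivity radius and below the first conjugate point in the positive constant curvature comparison space.
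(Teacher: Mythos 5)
Your proof is correct and follows essentially the same route as the paper: applying the Rauch comparison theorem, with the constant-curvature space forms of curvature $\pm C$ as comparison spaces, to the Jacobi field $J(t)=t(T_{tv/|v|}\exp_p)(X)$ along the radial unit-speed geodesic. You merely spell out the Gauss-lemma orthogonal split and the explicit comparison functions $\sin(\sqrt{C}\tau)/(\sqrt{C}\tau)$ and $\sinh(\sqrt{C}\tau)/(\sqrt{C}\tau)$ that the paper delegates to the reference.
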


\begin{proof}
 The bounds~\cref{eq:bounded_geometry_bounded_distortion_proof_1} can be obtained as a consequence of the Rauch comparison theorem (see \cite[p.~215]{Carmo1992} for a proof), applied to the Jacobi field $J(t) \coloneqq t(T_{tv/|v|}\exp_p)(X)$ along the geodesic $\gamma(t) \coloneqq \exp_{p}(tv/|v|)$ and comparing $M$ with the space forms of constant sectional curvature $\pm C$.
\end{proof}

\begin{rem}
 \label{rem:bounded_geometry_bounded_coefficients}
 \begin{inlineenum}
 \item
  Using \cref{bounded_geometry_bounded_distortion}, it is easy to see that if $(M,g)$ has uniformly bounded sectional curvature, then the coefficients $g_{ij}^p$ of the metric in normal coordinates $\varphi_p \coloneqq (\exp_p \circ \tau)^{-1}|_{B(0,r)}$ around a sufficiently small ball $B(p,r)$ are bounded from above and below, independent of $p$.
  Indeed, for $y \in B_{\R^n}(0,r)$, the coefficients $g_{ij}^p(y)$ are just the components of the bilinear form $(\tau^*\exp_p^*g)(y)$ on $\R^n$ with respect to the standard basis of $\R^n$.

 \item
  It is harder to argue that this also holds for derivatives of the metric coefficients:
  in \cite{Kaul1976}, it was shown that if $|R^M| \leq C_0$ and $|\nabla R^M| \leq C_1$, then also the Christoffel symbols with respect to normal coordinates (of sufficiently small radius) are bounded, uniformly in $p \in M$.
  Equivalently, the derivatives of the metric coefficients in such coordinates are also uniformly bounded.
  This was extended to arbitrary derivatives by Eichhorn in \cite[Corollary~2.6]{Eichhorn1991a}:
  if $(M,g)$ is open and complete and satisfies $|\nabla^j R^M| \leq C_j$ for $0 \leq j \leq k$, then the derivatives of order up to $k$ of the metric coefficients in normal coordinates around $p \in M$, and with sufficiently small radius $r$, are also bounded, uniformly in $p$.

 \item
  There is also a corresponding result for vector bundles, see \cite[Theorem~3.2]{Eichhorn1991a}.
  Assume that $(M,g)$ is of $k$-bounded geometry, and that $E \to M$ is a Hermitian vector bundle, equipped with a metric connection of $k$-bounded geometry, in the sense of \cref{rem:bounded_geometry}.
  Then there is $r>0$ and constants $\widetilde C_\gamma > 0$ such that
  \begin{equation}
   \label{eq:bounded_connection_coefficients_vector_bundle}
   |\partial^\gamma \Gamma^{\alpha}_{i\beta}| \leq \widetilde C_\gamma
  \end{equation}
  for all multiindices $|\gamma| \leq k-1$, all $1 \leq \alpha,\beta \leq \rank E$, and all $1 \leq i \leq \dim M$.
  Here, $\Gamma^\alpha_{i\beta}$ are the connection coefficients of the connection on $E$ with respect to a \defemph{synchronous framing}, \ie with respect to an orthonormal frame $(\xi^p_1,\dotsc,\xi^p_N)$ of $E|_{B(p,r)}$ obtained by parallel transporting an orthonormal basis of $E_p$ along the radial geodesics in $B(p,r)$.
  Thus, $\sum_{\beta=1}^n \Gamma^{\alpha}_{i\beta} \xi^p_\alpha = \nabla^E_{\partial_i} \xi^p_\beta$ or, equivalently, $\Gamma^{\alpha}_{i\beta} = \langle \nabla^E_{\partial_i} \xi^p_\beta , \xi^p_\alpha \rangle$, and the point is that the estimates~\cref{eq:bounded_connection_coefficients_vector_bundle} are again uniform in $p \in M$.
 \end{inlineenum}
\end{rem}

\begin{lem}
 \label{bounded_geometry_bump_functions}
 Let $(M,g)$ be a Riemannian manifold of $0$-bounded geometry.
 There exists $r \in (0,\rinj{M,g})$ and a constant $C>0$ with the following property:
 for all $p \in M$, there exists a smooth function $f_p \colon M \to [0,1]$ such that
 \begin{enumerate}
 \item
  $\supp(f_p) \subseteq B(p,r)$,
 \item
  $\|df_p\|_{L^\infty(M,T^*M)} \leq C$, and
 \item
  $C \geq \int_M |f_p|^2\,d\mu_g \geq 1/C$.
 \end{enumerate}
\end{lem}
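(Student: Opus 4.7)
The plan is to transplant a single fixed Euclidean bump function to each ball $B(p,r)$ via normal coordinates, then verify all three uniform bounds using the distortion estimates from \cref{bounded_geometry_bounded_distortion} and \cref{rem:bounded_geometry_bounded_coefficients}.

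First I would fix $r \in (0,\rinj{M,g})$ small enough so that \cref{bounded_geometry_bounded_distortion} applies uniformly, yielding constants $C_1, C_2 > 0$ with $C_1|X| \leq |(T_v \exp_p)X| \leq C_2|X|$ for every $p \in M$ and $v \in B_{T_pM}(0,r)$. By the first item of \cref{rem:bounded_geometry_bounded_coefficients}, the normal-coordinate metric coefficients $g^p_{ij}$ satisfy uniform two-sided bounds on $B_{\R^n}(0,r)$, independent of $p$, so that $1/C_0 \leq \det(g^p_{ij}(y))^{1/2} \leq C_0$ for some $C_0 > 0$.

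Next I would choose, once and for all, a smooth reference bump $\varphi \in C^\infty_c(\R^n,[0,1])$ with $\supp(\varphi) \subseteq B_{\R^n}(0,r)$ and $\varphi \equiv 1$ on $B_{\R^n}(0,r/2)$. For each $p \in M$, pick an orthonormal basis of $T_pM$, let $\tau \colon \R^n \to T_pM$ be the induced isometry, and set $\varphi_p \coloneqq (\exp_p \circ \tau)^{-1} \colon B(p,r) \to B_{\R^n}(0,r)$. Define $f_p \coloneqq \varphi \circ \varphi_p$, extended by zero to all of $M$. The support condition is immediate from the choice of $\varphi$, and $f_p$ is smooth because $\varphi$ vanishes near $\partial B_{\R^n}(0,r)$.

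For the gradient bound, at any $q \in B(p,r)$ with $v \coloneqq \tau^{-1}(\exp_p^{-1}(q))$ we have $T_q \varphi_p = \tau^{-1} \circ (T_v \exp_p)^{-1}$, whose operator norm is at most $1/C_1$ by the lower half of the distortion estimate. Hence $|df_p|_g \leq (1/C_1)\,\|d\varphi\|_{L^\infty(\R^n)}$, uniformly in $p$. For the $L^2$ bound, the change-of-variables formula in normal coordinates gives
\[
\int_M |f_p|^2\,d\mu_g = \int_{B_{\R^n}(0,r)} |\varphi(y)|^2 \det(g^p_{ij}(y))^{1/2}\,d\lambda(y),
\]
and the uniform bounds on $\det(g^p_{ij})^{1/2}$ sandwich this integral between $\|\varphi\|_{L^2(\R^n)}^2/C_0$ and $C_0 \|\varphi\|_{L^2(\R^n)}^2$. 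Taking $C$ to be the maximum of these inverse constants completes the proof. There is no real obstacle here; the whole statement is essentially a compilation of the already-established uniform bounds on normal-coordinate charts.
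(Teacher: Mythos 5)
Your proposal is correct and follows the same strategy as the paper: transplant a fixed Euclidean bump to each ball via normal coordinates, then use the uniform distortion bound from \cref{bounded_geometry_bounded_distortion} for the gradient estimate and the uniform two-sided bound on $\det(g^p_{ij})^{1/2}$ for the $L^2$ estimate. (There is a tiny notational slip where you set $v := \tau^{-1}(\exp_p^{-1}(q)) \in \R^n$ but then use $T_v\exp_p$, which requires $v \in T_pM$; you mean $v := \exp_p^{-1}(q)$.)
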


\begin{proof}
 Take $r \in (0,\rinj{M,g})$ small enough such that the conclusion of \cref{bounded_geometry_bounded_distortion} holds, and such that the coefficients of the metric in normal coordinates on $B(p,r)$ are uniformly bounded, independent of $p$, see \cref{rem:bounded_geometry_bounded_coefficients}.
 Let $f \in C^\infty_c(B_{\R^n}(0,r),[0,1])$ be any nonzero function, and put $f_p \coloneqq f \circ \varphi_p^{-1} \colon B(p,r) \to [0,1]$, where $\varphi_p \coloneqq (\exp_p \circ \tau)|_{B_{\R^n}(0,r)}$ and $\tau \colon \R^n \to T_pM$ is an isometry such that $\varphi_p$ is orientation preserving.
 Then $f_p$ has compact support in $B(p,r)$, and we extend it by zero to all of $M$.
 For $X \in T_xM$, we have
 $$
 |df_p(X)|
 = |(T_x f_p)X|
 = |T_{\varphi_p^{-1}(x)} f \circ T_x \varphi_p^{-1} X|
 \leq \big|df(\varphi_p^{-1}(x))\big| \big|(T_x (\exp_p^{-1}))X\big|
 \leq C_1^{-1} \|df\|_{L^\infty} |X|
 $$
 by \cref{bounded_geometry_bounded_distortion}, hence $\|df_p\|_{L^\infty} \leq C_1^{-1} \|df\|_{L^\infty}$.
 Moreover,
 \begin{equation}
  \label{eq:bounded_geometry_bump_functions_proof_1}
   \widetilde C \|f\|_{L^2(B_{\R^n}(0,r))}^2 \geq
   \int_{B_{\R^n}(0,r)} |f(y)|^2\,\det(g_{ij}^p(y))^{1/2}\,d\lambda(y)
   \geq \widetilde C^{-1} \|f\|_{L^2(B_{\R^n}(0,r))}^2
 \end{equation}
 independent of $p$, with $\lambda$ the Lebesgue measure, and where $g_{ij}^p$ are the metric coefficients with respect to the normal coordinate chart $\varphi_p$, and the constant $\widetilde C$ is such that $1/\widetilde C \leq \det(g_{ij}^p)^{1/2} \leq \widetilde C$, \cf \cref{rem:bounded_geometry_bounded_coefficients}.
 Since the middle term in \cref{eq:bounded_geometry_bump_functions_proof_1} is $\int_M |f_p|^2\,\volg$, we are finished.
\end{proof}

\begin{prop}\label{bounded_geometry_partitions_frames}
 Let $(M,g)$ be a noncompact manifold of $1$-bounded geometry.
 Then there exists $r_0 \in (0,\rinj{M,g})$ such that for all $0 < r < r_0$ there is
 \begin{enumerate}
  \item\label{item:bounded_geometry_partitions_frames_cover}
   a countable cover $\{B(p_k,r)\}_{k \geq 1}$ of $M$ by geodesic balls, and a number $N > 0$ such that $\bigcap_{k \in J} B(p_k,r) \neq \emptyset$ implies $|J| \leq N$ for all subsets $J \subseteq \N$ (\ie the cover has uniformly finite intersection multiplicity),

  \item\label{item:bounded_geometry_partitions_frames_partition}
   a sequence of functions $\varphi_k \in C^\infty(M,[0,1])$ such that $\supp(\varphi_k) \subseteq B(p_k,r)$, $\sum_{k=1}^\infty \varphi_k^2 = 1$, and with $\sup_{k \in \N} \|d\varphi_k\|_{L^\infty} < \infty$, and

  \item\label{item:bounded_geometry_partitions_frames_frame}
   for every $k \in \N$, an orthonormal frame $(\xi_1^k,\dotsc,\xi_n^k)$ of $TM|_{B(p_k,r)}$ with
   $$ \sup_{k,j} \sup_{x \in B(p_k,r)} |\nabla \xi_j^k|_x < \infty. $$
 \end{enumerate}
\end{prop}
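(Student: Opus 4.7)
The plan is to proceed in four steps: construct the cover by a maximal separation argument, bound the intersection multiplicity via volume comparison, build the partition of unity using the bump functions already furnished by \cref{bounded_geometry_bump_functions}, and finally produce the frames by parallel transport along radial geodesics.

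First I would fix $r_0 \in (0,\rinj{M,g})$ small enough so that both \cref{bounded_geometry_bounded_distortion,bounded_geometry_bump_functions} apply, and for $0 < r < r_0$ choose a maximal $(r/2)$\nobreakdash-separated subset $\{p_k\}_{k \in \N}$ of $M$. The Hopf--Rinow theorem (complete metric) combined with separability makes this set countable, and maximality forces $\{B(p_k,r/2)\}_{k \in \N}$ to cover $M$, while the balls $B(p_k, r/4)$ are pairwise disjoint. For \cref{item:bounded_geometry_partitions_frames_cover}, suppose $B(p_{k_1},r),\dotsc,B(p_{k_N},r)$ share a common point $x$. Then each $B(p_{k_j},r/4) \subseteq B(x, 5r/4)$ and these are disjoint; since $1$\nobreakdash-bounded geometry gives a uniform lower bound on Ricci curvature, Bishop--Gromov volume comparison yields uniform upper bounds on $\mu_g(B(x,5r/4))$ and uniform lower bounds on $\mu_g(B(p_{k_j},r/4))$ (using \cref{rem:bounded_geometry_bounded_coefficients} to compare with Euclidean volume in normal coordinates), so $N$ is bounded independently of $x$.

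For \cref{item:bounded_geometry_partitions_frames_partition}, I would apply \cref{bounded_geometry_bump_functions}, but modify the proof to produce bump functions $f_k \in C^\infty(M,[0,1])$ that \emph{equal} $1$ on $B(p_k,r/2)$ and are supported in $B(p_k,r)$, with $\|df_k\|_{L^\infty} \leq C$ uniformly in $k$. This is achieved by choosing the reference function $f$ in the proof of \cref{bounded_geometry_bump_functions} to equal $1$ on $B_{\R^n}(0,r/2)$, which works because the distortion of normal coordinates is uniformly controlled. Since $\{B(p_k,r/2)\}$ covers $M$, we have $\sum_k f_k^2 \geq 1$ pointwise, and by \cref{item:bounded_geometry_partitions_frames_cover} at most $N$ terms contribute at any point, so $1 \leq \sum_k f_k^2 \leq N$. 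Then $\varphi_k \coloneqq f_k / \bigl(\sum_j f_j^2\bigr)^{1/2}$ gives the desired partition; the uniform gradient bound follows by the quotient rule, the local finiteness, and $\|df_k\|_{L^\infty} \leq C$.

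For \cref{item:bounded_geometry_partitions_frames_frame}, I would use a \emph{synchronous framing} around each $p_k$: pick an orthonormal basis of $T_{p_k}M$ and parallel\nobreakdash-transport it along the radial geodesics emanating from $p_k$ to obtain an orthonormal frame $(\xi_1^k,\dotsc,\xi_n^k)$ of $TM|_{B(p_k,r)}$. By construction $\nabla_{\partial_\rho} \xi_j^k = 0$ along these geodesics, but off the radial directions one must estimate $\nabla \xi_j^k$ in terms of the Christoffel symbols of $g$ in normal coordinates. Here I would invoke the result of Kaul (extended by Eichhorn \cite{Eichhorn1991a}) cited in \cref{rem:bounded_geometry_bounded_coefficients}: under $1$\nobreakdash-bounded geometry (i.e.\ $|R^M|$ and $|\nabla R^M|$ bounded), the Christoffel symbols in normal coordinates of sufficiently small radius are uniformly bounded on $M$. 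This gives $|\nabla \xi_j^k| \leq C'$ uniformly in $k, j$, and the base point.

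The principal obstacle I anticipate is the frame step \cref{item:bounded_geometry_partitions_frames_frame}, since uniformity of $|\nabla \xi_j^k|$ is not automatic from mere pointwise curvature bounds: it genuinely requires the $C^1$\nobreakdash-regularity of the exponential chart provided by Kaul--Eichhorn, which is precisely why the hypothesis is $1$\nobreakdash-bounded geometry rather than merely $0$\nobreakdash-bounded. Once this is in place, everything else reduces to elementary separation and volume arguments combined with the already proved \cref{bounded_geometry_bounded_distortion,bounded_geometry_bump_functions}.
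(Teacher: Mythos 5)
Your constructions for items (i) and (ii) take a genuinely different route from the paper, which simply cites Eldering, Shubin, and Kaad--Lesch for those two parts. Your maximal $(r/2)$\nobreakdash-separation argument (countability via pairwise disjoint $B(p_k,r/4)$ and second countability, covering by maximality, multiplicity via comparing $\mu_g(B(p_{k_j},r/4))$ against $\mu_g(B(x,5r/4))$) and your explicit normalization $\varphi_k = f_k / \bigl(\sum_j f_j^2\bigr)^{1/2}$ built from modified bump functions are both correct and self\nobreakdash-contained, using only the paper's internal toolbox (\cref{bounded_geometry_bounded_distortion,bounded_geometry_bump_functions,rem:bounded_geometry_bounded_coefficients}). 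The paper buys brevity by deferring to the literature; your version buys transparency. One small remark: Bishop--Gromov is overkill for the lower volume bound — the two-sided bounds on $\det(g^p_{ij})^{1/2}$ from \cref{rem:bounded_geometry_bounded_coefficients} already give uniform two-sided bounds on $\mu_g(B(p,s))$ for $s$ below the injectivity radius, as you in fact note parenthetically.

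For item (iii) the high-level idea (synchronous framing plus Kaul--Eichhorn uniformity) matches the paper, but your deduction as written has a gap. You invoke ``the Christoffel symbols in normal coordinates are uniformly bounded,'' which is Kaul's result, item (ii) of \cref{rem:bounded_geometry_bounded_coefficients}, and then assert that this \emph{gives} $|\nabla\xi_j^k|\leq C'$. That inference does not go through directly. Expanding $\nabla_{\partial_i}\xi_\beta^k$ in the coordinate frame produces both the coordinate Christoffel symbols and the \emph{transverse} partial derivatives $\partial_i(\xi_\beta^k)^m$ of the frame components; the latter are obtained by differentiating the parallel transport ODE in directions transverse to the radial geodesic, which brings in first derivatives of the coordinate Christoffel symbols. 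Under $1$\nobreakdash-bounded geometry one only controls the $\Gamma^m_{il}$ themselves, not $\partial\Gamma^m_{il}$, so this route stalls. The paper instead uses item (iii) of \cref{rem:bounded_geometry_bounded_coefficients} (Eichhorn's vector-bundle version): the connection coefficients $\Gamma^\alpha_{i\beta}$ \emph{with respect to the synchronous framing itself} are uniformly bounded under $1$\nobreakdash-bounded geometry, and since $\nabla_X\xi_\beta^k = \sum_{i,\alpha}X^i\Gamma^\alpha_{i\beta}\xi^k_\alpha$ with $|\xi^k_\alpha|=1$, only the bound on $\sum_i|X^i|$ for unit $|X|$ is left, which follows from $|X^i|\leq\|g(x)^{-1/2}\|_{\bdop{\R^n}}\leq\sqrt{nC_0}$. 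You should cite the synchronous-framing connection coefficients rather than the coordinate-frame Christoffel symbols, and include that last estimate converting coordinate components to Riemannian length.
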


\begin{proof}
 For \cref{item:bounded_geometry_partitions_frames_cover,item:bounded_geometry_partitions_frames_partition}, see \cite[Lemma~2.16 and Corollary~2.18]{Eldering2013}, \cite[Lemma~1.2 and Lemma~1.3]{Shubin1992}, or \cite[Lemma~2.4]{Kaad2013}.
 Pick an orthonormal basis $(e_1^k,\dotsc,e_n^k)$ of $T_{p_k}M$, and denote by $(\xi_1^k,\dotsc,\xi_n^k)$ the frame of $TM|_{B(p_k,r)}$ that is obtained by parallel transporting the basis of $T_{p_k}M$ along the radial geodesics in $B(p_k,r)$.
 Then  $\nabla_X \xi_\alpha^k = \sum_{i,\beta} X^i \Gamma^\alpha_{i\beta}(x)\xi_\beta^k$, hence
 \begin{equation}
  \label{eq:bounded_geometry_partitions_frames_proof_1}
  |\nabla \xi_\alpha^k|_x
  = \sup_{|X|=1} |\nabla_X \xi_\alpha^k|_x
  \leq \sup_{|X|=1} \sum_{i,\beta} |X^i| |\Gamma^\alpha_{i\beta}(x) \xi_\beta^k|_x
  \leq \sup_{|X|=1} \sum_{i,\beta} |X^i| |\Gamma^\alpha_{i\beta}(x)|,
 \end{equation}
 where $\Gamma^\alpha_{i\beta}$ are the Christoffel symbols corresponding to the trivialization of $TM|_{B(p_k,r)}$ induced by the frame $(\xi_1^k,\dotsc,\xi_n^k)$ and the normal coordinates, and $X = X^i \partial_i$ with $\partial_i$ the normal coordinate vector fields.
 By the discussion about bundles of bounded geometry in \cref{rem:bounded_geometry_bounded_coefficients}, $|\Gamma^\alpha_{i\beta}(x)|$ is bounded by constants uniform in $x \in B(p_k,r)$, $k \in \N$, and $\alpha \in \{1,\dotsc,n\}$.
 Let $|\arghere|_e$ denote the Euclidean norm on $\R^n$.
 If $|X|=1$, then $|g(x)^{1/2} X|_e = 1$, where we view $g(x)$ as the symmetric matrix $(g_{ij}(x))_{i,j}$ (components in normal coordinates on $B(p_k,r)$), and $X$ as the vector $(X^1,\dotsc,X^n)$.
 It follows that
 \begin{equation}
  \label{eq:bounded_geometry_partitions_frames_proof_2} 
  |X^i| \leq |X|_e = \big|g(x)^{-1/2}g(x)^{1/2} X\big|_e \leq \big\|g(x)^{-1/2}\big\|_{\bdop{\R^n}} \big|g(x)^{1/2}X\big|_e = \big\|g(x)^{-1/2}\big\|_{\bdop{\R^n}}
 \end{equation}
 for $1 \leq i \leq n$, where $\|\arghere\|_{\bdop{\R^n}}$ is the operator norm.
 If $|g^{ij}| \leq C_0$ on $B(p_k,r)$ as in \cref{rem:bounded_geometry_bounded_coefficients}, then $\|g(x)^{-1}\|_{\bdop{\R^n}} \leq \trace(g(x)^{-1}) \leq nC_0$, and hence $\|g(x)^{-1/2}\|_{\bdop{\R^n}} \leq \sqrt{nC_0}$, uniformly in $x \in B(p_k,r)$, and not depending on $k$ and $r$.
 Combining this with \cref{eq:bounded_geometry_partitions_frames_proof_1,eq:bounded_geometry_partitions_frames_proof_2} finishes the proof.
\end{proof}

Since K\"ahler manifolds are also Riemannian manifolds, we may consider K\"ahler manifolds of bounded geometry.
The next result is just a simple adaptation of \cref{bounded_geometry_partitions_frames} to this case:

\begin{prop}
 \label{bounded_geometry_partitions_frames_kaehler}
 Let $X$ be a K\"ahler manifold of $1$-bounded geometry and complex dimension $n$, and let $\{B(p_k,r)\}_{k \geq 1}$ be a cover of $X$ as in \cref{bounded_geometry_partitions_frames}.
 Then for every $k \in \N$ there exists an orthonormal frame $(w_1^k,\dotsc,w_n^k)$ of $T^{1,0}X|_{B(p_k,r)}$ with
 $$ \sup_{k,j} \sup_{x \in B(p_k,r)} |\nabla w_j^k|_x < \infty. $$
 Moreover, $(\ol w_1^k,\dotsc,\ol w_n^k)$ is an orthonormal frame of $T^{0,1}X|_{B(p_k,r)}$ with the same boundedness property.
\end{prop}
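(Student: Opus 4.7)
The plan is to mimic the proof of \cref{bounded_geometry_partitions_frames} in the complex setting, using the Kähler hypothesis as the bridge between the real and complex geometry. Starting from an orthonormal basis $(w_1^k,\dotsc,w_n^k)$ of $T^{1,0}_{p_k}X$ (viewed as a complex subspace of $T_{p_k}X\otimes_\R\Cplx$), I would extend it by parallel transport of the complexified Levi--Civita connection along the radial geodesics emanating from $p_k$, obtaining complex vector fields $\xi_j^k$ on $B(p_k,r)$.

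The key observation is that on a Kähler manifold the complex structure $J$ is parallel with respect to the Levi--Civita connection, so parallel transport preserves the $(1,0)$/$(0,1)$ type decomposition; hence each $\xi_j^k$ lies in $T^{1,0}X|_{B(p_k,r)}$. Since Levi--Civita is metric and the Hermitian inner product on $TX \otimes_\R \Cplx$ is the sesquilinear extension of $g$, the frame remains orthonormal along each radial geodesic. Setting $w_j^k \coloneqq \xi_j^k$ therefore gives an orthonormal frame of $T^{1,0}X|_{B(p_k,r)}$. Taking complex conjugates immediately yields the corresponding frame $(\ol w_1^k,\dotsc,\ol w_n^k)$ of $T^{0,1}X|_{B(p_k,r)}$.

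For the uniform bound on $|\nabla w_j^k|$, I would repeat verbatim the Christoffel-symbol argument from the proof of \cref{bounded_geometry_partitions_frames}: in the synchronous framing obtained by radial parallel transport, the connection coefficients $\Gamma^\alpha_{i\beta}$ (for the induced metric connection on the Hermitian vector bundle $T^{1,0}X$) are bounded uniformly in $k$ and in $x \in B(p_k,r)$ by the bounded-geometry estimate \cref{eq:bounded_connection_coefficients_vector_bundle} from \cref{rem:bounded_geometry_bounded_coefficients}, applied to $E = T^{1,0}X$. This application is legitimate because $T^{1,0}X$, being a holomorphic subbundle of $(TX\otimes_\R\Cplx, J)$ and isomorphic to $TX$ as a real vector bundle, inherits the $1$-bounded-geometry property of $TM$ (the curvature of its Chern connection, which coincides with the restriction of the complexified Levi--Civita connection in the Kähler case, is controlled by $R^M$). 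Combining this uniform bound on the $\Gamma^\alpha_{i\beta}$ with the uniform two-sided bound on the metric coefficients in normal coordinates, exactly as in \cref{eq:bounded_geometry_partitions_frames_proof_1,eq:bounded_geometry_partitions_frames_proof_2}, yields $\sup_{k,j}\sup_{x \in B(p_k,r)} |\nabla w_j^k|_x < \infty$.

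The only genuinely non-routine point is verifying the preservation of type under parallel transport and the corresponding bounded-geometry property of $T^{1,0}X$, both of which rest squarely on the Kähler identity $\nabla J = 0$; the bound for $\ol w_j^k$ is then automatic since conjugation is an isometry commuting with $\nabla$.
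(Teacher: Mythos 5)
Your proposal is correct and follows essentially the same route as the paper: both arguments hinge on the Kähler identity $\nabla J = 0$, which ensures that radial parallel transport preserves the $(1,0)$/$(0,1)$ type decomposition, and both obtain the uniform bound on $|\nabla w_j^k|$ from the synchronous-framing estimates of bounded geometry. The only cosmetic difference is that the paper first constructs a \emph{real} orthonormal basis $(\widetilde e_m^k)$ of $T_{p_k}X$ from the complex one, parallel transports that, and then recombines to get $w_j^k = \tfrac{1}{\sqrt 2}(\xi_{2j-1}^k - i\xi_{2j}^k)$, so that the bound on $|\nabla w_j^k|$ follows immediately from \cref{bounded_geometry_partitions_frames} by linearity; you instead parallel transport the $(1,0)$-vectors directly and re-invoke the bounded-geometry estimate \cref{eq:bounded_connection_coefficients_vector_bundle} for $E = T^{1,0}X$. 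Since parallel transport is $\Cplx$-linear on the complexified bundle, the two frames coincide, and both derivations of the bound are valid; the paper's route is marginally shorter because it avoids having to verify separately that $T^{1,0}X$ is a Hermitian bundle of $1$-bounded geometry.
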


\begin{proof}
 Choose an orthonormal basis $(e_1^k,\dotsc,e_n^k)$ of $T_{p_k}^{1,0}X$.
 Then $(\widetilde e_m^k)_{m=1}^{2n}$ defined by
 $$
 \widetilde e^k_{2j-1} \coloneqq \frac{1}{\sqrt{2}}(e^k_j + \ol e^k_j)
 \quad\text{and}\quad
 \widetilde e^k_{2j} \coloneqq J\widetilde e^k_{2j-1} = \frac{\Cplxi}{\sqrt{2}}(e^k_j - \ol e^k_j)
 \qquad(1 \leq j \leq n)
 $$
 is an orthonormal basis of $T_{p_k}X$, which we extend to an orthonormal frame $(\xi_1^k,\dotsc,\xi_{2n}^k)$ of $TX|_{B(p_k,r)}$ as in \cref{bounded_geometry_partitions_frames}.
 Since $X$ is K\"ahler, the complex structure $J$ is parallel for the Levi--Civita connection, see for instance \cite[Theorem~4.17]{Ballmann2006}.
 If $x \in B(p_k,r)$ and $\gamma$ denotes the radial geodesic from $p_k$ to $x$, then $\xi_m^k = P_\gamma(e_m^k)$ with $P_\gamma$ the parallel transport along $\gamma$, and
 therefore also $J(\xi_{2j-1}^k(x) - \Cplxi \xi_{2j}^k(x)) = \Cplxi(\xi_{2j-1}^k(x) - \Cplxi \xi_{2j}^k(x))$
 since the parallel transport commutes with the parallel endomorphism $J$.
 Hence,
 $$ w_j^k \coloneqq \frac{1}{\sqrt 2}(\xi_{2j-1}^k - \Cplxi \xi_{2j}^k) $$
 defines an orthonormal frame of $T^{1,0}X$ over $B(p_k,r)$, and with the required properties.
 The claim about $(\ol w_1^k,\dotsc, \ol w_n^k)$ is immediate.
\end{proof}

\ifams
 \printbibliography
\else
 \bibliography{resources/references}
\fi

\end{document}